\newcommand*\lesubsection[1]{\vskip4ex\noindent{\normalsize\bf#1}\addcontentsline{toc}{lesubsection}%
                            {\numberline{}\quad#1}\nopagebreak\vspace{1ex}\newline\nopagebreak}
 \newcommand*{\bal}{\begin{aligned}}
 \newcommand*{\eal}{\end{aligned}}
 \newcommand*{\uti}[1]{(#1)\space}
 \newcommand*{\qa}{,\qquad}
 \newcommand*{\qb}{,\quad}
 \newcommand*{\mf}[1]{\boldsymbol{#1}} 
 \newcommand*{\ci}{\mathaccent"7017 }     
 \newcommand*{\wheq}{\mathrel{\wh{=}}}
 \newcommand*{\hb}[1]{\hbox{$#1$}} 
 \newcommand*{\sdot}{\!\cdot\!}
 \newcommand*{\sco}{\kern2pt\colon\kern2pt}
 \newcommand*{\sn}{\kern1pt|\kern1pt}
 \newcommand*{\bsn}{\kern1pt\big|\kern1pt}
 \newcommand*{\ssm}{\!\setminus\!}
 \newcommand*{\esdot}{\hbox{$[{}\sdot{}]$}}
 \newcommand*{\eea}[1]{\hbox{$[\![#1]\!]$}}
 \newcommand*{\vsdot}{\hbox{$\vert\sdot\vert$}}
 \newcommand*{\Vsdot}{\hbox{$\Vert\sdot\Vert$}}
 \newcommand*{\hh}[1]{{\textbf{#1}}} 
 \newcommand*{\npb}{\postdisplaypenalty=10000}
 \newcommand*{\pe}{\hbox{$[{}\sdot{},{}\sdot{}]$}}
 \newcommand*{\pr}{\hbox{$(\cdot,\cdot)$}}
 \newcommand*{\prsn}{\hbox{$(\cdot\sn\cdot)$}}
 \newcommand*{\pw}{\hbox{$\dl{}\sdot{},{}\sdot{}\dr$}}
 \newcommand*{\mfA}{{\mf{A}}} 
 \newcommand*{\mfE}{{\mf{E}}} 
 \newcommand*{\mfF}{{\mf{F}}} 
 \newcommand*{\mfd}{{\mf{d}}} 
 \newcommand*{\mfu}{{\mf{u}}} 
 \newcommand*{\mfv}{{\mf{v}}} 
 \newcommand*{\mfnu}{{\mf{\nu}}} 
 \newcommand*{\mfom}{{\mf{\om}}} 
 \newcommand*{\mfpl}{{\mf{\pl}}} 
 \newcommand*{\bka}{{\bar\ka}} 
 \newcommand*{\bs}{{\bar s}} 
 \newcommand*{\dlim}[2]{\substack{{#1}\\{#2}}}
 \newcommand*{\bid}[2]{\Bigl({{#1}\atop{#2}}\Bigr)} 
 \newcommand*{\wh}{\widehat}
 \newsavebox{\Prel}
 \sbox{\Prel}{\begin{picture}(2,6)(0,0)\put(1,2.5){\circle*{2}}\end{picture}}
 \newcommand*{\btdot}{\mathrel{\usebox{\Prel}}}
 \newsavebox{\iPrel}
 \sbox{\iPrel}{\begin{picture}(2.4,6)(0,0)\put(1.2,1.5){\circle*{1.2}}\end{picture}}
 \newcommand*{\ibtdot}{\mathrel{\usebox{\iPrel}}}     
 \newsavebox{\Ptop}
 \sbox{\Ptop}{\begin{picture}(2,2)(-1,-1)\put(0,0){\circle*{2}}\end{picture}}
 \newcommand*{\thdot}[3]{\mbox{\slap{\kern#3ex\raisebox{#2ex}{\usebox{\Ptop}}}{$#1$}}}
 \newsavebox{\iPtop}
 \sbox{\iPtop}{\begin{picture}(2,2)(0,0)\put(0,0){\circle*{1.2}}\end{picture}}
 \newcommand*{\ithdot}[3]{\mbox{\slap{\kern#3ex\raisebox{#2ex}{\usebox{\iPtop}}}{$#1$}}}
 \newcommand*{\thBN}{\rlap{\thdot{\BN}{1.9}{.6}}{\ph{\BN}}}
 \newcommand*{\thsZ}{\thdot{\sZ}{1.9}{.45}}
 \newcommand*{\ph}{\phantom}
 \newcommand*{\vph}{\vphantom}
 \newcommand*{\cip}{\ci{\vph{a}}\kern.2ex}                        
 \newcommand*{\dotp}{\thdot{\vph{a}}{1.3}{0}\;}  
 \newcommand*{\idotp}{\ithdot{\vph{a}}{1}{0}\;}  
 \newcommand*{\ihati}{{\textstyle\hat{\raisebox{.5ex}{${\scriptstyle\imath}$}}}}
 \newcommand*{\iihati}{{\textstyle\hat{\raisebox{.8ex}{${\scriptscriptstyle\imath}$}}}}
 \newcommand*{\hr}{\hookrightarrow}
 \newcommand*{\ra}{\rightarrow}
 \newcommand*{\dl}{\langle}
 \newcommand*{\dr}{\rangle}
 \newcommand*{\sdh}{\stackrel{d}{\hookrightarrow}}
 \newcommand*{\doteqeta}{\underset{\mbox{\raisebox{1ex}{$\scriptstyle\eta$}}}\doteq}
 \newcommand*{\simeta}{\underset{\mbox{\raisebox{1ex}{$\scriptstyle\eta$}}}\sim}
 \newcommand*{\slap}[1]{\hspace{0pt}\hbox to 0pt{#1\hss}}
 \newcommand*{\card}{\mathop{\rm card}\nolimits}
 \newcommand*{\diag}{{\mathop{\rm diag}\nolimits}}
 \newcommand*{\dist}{\mathop{\rm dist}\nolimits}
 \newcommand*{\tdiv}{\mathop{\rm div}\nolimits}
 \newcommand*{\divgrad}{\mathop{\rm div\kern1pt grad}\nolimits}
 \newcommand*{\dom}{\mathop{\rm dom}\nolimits}
 \newcommand*{\essup}{\mathop{\mbox{\rm ess\kern1pt sup}}}
 \newcommand*{\grad}{\mathop{\rm grad}\nolimits}
 \newcommand*{\HS}{{\rm HS}}
 \newcommand*{\id}{{\rm id}}
 \newcommand*{\im}{\mathop{\rm im}\nolimits}
 \newcommand*{\tsupp}{{\rm supp}}
 \newcommand*{\unif}{{\rm unif}}
 \newcommand*{\End}{{\rm End}}
 \newcommand*{\Laut}{{\mathcal L}{\rm aut}}
 \newcommand*{\Lis}{{\mathcal L}{\rm is}}
 \newcommand*{\loc}{{\rm loc}}
 \renewcommand*{\Re}{\mathop{\text{\rm Re}}\nolimits}
 \newcommand*{\imi}{{i\kern1pt}}
 \newcommand*{\is}{\subset}
 \newcommand*{\bt}{\bullet}
 \newcommand*{\es}{\emptyset}
 \newcommand*{\iy}{\infty}
 \newcommand*{\mt}{\mapsto}
 \newcommand*{\pl}{\partial}
 \newcommand*{\pa}{\partial^\alpha}
 \newcommand*{\sh}{\sharp}
 \newcommand*{\tri}{\triangle}
 \newcommand*{\cona}{\kern-1pt}
 \newcommand*{\coU}{\kern-1pt}
 \newcommand*{\coV}{\kern-1pt}
 \newcommand*{\coW}{\kern-1pt}
 \newcommand*{\coY}{\kern-1pt}
 \newcommand*{\al}{\alpha}
 \newcommand*{\ba}{\beta}
 \newcommand*{\da}{\delta}
 \newcommand*{\Ga}{\Gamma}
 \newcommand*{\ga}{\gamma}
 \newcommand*{\mfga}{\rlap{$\mf{\ga}$}{\kern.3pt\mf{\ga}}} 
 \newcommand*{\ka}{\kappa}
 \newcommand*{\tk}{{\tilde\ka}}
 \newcommand*{\tkk}{{\tilde{\ka}\ka}}
 \newcommand*{\ktk}{{\ka\tilde{\ka}}}
 \newcommand*{\Lda}{\Lambda}
 \newcommand*{\lda}{\lambda}
 \newcommand*{\na}{\nabla}
 \newcommand*{\Om}{\Omega}
 \newcommand*{\om}{\omega}
 \newcommand*{\pO}{{\pl\Omega}}
 \newcommand*{\sa}{\sigma}
 \newcommand*{\ta}{\theta}
 \newcommand*{\za}{\zeta}
 \newcommand*{\ve}{\varepsilon}
 \newcommand*{\vp}{\varphi}
 \newcommand*{\BgB}{(B,g_B)} 
 \newcommand*{\EE}{(E,E)}
 \newcommand*{\EeEn}{(E_1,E_0)}
 \newcommand*{\EeEz}{(E_1,E_2)}
 \newcommand*{\EnEe}{(E_0,E_1)}
 \newcommand*{\BHE}{(\BH,E)}   
 \newcommand*{\cBHE}{(\ci\BH,E)} 
 \newcommand*{\KmKn}{(\BK^m,\BK^n)}
 \newcommand*{\Knm}{\BK^{n\times m}}
 \newcommand*{\Mg}{(M,g)}
 \newcommand*{\MBR}{(M,\BR)}
 \newcommand*{\MW}{(M,W)}
 \newcommand*{\QmE}{(Q^m,E)}
 \newcommand*{\Qmgm}{(Q^m,g_m)}
 \newcommand*{\Rdgd}{(\BR^d,g_d)} 
 \newcommand*{\Rdegde}{(\BR^{d+1},g_{d+1})} 
 \newcommand*{\gRgE}{(\gR,\gE)}
 \newcommand*{\gRcgE}{(\gR,\ci\gE)}
 \newcommand*{\cgRcgE}{(\ci\gR,\ci\gE)}
 \newcommand*{\RhdE}{(\BR^d,E)}
 \newcommand*{\RhdmeE}{(\BR^{d-1},E)}
 \newcommand*{\RmF}{(\BR^m,F)}
 \newcommand*{\Rmgm}{(\BR^m,g_m)}
 \newcommand*{\Rm}{(\BR^m)}
 \newcommand*{\RmE}{(\BR^m,E)}
 \newcommand*{\RnE}{(\BR^n,E)}
 \newcommand*{\RRc}{(R,R^c)}
 \newcommand*{\cRcRc}{(\cR,\cR^c)}
 \newcommand*{\BXE}{(\BX,E)}
 \newcommand*{\BXEe}{(\BX,E_1)}
 \newcommand*{\BXEn}{(\BX,E_0)}
 \newcommand*{\BXEz}{(\BX,E_2)}
 \newcommand*{\iBXhi}{\BX_\iihati}
 \newcommand*{\BXhiE}{(\BX_\ihati,E)}
 \newcommand*{\XnXe}{(X_0,X_1)}
 \newcommand*{\XY}{(X,Y)}
 \newcommand*{\sZE}{(\sZ,E)}
 \newcommand*{\thsZE}{(\thsZ,E)}
 \newcommand*{\sZEn}{(\sZ,E_0)}
 \newcommand*{\sZEe}{(\sZ,E_1)}
 \newcommand*{\sZEz}{(\sZ,E_2)}
 \newcommand*{\thsZX}{(\thsZ,X)}
 \newcommand*{\BUC}{BU\kern-.3ex C}
 \newcommand*{\LCM}{\textrm{LCM}}  
 \newcommand*{\BB}{{\mathbb B}}
 \newcommand*{\BC}{{\mathbb C}}
 \newcommand*{\BE}{{\mathbb E}}
 \newcommand*{\BH}{{\mathbb H}}
 \newcommand*{\BJ}{{\mathbb J}}
 \newcommand*{\BK}{{\mathbb K}}
 \newcommand*{\BN}{{\mathbb N}}
 \newcommand*{\BR}{{\mathbb R}}
 \newcommand*{\BW}{{\mathbb W}}
 \newcommand*{\BX}{{\mathbb X}}
 \newcommand*{\BY}{{\mathbb Y}}
 \newcommand*{\BZ}{{\mathbb Z}}
 \newcommand*{\cA}{{\mathcal A}}
 \newcommand*{\cD}{{\mathcal D}}
 \newcommand*{\cF}{{\mathcal F}}
 \newcommand*{\cH}{{\mathcal H}}
 \newcommand*{\cL}{{\mathcal L}}
 \newcommand*{\cM}{{\mathcal M}}
 \newcommand*{\cR}{{\mathcal R}}
 \newcommand*{\cS}{{\mathcal S}}
 \newcommand*{\cT}{{\mathcal T}}
 \newcommand*{\cW}{{\mathcal W}}
 \newcommand*{\cX}{{\mathcal X}}
 \newcommand*{\cY}{{\mathcal Y}}
 \newcommand*{\cZ}{{\mathcal Z}}
 \newcommand*{\gE}{{\mathfrak E}}
 \newcommand*{\gF}{{\mathfrak F}}
 \newcommand*{\gK}{{\mathfrak K}}
 \newcommand*{\gN}{{\mathfrak N}}
 \newcommand*{\gR}{{\mathfrak R}}
 \newcommand*{\gss}{{\mathfrak s}}
 \newcommand*{\sA}{{\mathsf A}}
 \newcommand*{\sfa}{{\mathsf a}}
 \newcommand*{\sB}{{\mathsf B}}
 \newcommand*{\sC}{{\mathsf C}}
 \newcommand*{\sfm}{{\mathsf m}}
 \newcommand*{\sZ}{{\mathsf Z}}
 \newif\ifinany@
 \def\column@plus{%
    \global\advance\column@\@ne
 }
 \def\add@amps#1{%
    \begingroup
        \count@#1
        \DN@{}%
        \loop
            \ifnum\count@>\column@
                \edef\next@{&\next@}%
                \advance\count@\m@ne
        \repeat
    \@xp\endgroup
    \next@
 }
 \def\Let@{\let\\\math@cr}
 \def\restore@math@cr{\def\math@cr@@@{\cr}}
 \def\default@tag{\let\tag\dft@tag}
 \newbox\strutbox@
 \def\strut@{\copy\strutbox@}
 \addto@hook\every@math@size{%
  \global\setbox\strutbox@\hbox{\lower.5\normallineskiplimit
         \vbox{\kern-\normallineskiplimit\copy\strutbox}}}
 \renewcommand{\start@aligned}[2]{%
    \RIfM@\else
        \nonmatherr@{\begin{\@currenvir}}%
    \fi
    \null\,%
    \if #1t\vtop \else \if#1b \vbox \else \vcenter \fi \fi \bgroup
        \maxfields@#2\relax
        \ifnum\maxfields@>\m@ne
            \multiply\maxfields@\tw@
            \let\math@cr@@@\math@cr@@@alignedat
        \else
            \restore@math@cr
        \fi
        \Let@
        \default@tag
        \ifinany@\else\openup\jot\fi
        \column@\z@
        \ialign\bgroup
           &\column@plus
            \hfil
            \strut@
            $\m@th\displaystyle{##}$%
           &\column@plus
            $\m@th\displaystyle{{}##}$%
            \hfil
            \crcr
 }
 \renewenvironment{aligned}[1][c]{%
    \start@aligned{#1}\m@ne
 }{%
    \crcr\egroup\egroup
 }
 \def\cprime{$'$}
\newcommand*{\extraindent}{${}$\kern10pt}       
\newcommand*{\extraroman}{\rm }                 
 \newtheorem{thm}{Theorem}[section] 
 \newtheorem{cor}[thm]{\qquad Corollary}
 \newtheorem{lem}[thm]{\qquad Lemma}
 \newtheorem{pro}[thm]{\qquad Proposition}
 \newtheorem{thm(A)}[thm]{\qquad Theorem}       
 \newtheorem{ex(A)}[thm]{\qquad Example}        
 \newtheorem{rem(A)}[thm]{\qquad Remark}        
 \theoremstyle{remark}
 \newtheorem*{proofTheoremI.P}{\quad\rm{\bfseries Proof of  
 Theorem~\@newref{thm-I.P}}}                                
 \newtheorem*{proofTheoremI.S}{\quad\rm{\bfseries Proof of  
 Theorem~\@newref{thm-I.S}}}                                
 \numberwithin{equation}{section}
\begin{document} 
\let\oldvec\vec
%
%
%
%
%
%
%
%
%

\title[Cauchy Problems for Parabolic Equations]
 {Cauchy Problems for Parabolic Equations  
 in Sobolev--Slobodeckii and H\"older Spaces on  
 Uniformly Regular Riemannian Manifolds}

\author[Herbert Amann]{Herbert Amann}

\address{%
Math.\ Institut\\ 
Universit\"at Z\"urich\\
Winterthurerstr.~190\\ 
CH 8057 Z\"urich\\ 
Switzerland}
\email{\sf herbert.amann@math.uzh.ch}
\subjclass{35K51, 35K52, 58J99}
\keywords{Parabolic initial value problems, noncompact manifolds, maximal 
regularity, Sobolev--Slobodeckii spaces, H\"older spaces}
\date{January 1, 2004}
\dedicatory{Dedicated to Professor Jan Pr\"u\ss\ on the occasion 
of his retirement}

\begin{abstract}
In this paper we establish optimal solvability results~---~maximal regularity 
theorems~---~for the Cauchy problem for linear parabolic differential 
equations of arbitrary order acting on sections of tensor bundles over 
boundaryless complete Riemannian manifolds~$\Mg$ with bounded geometry. 
We employ an anisotropic extension of the Fourier multiplier theorem for 
arbitrary Besov spaces introduced in~\cite{Ama97b}. This allows for a 
unified treatment of Sobolev--Slobodeckii and little H\"older spaces. 
In the flat case 
\hb{\Mg=(\BR^m,|dx|^2)} we recover classical results for 
Petrowskii-parabolic Cauchy problems. 
\end{abstract}

\maketitle
\section{Introduction\label{sec-I}} 
\extraindent 
It is well-known that parabolic differential equations play an important 
role in mathematics as well as in more applied sciences, like physics, 
chemistry, biology, etc. As a rule, sophisticated and complex 
environments are modeled by (systems of) quasilinear or even fully 
nonlinear equations. A~particularly interesting and important class of 
nonlinear equations occurring inside mathematics is related to heat flow 
methods in differential geometry. In such and many other intricate settings 
even local well-posedness is far from being easily established, if known at 
all. 
 
\smallskip 
In geometry in particular, it is often convenient, or even necessary, to deal 
with classes of functions possessing relatively high regularity properties. 
Moreover, it is frequently easier and more appropriate to handle functions 
which are differentiable in the usual point-wise rather than the generalized 
sense of distributions. 
 
\smallskip 
It is a pivotal step in the study of nonlinear parabolic equations to 
establish maximal regularity results for linear equations. With the help of 
such tools it is then relatively straightforward to prove the local 
well-posedness of nonlinear problems by more or less standard linearization 
techniques. 

\smallskip 
This paper contains maximal regularity results 
in Sobolev--Slobodeckii and H\"older spaces of arbitrary order for 
linear parabolic equations acting on sections of tensor bundles over 
a vast class of, generally noncompact, 
Riemannian manifolds. We employ a Fourier-analytic approach which allows 
for a unified treatment of all these function space settings at one stroke.  
In order not to overburden this already long paper, we restrict 
ourselves to manifolds without boundary. Boundary value problems will be 
treated elsewhere. 

\smallskip 
For the presentation of our results we need 
some~---~rather lengthy~---~preparation on concepts and definitions. 
We begin by fixing basic syntax. 

\smallskip 
Let $E$, $E_1$,~$E_2$ be Banach spaces over 
\hb{\BK=\BR} or 
\hb{\BK=\BC}. Then $\cL\EeEz$ is the Banach space of the continuous 
linear maps from~$E_1$ into~$E_2$ endowed with the uniform operator norm, 
and 
\hb{\cL(E):=\cL\EE}. By $\Lis\EeEz$ we mean the open subset of $\cL\EeEz$ 
of all isomorphisms therein, and 
\hb{\Laut(E):=\Lis\EE}. We write 
\hb{\prsn} and 
\hb{\vsdot} for the Euclidean inner product and norm, respectively, 
on~$\BK^n$. We identify 
\hb{a\in\cL\KmKn} with its matrix representation 
\hb{[a^{ij}]\in\Knm} with respect to the standard bases of 
$\BK^m$ and~$\BK^n$, if no confusion seems likely. We endow $\Knm$ with the 
Hilbert--Schmidt norm which means that the identification 
\hb{\Knm=\cL\KmKn} applies. 
\lesubsection{Tensor Bundles} 
\extraindent 
Next we collect the needed facts on tensor bundles and refer to~\cite{Ama12b} 
or, of course, to~\cite{Die69bIII} for more details and explanations. 

\smallskip 
Throughout this paper: 
$$ 
\bal 
\bt\quad 
&\Mg\text{ is a smooth $m$-dimensional Riemannian manifold}\cr 
\noalign{\vskip-1\jot} 
&\text{with or without boundary}.\cr 
\bt\quad 
&F=\bigl(F,\prsn_F\bigr)\text{ is an $n$-dimensional complex inner product 
 space},\cr 
\noalign{\vskip-1\jot} 
&\text{ where }n\in\BN.
\eal 
\npb 
$$ 
If 
\hb{n=0}, then 
\hb{F:=\{0\}} and obvious identifications apply in the following. 

\smallskip 
As usual, $TM$~denotes the tangent and $T^*M$ the cotangent bundle, and 
\begin{equation}\label{I.du} 
\pw\sco T^*M\times TM\ra C^\iy\MBR 
\end{equation} 
the (fibre-wise defined) duality pairing. We always suppose 
$$ 
\bt\quad 
\sa,\tau\in\BN.
$$ 
Then 
\hb{T_\tau^\sa M:=TM^{\otimes\sa}\otimes T^*M^{\otimes\tau}} is the 
$(\sa,\tau)$-tensor bundle over~$M$ consisting of tensors being contravariant 
of order~$\sa$ and covariant of order~$\tau$. In particular, 
\hb{T_0^1M=TM}, 
\ \hb{T_1^0M=T^*M}, and 
\hb{T_0^0M=M\times\BR}, a~trivial line bundle. The covariant metric induced 
by~$g$ on~$T^*M$ is written~$g^*$. We endow~$T_\tau^\sa M$ with the bundle 
metric 
\hb{\prsn_\tau^\sa:=g^{\otimes\sa}\otimes g^{*\otimes\tau}} and the 
corresponding bundle norm 
\hb{\vsdot_\tau^\sa:=\bigl(u\mt\sqrt{(u\sn u)_\tau^\sa}\,\bigr)}. 

\smallskip 
The vector bundle of \hbox{$F$-valued} $(\sa,\tau)$-tensors, 
\hb{T_\tau^\sa M\otimes F}, is defined by 
$$ 
\dl a\otimes f,b\dr:=f\dl a,b\dr 
\qa a\in T_\tau^\sa M 
\qb f\in F 
\qb b\in T_\sa^\tau M. 
$$ 
Here we use the fact that 
\hb{T_\tau^\sa M=(T_\sa^\tau M)'} with respect to the duality pairing~%
\hb{\pw} induced by~\eqref{I.du}. We endow 
\hb{T_\tau^\sa M\otimes F} with the bundle metric 
$$ 
\prsn_{\tau,F}^\sa:=\prsn_\tau^\sa\otimes\prsn_F 
=\prsn_\tau^\sa\prsn_F 
$$ 
and set 
$$ 
\bt\quad 
V=V_{\coV\tau}^\sa=V_{\coV\tau}^\sa(F) 
:=\bigl(T_\tau^\sa M\otimes F,\ \prsn_{\tau,F}^\sa\bigr). 
$$ 
In particular, 
$$ 
V_0^0(F)=(M\times R)\otimes F\wheq M\times F, 
$$ 
a~trivial complex vector bundle of rank~$n$ over$M$ if 
\hb{n\geq1}, and, if 
\hb{n=0}, then  
\hb{V_0^0(F)\wheq M\times\BR}. Here and below, 
\hb{{}\wheq{}}~means `natural identification'. 

\smallskip 
Let 
\hb{W=\bigl(W,\prsn_W\bigr)} be any smooth metric vector bundle over~$M$. 
Then $W_{\coW p}$~is its fiber over 
\hb{p\in M} and 
\hb{\Ga(W)=\Ga\MW} is the \hbox{$\BR$-vector} space of all sections of~$W$ 
(no topology). By~$C^k(W)$, 
\ \hb{k\in\BN\cup\{\iy\}}, we mean the $C^k\MBR$-module of all 
$C^k$~sections, and 
\hb{C^0=C}. 

\smallskip 
We denote by~$dv_g$ the Riemann--Lebesgue volume measure on~$M$. Then 
$L_q(W)$~is, for 
\hb{1\leq q\leq\iy}, the Banach space of all (equivalence classes of) 
\hbox{$dv_g$-measurable} sections~$u$ of~$W$ for which the norm 
$$ 
\|u\|_q 
:=\Bigl(\int_W|u|_W^q\,dv_g\Bigr)^{1/q} 
$$ 
if 
\hb{q<\iy}, respectively 
\hb{\|u\|_\iy:=\essup_W|u|_W} if 
\hb{q=\iy}, is finite. 

\smallskip 
Assume 
\hb{(x^1,\ldots,x^m)} is a coordinate system on some open coordinate 
patch $U$ of~$M$. We set 
$$ 
\frac\pl{\pl x^{(i)}} 
:=\frac\pl{\pl x^{i_1}}\otimes\cdots\otimes\frac\pl{\pl x^{i_r}} 
\qb dx^{(i)}:=dx^{i_1}\otimes\cdots\otimes dx^{i_r} 
$$ 
for 
\hb{(i):=(i_1,\ldots,i_r)\in\BJ_r:=\{1,\ldots,m\}^r}. Then 
\begin{equation}\label{I.b} 
\frac\pl{\pl x^{(i)}}\otimes dx^{(j)} 
\qa (i)\in\BJ_\sa 
\qb (j)\in\BJ_\tau, 
\end{equation} 
is a coordinate frame for~$T_\tau^\sa M$ over~$U$. We use the summation 
convention with $(i)$ and~$(j)$ running through $\BJ_\sa$ and~$\BJ_\tau$, 
respectively, Then 
\hb{a\in V} has on~$U$ the local representation 
\begin{equation}\label{I.aU} 
a\sn U=a_{(j)}^{(i)}\frac\pl{\pl x^{(i)}}\otimes dx^{(j)} 
\qa a_{(j)}^{(i)}\in F^U, 
\npb 
\end{equation} 
where $F$~has to be replaced by~$\BR$ if 
\hb{n=0}. 

\smallskip 
Given 
\hb{a\in\Ga\bigl(V_{\tau+\sa}^{\sa+\tau+\rho}(\cL(F))\bigr)} and 
\hb{b\in\Ga(V_{\tau+\rho}^\sa)} with 
\hb{\rho\in\BN}, we define the 
\emph{complete\footnote{`Complete' means that we contract over a 
maximal number of indices.} contraction} 
\hb{a\btdot b} locally by 
$$ 
(a\btdot b)_{(j)}^{(i)} 
:=a_{(j)(r)}^{(i)(s)}\,b_{(s)}^{(r)} 
$$ 
with $(i)$ and~$(r)$ running through~$\BJ_\sa$, 
\,$(j)$~through~$\BJ_\tau$, and $s$ through~$\BJ_{\tau+\rho}$,~and where 
\hb{(i)(s):=(i_1,\ldots,i_\sa,s_1,\ldots,s_{\tau+\rho})}, etc. Then 
$$ 
\Ga\bigl(V_{\tau+\sa}^{\sa+\tau+\rho}(\cL(F))\bigr) 
\times\Ga(V_{\tau+\rho}^\sa) 
\ra\Ga(V) 
\qb (a,b)\mt a\btdot b 
$$ 
is a bilinear vector bundle map which is continuous in the sense that 
\begin{equation}\label{I.ab} 
|a\btdot b|_V 
\leq|a|_{V_{\tau+\sa}^{\sa+\tau+\rho}(\cL(F))}\,|b|_{V_{\tau+\rho}^\sa}. 
\end{equation} 
We also need to use the complexification~$V_\BC$ of~$V$, defined by 
$$ 
V_\BC:=(T_\tau^\sa M\otimes\BC)\otimes F 
=T_\tau^\sa M\otimes F+\imi T_\tau^\sa M\otimes F, 
\npb 
$$ 
and continue to write 
\hb{a\btdot{}}~for the complexification~%
\hb{(a\btdot{})_\BC} of~%
\hb{a\btdot{}}. 

\smallskip 
For abbreviation, 
\hb{\cT M:=C^\iy(TM)}, the $C^\iy\MBR$-module of smooth vector fields on~$M$. 
Then 
\hb{\na=\na_{\cona g}} denotes the Levi--Civita connection on~$\cT M$. 
The same symbol is used for its extension over~$C^1(T_\tau^\sa M)$, 
considered as an \hbox{$\BR$-linear} map 
$$ 
\na\sco C^1(T_\tau^\sa M)\ra C(T_{\tau+1}^\sa M) 
\qb v\mt\na v, 
$$ 
where 
\hb{\na=d}, the differential, on~$C^1\MBR$ if 
\hb{\sa=\tau=0}. For 
\hb{k\in\BN} we set 
\hb{\na^{k+1}:=\na\circ\na^k}  with 
\hb{\na^0:=\id}, and 
\hb{\na(v\otimes f):=\na v\otimes f} for 
\hb{v\otimes f\in C(V)}. Then $\na^k$~is an \hbox{$\BR$-linear} map 
$$ 
\na^k\in C^k(V)\ra C(V_{\tau+k}^\sa) 
\qb u\mt\na^ku. 
$$ 
Note that the \hbox{$\BR$-linearity} means `real 
differentiation', although $u$~is complex-valued (see~\eqref{I.aU}). 
\lesubsection{Normally Elliptic Operators} 
\extraindent 
Now we are ready to introduce differential operators. We write 
\hb{\thBN:=\BN\ssm\{0\}} and assume 
\begin{equation}\label{I.r} 
\bt\quad 
r\in 2\thBN. 
\end{equation} 
Let 
\hb{a_j\in C\bigl(V_{\tau+\sa}^{\sa+\tau+j}(\cL(F))\bigr)} for 
\hb{0\leq j\leq r}. We consider the linear differential operator 
\begin{equation}\label{I.A} 
\cA:=\sum_{j=0}^ra_j\btdot\na^j  
\end{equation} 
acting on 
\hb{u\in C^r(V)} by 
\hb{(a_j\btdot\na^j)u:=a_j\btdot(\na^ju)}. With~$\cA$ we associate its 
\emph{principal symbol}~$\gss\cA$ defined by 
$$ 
\gss\cA(\cdot,\xi):=(-1)^{r/2}(a_r\btdot\xi^{\otimes r})\btdot{} 
\qa \xi\in\Ga(T^*M). 
$$  
Note that 
\hb{\gss\cA(\cdot,\xi)\in\Ga\bigl(\End(V_\BC)\bigr)}, the map 
\hb{\xi\mt\gss\cA(\cdot,\xi)} is \hbox{$r$-linear}, and 
$$ 
|\gss\cA(\cdot,\xi)|_{\Ga(\End(V_\BC))} 
\leq|a_r|_{V_{\tau+\sa}^{\sa+\tau+r}(\cL(F))}(|\xi|_1^0)^r 
\npb 
$$ 
for 
\hb{\xi\in\Ga(T^*M)}, by~\eqref{I.ab}. 

\smallskip 
We denote by~$\sa(A)$ the spectrum of a linear operator~$A$ in a given 
complex Banach space and write 
\hb{[\Re z\geq\ve]:=\{\,z\in\BC\ ;\ \Re z\geq\ve\,\}},~etc. 

\smallskip 
Let 
\hb{0<\ve\leq1}. Then $\cA$~is \hh{uniformly normally} 
\hbox{$\ve$\hh{-elliptic}} \hh{on}~$\Mg$ if 
\begin{equation}\label{I.ne} 
\sa\bigl(\gss\cA(p,\xi)\bigr)\is[\Re z\geq\ve] 
\end{equation} 
for each 
\hb{p\in M} and 
\hb{\xi\in T_p^*M} with 
\hb{|\xi|_1^0=1}. It is \hh{uniformly normally elliptic} if 
\eqref{I.ne} holds for some 
\hb{\ve\in(0,1]}. 
\begin{rem(A)}\label{rem-I.ne(a)} 
\extraroman 
It is no restriction of generality to assume \eqref{I.r}. Indeed, 
if $r$~is odd, then  
\hb{\sa\bigl((a_r(p)\btdot(-\xi)^{\otimes r})\btdot\bigr) 
   =-\sa\bigl((a_r(p)\btdot\xi^{\otimes r})\btdot\bigr)}. 
Thus the spectrum of 
\hb{(a_r(p)\btdot\xi^{\otimes r})\btdot{}} cannot be contained 
in one and the same half-space of~$\BC$ for all 
\hb{\xi\in T_p^*M} with 
\hb{|\xi|_1^0=1}.\nolinebreak\hfill\nolinebreak\qed 
\end{rem(A)}  
\begin{rem(A)}\label{rem-I.ne(b)} 
\extraroman 
$\cA$~is called \hh{uniformly strongly 
\hbox{$\ve$-elliptic}}
\ if 
$$ 
\Re\bigl(\gss\cA(\cdot,\xi)\eta\bsn\eta\bigr)_{V_\BC} 
\geq\ve(|\xi|_1^0)^r\,|\eta|_{V_\BC}^2 
\qa \xi\in\Ga(T^*M) 
\qb \eta\in\Ga(V_\BC). 
$$ 
It is obvious that this condition implies the uniform normal 
\hbox{$\ve$-ellipticity} 
of~$\cA$.\nolinebreak\hfill\nolinebreak\qed 
\end{rem(A)}  
\begin{rem(A)}\label{rem-I.ne(c)} 
\extraroman 
If  
\hb{\sa=\tau=n=0}, then 
\hb{V=M\times\BR} and 
\hb{\Ga(V)=\BR^M}. It follows that $\cA$~is uniformly normally 
\hbox{[$\ve$-]elliptic} iff it is uniformly strongly 
\hbox{[$\ve$-]elliptic}. In this case, as usual, $\cA$~is simply called 
\hh{uniformly} \hbox{[$\ve$\hh{-}]\hh{elliptic}}.%
\nolinebreak\hfill\nolinebreak\qed 
\end{rem(A)}  
\begin{rem(A)}\label{rem-I.ne(d)} 
\extraroman 
Assume 
\hb{\Mg=\Rmgm}, where $g_m$~is  the Euclidean metric 
\hb{|dx^2|:=(dx^1)^2+\cdots+(dx^m)^2}. If 
\hb{\sa=\tau=0} and 
\hb{n\geq1}, then 
\hb{V=\BR^m\times F}. We set 
$$ 
D:=-\imi\pl=-\imi(\pl_1,\ldots,\pl_m) 
=-\imi(\pl/\pl x^1,\ldots,\pl/\pl x^m) 
$$  
and use standard multiindex notation. Then we can write~$\cA$ in the form 
\begin{equation}\label{I.AD} 
\cA=\sum_{|\al|\leq r}a_\al D^\al 
\qa a_\al\in C\bigl(\BR^m,\cL(F)\bigr), 
\end{equation} 
and 
\hb{\gss\cA(\cdot,\xi)=\sum_{|\al|=r}a_\al\xi^\al} for 
\hb{\xi\in\BR^m}. Note that the top-order coefficients are real. 
\end{rem(A)}  
\begin{proof} 
This follows from 
\hb{\na_{\cona g_m}=\pl}, the latter being identified with the Fr\'echet 
derivative. 
\end{proof} 
\begin{ex(A)}\label{exa-I.ne(e)} 
\extraroman 
We denote by 
$$ 
g^\sh\sco\Ga(T_{\tau+1}^\sa M)\ra\Ga(T_\tau^{\sa+1}) 
\qb a\mt g^\sh a=:a^\sh 
$$ 
the `index rising' bundle isomorphism defined by 
\hb{(g^\sh\om\sn X)_0^1=\dl\om,X\dr} for 
$\om$~in $\Ga(T^*M)$ and $X$~in $\Ga(TM)$. We write 
$$ 
\sC\sco\Ga(T_{\tau+1}^{\sa+1}M)\ra\Ga(T_\tau^\sa M) 
\qb a\mt\sC a 
$$ 
for the contraction, locally defined by 
\hb{(\sC a)_{(j)}^{(i)}:=a_{(j)(k)}^{(i)(k)}}, with $(i)$~running 
through~$\BJ_\sa$, 
\,$(j)$~through~$\BJ_\tau$, and $k$ through~$\BJ_1$. Then 
$$ 
\tdiv:=\tdiv_g\sco C^1(T_\tau^{\sa+1}M)\ra C(T_\tau^\sa) 
\qb a\mt\tdiv a:=\sC(\na a) 
\npb 
$$ 
is the \emph{divergence} of $C^1$~tensor fields of type 
\hb{(\sa+1,\tau)}. 

\smallskip 
The \emph{gradient}, 
\hb{\grad u=\grad_gu}, of 
\hb{u\in C^1(M)} is given by~$g^\sh\,du$. Thus, if 
\hb{u\in C^1(M)} and 
\hb{a\in C^1(T_1^1M)}, 
$$ 
\tdiv(a\btdot\grad u)=a^\sh\btdot\na^2u+\tdiv(a^\sh)\btdot\na u. 
$$ 
In local coordinates 
$$ 
\tdiv(a\grad u)\sn U 
=\frac1{\sqrt g}\,\frac\pl{\pl x^i}\Bigl(\sqrt{g}\,g^{ij} 
\,\frac{\pl u}{\pl x^j}\Bigr),  
$$ 
$[g^{ij}]$~being the inverse of the fundamental matrix, and 
\hb{\sqrt{g}:=\bigl(\det[g_{ij}]\bigr)^{1/2}}. In particular, 
\hb{\tri=\tri_g:=\divgrad} is the Laplace--Beltrami operator of~$\Mg$. 

\smallskip 
Suppose 
\hb{\sa=\tau=n=0}. Then 
\begin{equation}\label{I.Aa} 
\cA:=-\tdiv(a\grad\cdot{})
\end{equation} 
is uniformly \hbox{$\ve$-elliptic} iff 
$$ 
a^\sh\btdot\xi\otimes\xi=\dl\xi,a^\sh\xi\dr 
\geq\ve(|\xi|_1^0)^2 
\qa \xi\in\Ga(T^*M). 
$$ 
In local coordinates this means 
$$ 
g^{ik}a_k^j\xi_i\xi_j\geq\ve g^{ij}\xi_i\xi_j 
\qa \xi=\xi_i\,dx^i. 
\npb 
$$ 
In particular, $-\tri$~is uniformly 
\hbox{$1$-elliptic}.\nolinebreak\hfill\nolinebreak\qed 
\end{ex(A)}  
\begin{ex(A)}\label{exa-I.ne(f)} 
\extraroman 
The \emph{covariant Laplacian} (or \emph{Bochner Laplacian}) is defined 
by~$\na^*\na$, where $\na^*$~is the formal adjoint of 
\hb{\na\sco C^\iy(V)\ra C^\iy(V_{\coV\tau+1}^\sa)} with respect to the 
$L_2(V_{\coV\tau+1}^\sa)$ inner product. It is known 
(e.g.,~\cite[Appendix~C, Proposition~2.1]{Tay11a}) that 
\hb{\na^*\na=-g^*\btdot\na^2}. Hence 
\hb{\gss\na^*\na(\cdot,\xi)=(|\xi|_1^0)^2} for 
\hb{\xi\in\Ga(T^*M)}. Thus 
\hb{\na^*\na\sco C^2(V)\ra C(V)} is uniformly normally 
\hbox{$1$-elliptic}.\nolinebreak\hfill\nolinebreak\qed 
\end{ex(A)}  
\begin{ex(A)}\label{exa-I.ne(g)} 
\extraroman 
For 
\hb{0\leq k\leq m} let 
\hb{\bigwedge^k:=\bigl(\bigwedge^kT^*M,\prsn_k^0\bigr)} be the 
\hbox{$k$-fold} exterior product of~$T^*M$, considered as a subbundle 
of~$V_k^0$. Then the \emph{Hodge Laplacian} 
$$ 
d\da+\da d\sco C^2({\textstyle\bigwedge^k})\ra C({\textstyle\bigwedge^k}) 
\npb 
$$ 
is uniformly normally \hbox{$1$-elliptic} 
(e.g.,~\cite[Example~10.1.22]{Nico07a} 
and~\cite{Ama12b}).\nolinebreak\hfill\nolinebreak\qed 
\end{ex(A)}
\lesubsection{Uniformly Regular Riemannian Manifolds} 
\extraindent 
In order to proceed further we have to assume that $\Mg$~is a uniformly 
regular Riemannian manifold. The precise definition of this concept, 
which has been introduced in~\cite{Ama12b}, is given in Section~\ref{sec-L}. 
Here we content ourselves with a list of examples which indicates the 
extent of this class. If there is no reference given, proofs are found 
in~\cite{Ama15a}. 
\begin{ex(A)}\label{exa-I.ex(a)} 
\extraroman 
$\Rmgm$ and 
\hb{(\BR^m\times\BR^+,g_{m+1})} are uniformly 
regular.\nolinebreak\hfill\nolinebreak\qed 
\end{ex(A)} 
\begin{ex(A)}\label{exa-I.ex(b)} 
\extraroman 
Every compact manifold is uniformly regular (with respect to any 
metric~$g$).\nolinebreak\hfill\nolinebreak\qed 
\end{ex(A)} 
\begin{ex(A)}\label{exa-I.ex(c)} 
\extraroman 
Products of uniformly regular Riemannian manifolds are uniformly 
regular.\nolinebreak\hfill\nolinebreak\qed 
\end{ex(A)} 
\begin{ex(A)}\label{exa-I.ex(d)} 
\extraroman 
Isometric images of uniformly regular Riemannian manifolds are uniformly 
regular.\nolinebreak\hfill\nolinebreak\qed 
\end{ex(A)} 
\begin{ex(A)}\label{exa-I.ex(e)} 
\extraroman 
\uti{Manifolds with tame ends} 
Let $\BgB$ be an 
\hb{(m-1)}-dimensional compact Riemannian submanifold of~$\Rdgd$, 
\,\hb{d\geq m}, without boundary. Suppose 
\hb{0\leq\al\leq1}. Set 
$$ 
F_\al(B):= 
\bigl\{\,(t,t^\al y)\ ;\ t>1,\ y\in B\,\bigr\} 
\is\BR\times\BR^d=\BR^{d+1}. 
$$ 
Then $F_0(B)$~is an infinite cylinder with base~$B$, and $F_1(B)$~is a 
(blunt) cone over~$B$. We endow~$F_\al(B)$ with the Riemannian 
metric~$g_{F_\al(B)}$ induced by its embedding into $\Rdegde$. Assume 
\hb{M=V_0\cup V_1}, where $V_0$ and~$V_1$ are open, $V_0$ and 
\hb{V_0\cap V_1} are relatively compact, and $(V_1,g)$~is isometric to 
$\bigl(F_\al(B),g_{F_\al(B)}\bigr)$. Then $V_1$~is a \emph{tame end} of~$M$. 
Any Riemannian manifold with finitely many pair-wise disjoint tame ends is 
uniformly regular. In particular, manifolds with cylindrical or `infinite' 
conical ends are uniformly 
regular.\nolinebreak\hfill\nolinebreak\qed 
\end{ex(A)} 
\begin{ex(A)}\label{exa-I.ex(f)} 
\extraroman 
\uti{Manifolds with cuspidal singularities} 
Let $(\Om,\tilde g)$ be a Riemannian manifold with 
nonempty compact boundary~$\pO$. Suppose 
\hb{\ba\geq1}. Let $\ci\Om$ be the interior of~$\Om$. Fix 
\hb{\rho\in C^\iy\bigl(\ci\Om,(0,1]\bigr)} with 
\hb{\rho(x)=\bigl(\dist_\Om(x,\pO)\bigr)^\ba} for $x$ in some sufficiently 
small neighborhood of~$\pO$. Set 
\hb{\Mg:=(\ci\Om,\tilde g/\rho^2)}. Then $\Mg$~is uniformly regular. 

\smallskip 
As an example we see that the Poincar\'e model of the hyperbolic 
\hbox{$m$-space}, 
\hb{\bigl(\BB^m,4\,dx^2/(1-|x|^2)^2\bigr)}, where $\BB^m$~is the open 
unit ball in~$\BR^m$, is a uniformly regular Riemannian manifold. 
\nolinebreak\hfill\nolinebreak\qed 
\end{ex(A)} 
\begin{ex(A)}\label{exa-I.ex(g)} 
\extraroman 
If 
\hb{\pl M=\es}, then $\Mg$~is uniformly regular iff it has bounded geometry. 
By this we mean that it is geodesically complete, has a positive 
injectivity radius, and all covariant derivatives of the curvature tensor 
are bounded. 
\end{ex(A)} 
\begin{proof} 
The necessity part is Theorem~4.1 in~\cite{Ama15a}. The sufficiency statement 
has been shown by D.~Disconzi, Y.~Shao, and G.~Simonett~\cite{DSS16a}. 
\end{proof} 
\begin{rem(A)}\label{rem-I.S}  
\extraroman 
Under the conditions of Example~\ref{exa-I.ex(f)}, 
\ $(\ci\Om,\tilde g)$~is an instance of~a \emph{singular manifold} as 
introduced in~\cite{Ama12b}. If $\cA$~is a uniformly normally elliptic 
differential operator on 
\hb{\Mg:=(\ci\Om,\tilde g/\rho^2)}, then, considered as a differential 
operator on~$(\ci\Om,\tilde g)$, its coefficients degenerate near the 
boundary~$\pO$ (cf.~\cite{Ama16a} for a discussion of this aspect 
in the case of second order scalar 
equations).\nolinebreak\hfill\nolinebreak\qed 
\end{rem(A)} 
\lesubsection{Function Spaces} 
\extraindent 
It has been shown in~\cite{Ama12b} (also see~\cite{Ama12c}) that 
Sobolev--Slobodeckii and H\"older spaces on uniformly regular Riemannian 
manifolds are well-behaved in the sense that they possess the same 
embedding, interpolation, and trace properties as in the classical Euclidean 
case. Moreover, what is most crucial for our purposes, they can be 
characterized by local coordinates induced by a uniformly regular atlas 
(see Theorem~\ref{thm-L.n} below). 

\smallskip 
In order to formulate our results on parabolic differential equations 
we have to introduce these function spaces. Thus we assume throughout that 
$$ 
\bal 
\bt\quad 
&\Mg\text{ is a uniformly regular Riemannian manifold}\cr 
\bt\quad 
&1\leq q\leq\iy. 
\eal  
$$ 
We denote by~$C_c^\iy(V)$ the vector space of smooth sections of~$V$ with 
compact support. Furthermore, 
\hb{\pr_{q,\ta}}~is the real, and 
\hb{\pr_{\iy,\ta}^0}~the continuous interpolation functor of order 
\hb{\ta\in(0,1)} (cf.~\cite[Section~I.2]{Ama95a} for a summary of 
interpolation theory). 

\smallskip 
For 
\hb{k\in\BN} we set 
$$ 
\Vsdot_{k,q} 
:=\sum_{j=0}^k\|\na^j\btdot\|_{L_q(V_{\tau+j}^\sa)}. 
$$ 
Suppose 
\hb{q<\iy}. Then 
\hb{W_{\coW q}^k(V):=\bigl(W_{\coW q}^k(V),\Vsdot_{k,q}\bigr)}, the 
\emph{Sobolev space} of order~$k$ (of sections of~$V$), is the completion 
of~$C_c^\iy(V)$ in~$L_q(V)$ with respect to the norm~%
\hb{\Vsdot_{k,q}}. Hence 
\hb{W_{\coW q}^0(V)=L_q(V)}. If 
\hb{k<s<k+1}, then 
$$ 
W_{\coW q}^s(V):=\bigl(W_{\coW q}^k(V),W_{\coW q}^{k+1}(V)\bigr)_{q,s-k} 
\npb 
$$ 
defines the \emph{Slobodeckii space} of order~$s$. 

\smallskip 
By $BC^k(V)$ we mean the closed \hbox{($\BR$-)linear} subspace of~$C^k(V)$ 
consisting of all 
\hb{u\in C^k(V)} satisfying 
\hb{\|u\|_{k,\iy}<\iy}, and 
\hb{BC:=BC^0}. It is a Banach space with the norm~%
\hb{\Vsdot_{k,\iy}}. If 
\hb{k<s<k+1}, then 
$$ 
BC^s(V):=\bigl(BC^k(V),BC^{k+1}(V)\bigr)_{s-k,\iy} 
$$ 
is the \emph{H\"older space} and 
$$ 
bc^s(V):=\bigl(BC^k(V),BC^{k+1}(V)\bigr)_{s-k,\iy}^0 
\npb 
$$ 
the \emph{little H\"older space} of order~$s$. 
\begin{rem(A)}\label{rem-I.H} 
\extraroman 
Suppose 
\hb{\Mg=\Rmgm} and 
\hb{\sa=\tau=0}. For 
\hb{0<\ta<1} we set 
\begin{equation}\label{I.tq} 
[u]_{\ta,q} 
:=\Bigl(\int_{\BR^m\times\BR^m} 
\Bigl(\frac{|u(x)-u(y)|_F}{|x-y|^\ta}\Bigr)^q 
\,\frac{d(x,y)}{|x-y|^m}\Bigr)^{1/q} 
\qa q<\iy, 
\end{equation} 
and 
\begin{equation}\label{I.td} 
[u]_{\ta,\iy}^\da 
:=\sup_{\dlim{x,y\in\BR^m}{0<|x-y|<\da}} 
\frac{|u(x)-u(y)|_F}{|x-y|^\ta}, 
\end{equation} 
where 
\hb{0<\da\leq\iy} and 
\hb{\esdot_{\ta,\iy}:=\esdot_{\ta,\iy}^\iy}. Then, given 
\hb{k<s<k+1} with 
\hb{k\in\BN}, 
$$ 
\|u\|_{s,q} 
:=\|u\|_{k,q}+\sum_{|\al|=k}[\pa u]_{s-k,q} 
$$ 
is an equivalent norm for $W_{\coW q}^s\RmF$ if 
\hb{q<\iy}, and for $BC^s\RmF$ and $bc^s\RmF$ if 
\hb{q=\iy}. Furthermore, 
\hb{u\in bc^s\RmF} iff 
\hb{u\in BC^k\RmF} and 
\hb{\lim_{\da\ra0}[\pa u]_{s-k,\iy}^\da=0} for 
\hb{\al\in\BN^m} with 
\hb{|\al|=k}. This explains the names `Slobodeckii' and 
`little H\"older' spaces.\nolinebreak\hfill\nolinebreak\qed 
\end{rem(A)} 
It should be observed that definitions \eqref{I.tq} and \eqref{I.td} remain 
meaningful if $F$~is replaced by any Banach space and 
$\BR^m$~by an \hbox{$m$-dimensional} interval. 

\smallskip 
Suppose 
\hb{0\leq s_0<s_1}. Then 
\begin{equation}\label{I.eW} 
W_{\coW q}^{s_1}(V)\sdh W_{\coW q}^{s_0}(V) 
\qa q<\iy, 
\end{equation} 
where 
\hb{{}\hr{}}~means `continuous' and 
\hb{{}\sdh{}} `continuous and dense' injection. Similarly, if 
\hb{0\leq s_0<s_1<s_2} with 
\hb{s_1\notin\BN}, 
\begin{equation}\label{I.eBC} 
BC^{s_2}(V)\sdh bc^{s_1}(V)\hr BC^{s_1}(V)\hr BC^{s_0}(V). 
\end{equation} 
Consequently, 
\begin{equation}\label{I.ebc} 
bc^{s_1}(V)\sdh bc^{s_0}(V) 
\qa s_0,s_1\in\BR^+\ssm\BN. 
\end{equation} 

\smallskip 
In addition, we need anisotropic spaces on `time cylinders' over~$M$. 
For this we assume   
$$ 
\bal 
{}
{\rm(i)}\quad 
    &0<T<\iy\text{ and }J=J_T:=[0,T],\text{ or }J=\BR^+;\cr 
{\rm(ii)}\quad 
    &1/\vec r:=(1,1/r), 
\eal 
$$ 
so that 
\hb{s/\vec r=(s,s/r)} for 
\hb{s\in\BR}. Then we set, for 
\hb{s\in\BR^+}, 
$$ 
W_{\coW q}^{s/\vec r}(V\times J) 
:=L_q\bigl(J,W_{\coW q}^s(V)\bigr)\cap W_{\coW q}^{s/r}\bigl(J,L_q(V)\bigr) 
\qa q<\iy, 
$$ 
and 
\begin{equation}\label{I.bca} 
bc^{s/\vec r}(V\times J) 
:=\BUC\bigl(J,bc^s(V)\bigr)\cap bc^{s/r}\bigl(J,BC(V)\bigr) 
\qa s\notin\BN, 
\end{equation} 
where $\BUC$ means `bounded and uniformly continuous'. 
As mentioned above, these spaces have been investigated in~\cite{Ama12b}, 
and in the anisotropic case in~\cite{Ama12c}, to which we refer for proofs 
of \eqref{I.eW} and~\eqref{I.eBC}. More precisely, in those papers only 
\hb{1<q<\iy} and 
\hb{n=0} have been considered. However, it is straightforward to extend 
those results to the present setting. 

\smallskip 
Suppose that 
\hb{q>1} if 
\hb{s\in\BN}. Then it is shown in~\cite{Ama17a} that 
\begin{equation}\label{I.WW} 
\left. 
\begin{split} 
{}
&u\in W_{\coW q}^{(s+r)/\vec r}(V\times J) 
 \text{ iff }\na^ju\in W_{\coW q}^{s/\vec r}(V_{\tau+j}^\sa\times J)\cr 
\noalign{\vskip-1\jot} 
&\text{for $0\leq j\leq r$ and }\pl_tu\in W_{\coW q}^{s/\vec r}(V\times J). 
\end{split} 
\right.  
\end{equation} 
Similarly, if 
\hb{s\notin\BN}, 
\begin{equation}\label{I.bcb} 
\left. 
\begin{split} 
{}
&u\in bc^{(s+r)/\vec r}(V\times J) 
 \text{ iff }\na^ju\in bc^{s/\vec r}(V_{\tau+j}^\sa\times J)\cr 
\noalign{\vskip-1\jot} 
&\text{for $0\leq j\leq r$ and }\pl_tu\in bc^{s/\vec r}(V\times J). 
\end{split} 
\right.  
\end{equation} 
\begin{rem(A)}\label{rem-I.V} 
\extraroman 
For simplicity, we consider \hbox{$F$-valued} tensor bundles only. 
However, all results of this paper remain valid if $V$~is an arbitrary 
uniformly regular vector bundle endowed with a uniformly regular metric 
and a uniformly regular bundle connection (see~\cite{Ama12c} for 
definitions). In particular, the tensor bundles~$\bigwedge^kT^*M$, 
\ \hb{0\leq k\leq m}, are special instances of this more general setting 
(cf.~\cite{Ama12b}). This puts Example~\ref{exa-I.ne(g)} into 
perspective.\nolinebreak\hfill\nolinebreak\qed
\end{rem(A)} 
\lesubsection{Parabolic Equations} 
\extraindent 
We consider initial value problems 
\begin{equation}\label{I.C} 
(\pl_t+\cA)u=f\text{ on }M\times J 
\qb u(0)=u_0\text{ on }M. 
\end{equation} 
Here $\cA$~is a differential operator of the form~\eqref{I.A}, operating 
on sections of~$V$,  but with 
\hbox{$t$-dependent} coefficients. More precisely, $\cA$~is said to be 
\hbox{$\bs$\hh{-regular}}, where 
\hb{\bs\in\BR^+\ssm\BN}, if 
\begin{equation}\label{I.aj} 
a_j\in 
bc^{\bs/\vec r}\bigl(V_{\coV\tau+\sa}^{\sa+\tau+j}(\cL(F))\times J\bigr) 
\qa 0\leq j\leq r. 
\npb 
\end{equation} 
This assumption guarantees the continuity of 
\hb{\pl+\cA} on anisotropic spaces. 
\begin{pro}\label{pro-I.A} 
Let $\cA$ be \hbox{$\bs$-regular}. Then 
\begin{equation}\label{I.AW} 
\pl_t+\cA\in\cL 
\bigl(W_{\coW q}^{(s+r)/\vec r}(V\times J),W_{\coW q}^{s/\vec r}(V\times J) 
\bigr) 
\qa 0\leq s<\bs, 
\end{equation} 
and 
\begin{equation}\label{I.AB} 
\pl_t+\cA\in\cL 
\bigl(bc^{(s+r)/\vec r}(V\times J),bc^{s/\vec r}(V\times J) 
\bigr) 
\qa 0<s\leq\bs 
\qb s\notin\BN.  
\end{equation} 
\end{pro}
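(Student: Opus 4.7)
The strategy combines the anisotropic characterizations \eqref{I.WW} and \eqref{I.bcb} with a pointwise multiplier theorem, thereby reducing the continuity of $\pl_t+\cA$ to a scalar estimate in local coordinates. For the time derivative, \eqref{I.WW} and \eqref{I.bcb} immediately give $\pl_tu\in W_q^{s/\vec r}(V\times J)$ (resp.\ $bc^{s/\vec r}(V\times J)$) whenever $u$ lies in the corresponding domain space, with norm controlled by that of~$u$. For the spatial terms in $\cA=\sum_{j=0}^ra_j\btdot\na^j$, the same characterizations place $\na^ju$ in $W_q^{s/\vec r}(V_{\tau+j}^\sa\times J)$ (resp.\ $bc^{s/\vec r}(V_{\tau+j}^\sa\times J)$) with norm controlled by the domain norm, so it remains to show that left contraction $b\mt a_j\btdot b$ by a coefficient $a_j\in bc^{\bs/\vec r}$ is a bounded linear operator on the relevant anisotropic space.

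This reduces the proposition to a pointwise multiplier lemma: if $a\in bc^{\bs/\vec r}\bigl(V_{\tau+\sa}^{\sa+\tau+j}(\cL(F))\times J\bigr)$, then $b\mt a\btdot b$ belongs to $\cL\bigl(W_q^{s/\vec r}(V_{\tau+j}^\sa\times J),W_q^{s/\vec r}(V\times J)\bigr)$ for $0\leq s<\bs$ (with $q>1$ when $s\in\thBN$), and to the analogous space in the little H\"older scale for $0<s\leq\bs$, $s\notin\BN$. To establish this, I would localize through a uniformly regular atlas: by the characterization theorem for function spaces on uniformly regular manifolds (see Theorem~\ref{thm-L.n} below and~\cite{Ama12b,Ama12c}), the relevant norms are equivalent to $\ell_q$ or $\ell_\iy$ sums of pull-back norms on Euclidean patches, and the fiberwise bound \eqref{I.ab} reduces matters to scalar anisotropic multiplier estimates on $\BR^m\times J$ with values in a finite-dimensional normed space. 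For integer spatial orders these follow from the Leibniz rule, H\"older's inequality, and interpolation; for non-integer orders one applies the Slobodeckii/H\"older difference characterization of Remark~\ref{rem-I.H} in its anisotropic form to the splitting
\[
a(x,t)b(x,t)-a(y,\tau)b(y,\tau)=\bigl[a(x,t)-a(y,\tau)\bigr]b(x,t)+a(y,\tau)\bigl[b(x,t)-b(y,\tau)\bigr],
\]
exploiting that $a$ is uniformly bounded with difference quotients of strictly higher order than those of~$b$.

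The main obstacle is the sharp form of the multiplier lemma at the endpoint $s=\bs$ in \eqref{I.AB}, which is essentially the Banach algebra property of $bc^{\bs/\vec r}(V\times J)$; here the hypothesis $\bs\notin\BN$ together with the \emph{little} H\"older vanishing condition is essential, since one no longer has the strict inequality $s<\bs$ that supplies slack in the Slobodeckii case of \eqref{I.AW}. The remainder of the argument, while notationally heavy because of the anisotropic mixed norms and the tensor bundle bookkeeping, is routine once the local characterization of $W_q^{s/\vec r}$ and $bc^{s/\vec r}$ via a uniformly regular atlas is available.
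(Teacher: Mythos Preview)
Your proposal is correct and follows essentially the same route as the paper. The paper's proof is a one-line citation of the point-wise multiplier Theorem~9.2 in~\cite{Ama12b} (in its straightforward anisotropic extension); you have simply unpacked what lies behind that citation---the reduction via \eqref{I.WW}/\eqref{I.bcb} to the action of each $a_j\btdot{}$ on $\na^ju$, followed by the multiplier estimate proved through localization on a uniformly regular atlas---so the underlying argument is the same, only presented in more detail.
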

\begin{proof} 
This is a consequence of the (straightforward extension of the) 
point-wise multiplier Theorem~9.2 in~\cite{Ama12b}. 
\end{proof} 
\begin{rem(A)}\label{rem-I.A(a)} 
\extraroman 
The \hbox{$\bs$-regularity} assumption has been imposed for simplicity. 
It is optimal for~\eqref{I.AB}, but not for~\eqref{I.AW}. Also note that 
it follows from~\eqref{I.eBC} that condition~\eqref{I.aj} in~\eqref{I.AB} 
can be replaced by 
$$ 
a_j\in 
BC^{\bs/\vec r}\bigl(V_{\coV\tau+\sa}^{\sa+\tau+j}(\cL(F))\times J\bigr) 
\qa 0\leq j\leq r, 
$$ 
if 
\hb{s<\bs}.\nolinebreak\hfill\nolinebreak\qed 
\end{rem(A)} 
\begin{rem(A)}\label{rem-I.A(b)} 
\extraroman 
If $\cA$~is autonomous, that is, its coefficients are independent of 
\hb{t\in J}, then \eqref{I.aj} reduces to 
\hb{a_j\in bc^{\bs}\bigl(V_{\coV\tau+\sa}^{\sa+\tau+j}(\cL(F))\bigr)} for 
\hb{0\leq j\leq r}.\nolinebreak\hfill\nolinebreak\qed 
\end{rem(A)} 
\begin{rem(A)}\label{rem-I.A(c)} 
\extraroman 
Suppose 
\hb{\sa=\tau=0} and 
\hb{\Mg=\Rmgm}. Then, writing~$\cA$ in the form~\eqref{I.AD},  
\hbox{$\bs$-regularity} means 
\hb{a_\al\in bc^{\bs/\vec r}\bigl(\BR^m\times J,\cL(F)\bigr)} for 
\hb{|\al|\leq r}.\nolinebreak\hfill\nolinebreak\qed 
\end{rem(A)} 
Let $\cA$ be \hbox{$\bs$-regular}. We write 
\hb{a_j(t)(p):=a_j(p,t)} for 
\hb{(p,t)\in M\times J}. Then 
$$ 
a_j(t)\in bc^{\bs}\bigl(V_{\coV\tau+\sa}^{\sa+\tau+j}(\cL(E))\bigr) 
\qa t\in J. 
$$ 
Hence 
$$ 
\cA(t):=\sum_{j=0}^ra_j(t)\btdot\na^j 
$$ 
is well-defined for 
\hb{t\in J}. The operator (family)~$\cA$ is \hh{uniformly normally} 
\hbox{[$\ve$\hh{-}]}\hh{elliptic} on 
\hb{M\times J} if $\cA(t)$~has this property uniformly with respect to 
\hb{t\in J}. Then 
\hb{\pl_t+\cA} is \hh{uniformly normally} \hbox{[$\ve$\hh{-}]}\hh{parabolic}. 
\begin{rem(A)}\label{rem-I.PP} 
\extraroman 
Suppose 
\hb{\sa=\tau=0} and 
\hb{\Mg=\Rmgm}. Then 
\hb{\pl_t+\cA} is uniformly normally parabolic iff it is uniformly 
Petrowskii-parabolic (cf.~\cite{LSU68a} or \cite{EiZ98a}, 
for example).\nolinebreak\hfill\nolinebreak\qed 
\end{rem(A)} 
Now we can formulate the main result of this paper. We suppose 
$$ 
\bal 
\rm{(i)}\quad 
&\Mg\text{ is a uniformly regular Riemannian manifold}\cr 
\noalign{\vskip-1\jot} 
&\text{without boundary}.\cr
\rm{(ii)}\quad 
&J=J_T\text{ for some }T>0.\cr 
\rm{(iii)}\quad 
&\cA\text{ is $\bs$-regular and uniformly normally elliptic}\cr 
\noalign{\vskip-1\jot} 
&\text{on $M\times J$ of order }r. 
\eal 
\npb 
$$ 
By~$\ga$ we denote the trace operator 
\hb{u\mt u(0)}. 
\begin{thm(A)}\label{thm-I.P} 
Suppose 
\hb{k\in\BN} and either 
$$ 
\bal 
(\al)\quad 
&kr\leq s<kr+1\text{ and }\bs\geq s,\text{ or}\\ 
(\ba)\quad 
&kr+1<s<(k+1)r\text{ with $s\notin\BN$ and }\bs>(k+1)r. 
\eal 
$$ 
\begin{itemize} 
\item[\rm(i)] 
Assume 
\hb{\bs>s} and 
\hb{1\leq q<\iy} with 
\hb{q>1} if 
\hb{s=kr}. Then 
$$ 
{}\qquad 
(\pl_t+\cA,\ga)\in\Lis\bigl(W_{\coW q}^{(s+r)/\vec r}(V\times J), 
W_{\coW q}^{s/\vec r}(V\times J)\times 
W_{\coW q}^{s+r(1-1/q)}(V)\bigr). 
$$ 
\item[\rm(ii)] 
Let 
\hb{s\neq kr}. Then 
$$ 
(\pl_t+\cA,\ga)\in\Lis\bigl(bc^{(s+r)/\vec r}(V\times J), 
bc^{s/\vec r}(V\times J)\times 
bc^{s+r}(V)\bigr). 
$$ 
\end{itemize} 
\end{thm(A)} 
\begin{rem(A)}\label{rem-I.P(a)} 
\extraroman 
In case~(i) the Cauchy problem~\eqref{I.C} possesses for each 
$(f,u_0)$ in 
\hb{W_{\coW q}^{s/\vec r}(V\times J)\times W_{\coW q}^{s+r(1-1/q)}(V)} 
a~unique solution~$u$ belonging to 
\hb{W_{\coW q}^{(s+r)/\vec r}(V\times J)}, and 
$$ 
\|u\|_{W_{\coW q}^{(s+r)/\vec r}(V\times J)} 
\leq c\bigl(\|f\|_{W_{\coW q}^{s/\vec r}(V\times J)} 
+\|u_0\|_{W_{\coW q}^{s+r(1-1/q)}(V)}\bigr). 
$$ 
Similarly, in case~(ii) problem~\eqref{I.C} has for each 
$$ 
(f,u_0)\in bc^{(s+r)/\vec r}(V\times J)\times bc^{s+r}(V) 
$$ 
a~unique solution 
\hb{u\in bc^{(s+r)/\vec r}}, and 
$$ 
\|u\|_{bc^{(s+r)/\vec r}(V\times J)} 
\leq c\bigl(\|f\|_{bc^{(s+r)/\vec r}(V\times J)} 
+\|u_0\|_{bc^{s+r}(V)}\bigr). 
$$ 
The proofs below show that $c$~depends on~$\ve$, a~bound for the 
$bc^{\bs/\vec r}$~norms of the coefficients, and on~$T$ only, 
but not on the individual 
operators.\nolinebreak\hfill\nolinebreak\qed 
\end{rem(A)} 
\begin{rem(A)}\label{rem-I.P(aa)} 
\extraroman 
Suppose 
\hb{kr<s<kr+1}. Then we can choose 
\hb{\bs=s} in part~(ii) of the theorem. This regularity assumption 
is optimal. In contrast, condition 
\hb{\bs>(k+1)r} if 
\hb{kr+1<s<(k+1)r} is not the best possible one. It stems from the fact 
that we derive the statements in this case by interpolation 
(cf.~the proof in 
Section~\ref{sec-T}).\nolinebreak\hfill\nolinebreak\qed 
\end{rem(A)} 
\begin{rem(A)}\label{rem-I.P(b)} 
\extraroman 
Suppose 
\hb{\sa=\tau=0} and 
\hb{\Mg=\Rmgm}. If 
\hb{s=kr}, then assertion~(i) regains (except for the 
\hbox{$\bs$-regularity} assumption which we could relax in this situation 
also) classical results due to  V.A. Solonnikov (see \cite{Sol65a} and 
\cite[IV.\S5 and VII.\S9]{LSU68a}). Our proof is based on Fourier analytic 
techniques and entirely different from Solonnikov's 
approach.\nolinebreak\hfill\nolinebreak\qed 
\end{rem(A)} 
\begin{rem(A)}\label{rem-I.P(c)} 
\extraroman 
Assume 
\hb{\sa=\tau=0} and 
\hb{\Mg=\Rmgm}. In this case, assertion~(ii) is closely related to the 
H\"older space solvability theory of parabolic equations developed by 
V.A.~Solonnikov (see Theorem~VII.10.2 in~\cite{LSU68a}, 
where even more general parabolic systems are studied).  
 
\smallskip 
In the case of scalar parabolic second order equations, Solonnikov's 
H\"older space results have been partially recovered by 
A.~Lunardi~\cite[Theorem~5.1.10]{Lun95a} using semigroup techniques. 
Although we could establish a H\"older space theory as well, 
we prefer to work with little H\"older spaces since the latter 
enjoy the density 
properties~\eqref{I.ebc}.\nolinebreak\hfill\nolinebreak\qed 
\end{rem(A)} 
\begin{rem(A)}\label{rem-I.P(d)} 
\extraroman 
In~\cite{Gru95b} G.~Grubb presented an elaborate extension of the 
\hbox{$L_p$~theory}, 
\hb{1<p<\iy}, for parabolic (boundary value) problems to manifolds. In fact, 
she studied pseudodifferential boundary value problems for operators 
acting on sections of (general) vector bundles over so-called `admissible 
manifolds', 
introduced by her and N.J.~Kok\-holm~\cite{GrK93a}. These manifolds form a 
subclass of the family of manifolds with finitely many infinite conical ends 
(cf.~Example~\ref{exa-I.ex(e)}. Thus, for this class and 
\hb{1<q<\iy}, Theorem~\ref{thm-I.P}(i) is a very particular special case of 
Grubb's results (except for her very strong regularity assumptions). 
The proofs in~\cite{Gru95b} do, however, not extend to general uniformly 
regular Riemannian manifolds, since they use in an essential way specific 
`admissible' atlases consisting of finitely many charts only 
(cf.~\cite[Lemma~1.5]{GrK93a}).\nolinebreak\hfill\nolinebreak\qed 
\end{rem(A)} 
\begin{rem(A)}\label{rem-I.P(e)} 
\extraroman 
Let the assumptions of~(i) be satisfied. Then it follows from \eqref{I.WW} 
and~(i) that the homogeneous Cauchy problem 
\begin{equation}\label{I.C0} 
(\pl_t+\cA)u=f\text{ on }M\times J 
\qb u(0)=0 
\end{equation} 
has for each 
\hb{f\in W_{\coW q}^{s/\vec r}(V\times J)} a~unique solution~$u$ such that 
$u$, $\cA u$, and~$\pl_tu$ belong to 
\hb{W_{\coW q}^{s/\vec r}(V\times J)}. 

\smallskip 
Similarly, if 
\hb{s\neq kr}, then (ii)~guarantees that \eqref{I.C0} has for each 
$f$ in 
\hb{bc^{s/\vec r}(V\times J)} a~unique solution~$u$ satisfying 
\hb{u,\cA u,\pl_tu\in bc^{s/\vec r}(V\times J)}. This shows that 
\emph{Theorem~\ref{thm-I.P} provides maximal regularity 
results}.\nolinebreak\hfill\nolinebreak\qed 
\end{rem(A)} 
Let $E_0$ and~$E_1$ be Banach spaces with 
\hb{E_1\sdh E_0}. Then 
$\cH\EeEn$ denotes the set of all 
\hb{A\in\cL\EeEn} such that~$-A$, considered as a linear operator in~$E_0$ 
with domain~$E_1$, is the infinitesimal generator of a strongly continuous 
analytic semigroup 
\hb{\{\,e^{-tA}\ ;\ t\geq0\,\}} on~$E_0$, that is, in~$\cL(E_0)$. 

\smallskip 
Suppose \hb{s=0} (so that 
\hb{1<q<\iy}) and let~$\cA$ be autonomous. It follows from 
\hb{W_{\coW q}^{0/\vec r}(V\times J)\doteq L_q\bigl(J,L_q(V)\bigr)} 
that $\cA$~has maximal $L_q\bigl(J,L_q(V)\bigr)$ regularity 
(cf.~\cite{Ama95a} or J.~Pr\"uss and G.~Simonett~\cite{PrS16a} 
for explanations). Thus a result of G.~Dore~\cite{Dor93b} 
guarantees that $\cA$~belongs to 
$\cH\bigl(W_{\coW q}^r(V),L_q^r(V)\bigr)$. The following theorem shows 
that this is also true if 
\hb{s>0}.  
\goodbreak 
\begin{thm(A)}\label{thm-I.S} 
Let $\cA$ be autonomous. 
\begin{itemize} 
\item[\rm(i)] 
Assume 
either 
\hb{s\in r\BN} and 
\hb{1<q<\iy}, or 
\hb{s\notin\BN} and 
\hb{1\leq q<\iy}. Let 
\hb{\bs>s}. Then 
$$ 
\cA\in\cH\bigl(W_{\coW q}^{s+r}(V),W_{\coW q}^s(V)\bigr). 
$$ 
\item[\rm(ii)] 
If 
\hb{s\notin\BN} and 
\hb{\bs\geq s}, then 
$$ 
\cA\in\cH\bigl(bc^{s+r}(V),bc^s(V)\bigr). 
$$ 
\end{itemize} 
\end{thm(A)}
\goodbreak 
\begin{rem(A)}\label{rem-I.S(a)} 
\extraroman 
Suppose 
\hb{s=0} (so that 
\hb{q>1}). Then Theorems \ref{thm-I.P}(i) and \ref{thm-I.S}(i) 
imply~---~independently of the Dore result~---~that $\cA$~has maximal 
$L_q\bigl(J,L_q(V)\bigr)$ regularity. This is already known if 
\hb{\sa=\tau=0} and either $M$~is compact or 
\hb{\Mg=\Rmgm}. In fact, it has been shown by 
H.~Amann, M.~Hieber, and G.~Simonett~\cite{AHS94a} that then $\cA$~has 
a bounded \hbox{$H^\iy$-calculus}, thus, in particular, bounded imaginary 
powers. Now the assertion is a consequence of the Dore--Venni 
theorem~\cite{DoV87a} (see \cite[Theorem~III.4.10.7]{Ama95a} for an 
exposition). More recently, in the Euclidean space case, maximal 
$L_q\bigl(J,L_q\Rm\bigr)$ regularity has been proved for 
\hb{1<q<\iy}~---~even in infinite-dimensional settings~---~by  
R.~Denk, M.~Hieber, and J.~Pr\"uss~\cite{DHP03a} using rather sophisticated 
vector-valued harmonic analysis techniques, namely so-called 
\hbox{$\cR$-boundedness} methods (see~\cite{PrS16a} 
for a detailed exposition; furthermore, Theorem~6.4.3 there\-in contains a 
maximal regularity theorem in higher order Sobolev--Slobodeckii spaces on 
compact hypersurfaces of~$\BR^m$ without boundary). 
The approach of our paper is much simpler. If 
\hb{s\notin\BN}, then it can be extended to infinite-dimensional settings 
also. We refrain from doing this here but refer to~\cite{Ama17a}. 

\smallskip 
Assume $\cA$~is an autonomous second order positive semidefinite differential 
operator with bounded smooth coefficients. Then, by establishing heat kernel 
bounds and using a result of M.~Hieber and J.~Pr\"uss~\cite{HP97a}, 
A.L. Mazzucato and V.~Nistor~\cite{MaN06a} prove the maximal 
$L_p\bigl(J,L_q(V)\bigr)$-regularity of~$\cA$ for 
\hb{1<p,q<\iy}. 

\smallskip 
If 
\hb{s>0} and 
\hb{1<q<\iy}, then R.~Denk and T.~Seger~\cite{DeS16a} showed 
that a scalar elliptic operator with constant coefficients generates an 
analytic semigroup on~$W_{\coW q}^s\Rm$. However, these authors do not 
establish a maximal regularity 
result.\nolinebreak\hfill\nolinebreak\qed  
\end{rem(A)} 
\begin{rem(A)}\label{rem-I.S(b)} 
\extraroman 
Suppose $\cA$~is autonomous and 
\hb{0<s\leq\bs} with 
\hb{s\notin\BN}. Then we can combine Theorem~\ref{thm-I.S}(ii) with the 
continuous maximal regularity theory of G.~Da~Prato and 
P.~Grisvard~\cite{DaG75a} (see \cite[Theorem~III.3.4.1]{Ama95a}). 
For this we set 
$$ 
\cW_\iy^{(s+r,1)}(V\times J) 
:=C\bigl(J,bc^{s+r}(V)\bigr)\cap C^1\bigl(J,bc^s(V)\bigr). 
$$ 
Then it follows 
\begin{equation}\label{I.DG} 
(\pl+\cA,\ga_0) 
\in\Lis\bigl(\cW_\iy^{(s+r,1)}(V\times J), 
C(J,bc^s(V))\times bc^{s+r}(V)\bigr). 
\end{equation} 
Note that, see~\eqref{I.bca}, 
$$ 
bc^{s/\vec r}(V\times J)\hr C\bigl(J,bc^s(V)\bigr) 
$$ 
and 
$$ 
bc^{(s+r)/\vec r}(V\times J) 
=C\bigl(J,bc^{s+r}(V)\bigr)\cap bc^{s/r+1}\bigl(J,BC(V)\bigr). 
$$ 
Thus the maximal regularity result obtained from Theorem~\ref{thm-I.P} is not 
comparable to~\eqref{I.DG}. 

\smallskip 
It is the advantage of the anisotropic spaces 
\hb{bc^{s/\vec r}(V\times J)} over the spaces 
\hb{\cW_\iy^{(s+r,1)}(V\times J)} that the former enjoy all 
embedding, interpolation, and trace properties known from the Euclidean case 
(see \cite{Ama12c} and~\cite{Ama17a}). This is of importance in the study of 
quasilinear problems. Corresponding results for the \hbox{$\cW$-spaces} 
are, to say the least, not obvious. 

\smallskip 
In a recent paper, Y.~Shao and G.~Simonett~\cite{ShS14a} established  
the fact that 
\hb{\cA\in\cH\bigl(bc^{s+2}(V),bc^s(V)\bigr)} (in the case 
\hb{n=0} and 
\hb{0<s<1}), starting with the generation theorem given in the Euclidean case 
in \cite[Theorem~4.2 and Remark~4.6]{Ama01a}. Then, using the 
Da~Prato--Grisvard approach~---~in the extended version of 
S.~Angenent~\cite{Ang90a} which allows for blow-up at 
\hb{t=0} (cf.~\cite[Theorem~III.3.4.1]{Ama95a})~---~and a regularizing 
technique of S.~Angenent in the modified form of 
J.~Escher, J.~Pr\"uss, and G.~Simonett~\cite{EPS03a}, the authors 
establish the local well-posedness and time-analyticity of the Yamabe 
flow in little H\"older spaces on uniformly regular manifolds. 

\smallskip 
For further interesting applications of the little H\"older and 
Sobolev space theory on uniformly regular Riemannian manifolds we refer to 
Y.~Shao \cite{Sh13a}, \cite{Sh14a}, and~\cite{Sh15a}, and 
J.~LeCrone and 
G.~Simonett~\cite{LeCS16a}.\nolinebreak\hfill\nolinebreak\qed 
\end{rem(A)} 
\begin{rem(A)}\label{rem-I.S(c)} 
\extraroman 
Suppose 
\hb{\sa=\tau=n=0}. Then the (generalized) heat operator~\eqref{I.Aa} is the 
negative infinitesimal generator of the `heat semigroup' 
\hb{\{\,e^{-t\cA}\ ;\ t\geq0\,\}} on~$\Mg$. More precisely, 
$$ 
\cA\in\cH\bigl(W_{\coW q}^{s+2}(M),W_{\coW q}^s(M)\bigr) 
\text{ if }0\leq s<\bs, 
$$ 
with 
\hb{q>1} if 
\hb{s\in\BN}, and 
$$ 
\cA\in\cH\bigl(bc^{s+2}(M),bc^s(M)\bigr) 
\qa 0<s\leq\bs 
\qb s\notin\BN. 
$$ 
In addition, $\cA$~has maximal regularity in the sense of 
Remark~\ref{rem-I.P(e)}. The same is true, if 
\hb{n=0} and 
$\sa$ and~$\tau$ are arbitrary, for the covariant Laplacian~$\na^*\na$ 
and for the Hodge Laplacian (with $V$ replaced by 
\hb{\bigwedge^kT^*M}). 

\smallskip 
There is an enormous amount of literature concerning heat semigroups on 
Riemannian manifolds without boundary and bounded geometry. Most of it is 
an \hbox{$L_2$-theory} and deals with kernel estimates and spectral theory 
(see, for example, E.B. Davies~\cite{Dav89a} or A.~Grigor'yan~\cite{Gri09a}). 
Those works rely heavily on curvature bounds which is no issue at all in our 
approach.\nolinebreak\hfill\nolinebreak\qed 
\end{rem(A)} 
Similarly as for compact manifolds, the cornerstones of the proofs of the 
above theorems are the corresponding assertions for Euclidean model cases 
and localizations by means of suitable atlases. In the noncompact setting we 
cannot use finite atlases but have to deal with infinitely many charts. 
This requires uniform local estimates and a somewhat elaborate technical 
machinery. Both of these are developed in the following sections. 

\smallskip 
To allow for a unified approach by Fourier analysis to parabolic and 
elliptic equations we introduce, in the next section, 
general weighted spaces on~$\BR^d$ and closed half-spaces thereof. In 
Section~\ref{sec-B} we collect those of their basic properties which we 
employ in this paper. 

\smallskip 
The study of anisotropic function spaces and the Fourier analysis therein 
are considerably facilitated by the use of anisotropic dilations. The 
latter are introduced in Section~\ref{sec-A} and some easy properties are 
described. 

\smallskip 
The next section belongs to the heart of the matter. Here we introduce the 
Fourier multiplier theorems from which we derive our results. In the case of 
anisotropic Sobolev spaces we rely on the Marcinkiewicz theorem. Anisotropic 
Slobodeckii and H\"older spaces are particular realizations of Besov spaces. 
To handle these cases, we introduce an anisotropic extension of the Fourier 
multiplier theorem established in~\cite{Ama97b}. Although this extension 
holds for operator-valued symbols and arbitrary Banach spaces, we restrict 
ourselves to the case of matrix-valued symbols. By combining the Fourier 
multiplier theorem with a lifting property we arrive at simple criteria for 
Fourier integral operators with (anisotropically) homogeneous symbols to 
realize bounded linear operators between Sobolev--Slobodeckii, 
respectively H\"older spaces. 

\smallskip 
As a first application of these Fourier multiplier theorems we give, in 
Section~\ref{sec-H}, a~very simple proof for the fact that principal part 
parabolic operators with constant coefficients define isomorphisms between 
suitable Sobolev--Slobodeckii and little H\"older spaces on 
\hb{\BR^m\times\BR}. It is the advantage of our approach that it handles 
all these spaces by one and the same technique. In particular, in this 
Fourier-analytic approach we can 
deal with all Slobodeckii spaces, including those with integrability 
index~$1$, as well as with H\"older spaces. 
This stands in 
contrast to the earlier work of other authors. In the Euclidean setting, 
Solonnikov derived his H\"older space results by carefully estimating heat 
kernels (also see~\cite{Fri64a}). However, recently in~\cite{Sol15a} he 
has used an anisotropic extension, 
due to O.A.~Ladyzhenskaya~\cite{Lad03a}, of a Fourier multiplier theorem 
for isotropic H\"older seminorms, given by L.~H{\"o}rmander in 
\cite[Theorem~7.9.6]{Hoe83aI}, to establish the H\"older continuity of 
solutions to a number of model problems (also see~\cite{Deg15a}). 

\smallskip 
The solvability results of G.~Grubb~\cite{Gru95b} in the Slobodeckii space 
setting are obtained by first establishing the corresponding results for 
Bessel potential spaces and then using interpolation. Since the Bessel 
potential space results are restricted to \hbox{$L_q$~settings} with 
\hb{1<q<\iy}, there is no way to cover the spaces~$W_1^{s/\vec r}$ or 
H\"older spaces by this method. In addition, interpolation does not lead to 
optimal regularity conditions for the coefficients. 

\smallskip 
Using an isotropic setting, we give, along the same lines, in 
Section~\ref{sec-S} a~simple proof for Theorem~\ref{thm-I.S}, 
provided $\cA$~is a 
principal part operator on~$\BR^m$ with constant coefficients. (This 
result is already contained in~\cite{Ama97b}.) By combining the findings in 
Sections \ref{sec-H} and~\ref{sec-S} we prove in the next section 
Theorem~\ref{thm-I.P} for the constant coefficient model problem on 
\hb{\BR^m\times\BR^+}. 

\smallskip 
In Section~\ref{sec-L} we present the precise definition of uniformly regular 
Riemannian manifolds and and prove the basic localization 
Theorem~\ref{thm-L.n}. The next two sections contain the localization 
machinery by which we can reduce the proof of Theorem \ref{thm-I.P} 
and~\ref{thm-I.S} to the flat case 
\hb{\Mg=\Rmgm}. This is done by constructing~a retraction-coretraction pair 
between our function spaces on~$M$ and sequence spaces whose elements take 
values in the corresponding function spaces on~$\BR^m$. Here we rely on 
our previous work on function spaces on singular manifolds 
\cite{Ama12c},~\cite{Ama12b}. 

\smallskip 
In the Euclidean setting in Section~\ref{sec-N}, we use for the 
first (and only) time the fact that in the preparatory sections 
\hbox{\ref{sec-F}--\ref{sec-C}} we have dealt with parameter-dependent spaces 
and operators. This is employed to control the lower order terms 
which, by choosing the parameter sufficiently large, can be considered 
to be small perturbations of the principal part operators. Thus our use of 
parameter-dependent spaces is somewhat different from the usual one initiated 
by M.S.~Agranovich and M.I.~Vishik~\cite{AgV64a} and greatly amplified by 
G.~Grubb (see \cite{Gru95b}, \cite{Gru96a}  and the references therein). 

\smallskip 
Finally, in the last section we prove Theorems \ref{thm-I.P} 
and~\ref{thm-I.S} on the basis of the material prepared in the preceding 
parts. 

\smallskip 
It should be mentioned that the global strategy applied in this work is 
more or less well-known, except for the Fourier-analytic treatment of the 
H\"older space case. Nevertheless, our approach 
differs in details~---~even in the Euclidean setting~---~considerably from 
those of other authors. 
\section{Function Spaces in Euclidean Settings\label{sec-F}} 
\extraindent 
We suppose 
$$ 
\bt\quad 
d\in\thBN\text{ and }\BX\in\{\BR^d,\BH\},\text{ where }
\BH=\BH^d:=\BR^{d-1}\times\BR^+ 
$$ 
and endow~$\BX$ with the Euclidean metric~$g_d$. A~\emph{weight system} 
for~$\BX$ is a triple $[\ell,\mfd,\mfnu]$ such that 
$$ 
\left.  
\bal 
{}
&\ell\in\thBN,\ \mfd=(d_1,\ldots,d_\ell), 
 \ \mfnu=(\nu_1,\ldots,\nu_\ell)\in(\thBN)^\ell 
 \text{ with}\cr 
&d_1+\cdots+d_\ell=d,\text{ and }d_\ell=1\text{ if }\BX=\BH. 
\eal 
\quad\right\} 
$$ 
We set 
\hb{\BX_i:=\BR^{d_i}} for 
\hb{1\leq i\leq\ell} with 
\hb{\BX_\ell:=\BR^+} if 
\hb{\BX=\BH}. Then 
\hb{\BX_1\times\cdots\times\BX_\ell} is the \hbox{$\mfd$\emph{-clustering}} 
of~$\BX$. We write 
$$ 
x=(x^1,\ldots,x^d)=(x_1,\ldots,x_\ell) 
\qb x_i=(x_i^1,\ldots,x_i^{d_i}) 
\qb 1\leq i\leq\ell\ , 
\npb 
$$ 
according to the interpretation of~$x$ as an element of~$\BX$ or of 
\hb{\BX_1\times\cdots\times\BX_d}. 

\smallskip 
We call $[\ell,\mfd,\mfnu]$ \emph{reduced weight system} if 
\hb{\ell<d}, and \emph{non-reduced} otherwise. If 
\hb{\ell=d}, then 
\hb{\mfd=\mf{1}=(1,\ldots,1)}. The 
weight system is \hbox{$\nu$\emph{-homogeneous}} if 
\hb{\ell=1}. Then 
\hb{\mfd=(d)} and 
\hb{\mfnu=(\nu)}. In this case we write $[1,d,\nu]$ for 
$[1,\mfd,\mfnu]$. In general, 
$$ 
\nu:=\LCM(\mfnu)=\LCM(\nu_1,\ldots,\nu_\ell)\ , 
\npb 
$$ 
the least common multiple of 
\hb{\nu_1,\ldots,\nu_\ell}. 

\smallskip 
With $[\ell,\mfd,\mfnu]$ we associate its 
\emph{non-reduced version} $[d,\mf{1},\mfom]$, where%
$$ 
\mfom=\mfom(\mfnu)=(\om_1,\ldots,\om_d) 
:=(\nu_1,\ldots,\nu_1,\nu_2,\ldots,\nu_2,\ldots,\nu_\ell,\ldots\nu_\ell) 
$$  
with $d_i$~copies of~$\nu_i$. Thus the non-reduced version 
of $[1,d,\nu]$ equals  $[d,\mf{1},\nu\mf{1}]$. Note 
\hb{\LCM(\mfom)=\nu} and 
$$ 
|\mfom|:=\om_1+\cdots+\om_d 
=\mfd\btdot\mfnu:=d_1\nu_1+\cdots+d_\ell\nu_\ell. 
$$ 
\begin{rem(A)}\label{rem-F.W} 
\extraroman 
In this paper only two weight systems will be of importance, namely 
$$ 
\bal 
{\rm(i)} 
&\quad \text{\emph{trivial}, 
 that is, \emph{$1$-homogeneous weight systems} }[1,1,1] 
 \text{ with }d=m,\cr 
{\rm(ii)} 
&\quad r\text{\emph{-parabolic weight systems} }\bigl[2,(m,1),(1,r)\bigr] 
 \text{ with }d=m+1. 
\eal 
$$ 
Nevertheless, for the sake of a unified presentation it is convenient to 
consider the general case.\nolinebreak\hfill\nolinebreak\qed
\end{rem(A)} 
For the following 
$$ 
\bal 
\bt\quad 
&\text{we fix a weight system }[\ell,\mfd,\mfnu]\text{ for }\BX.\cr 
\bt\quad 
&E\text{ is a Banach space}. 
\eal 
$$ 
Given 
\hb{k\in\nu\BN}, we introduce the parameter-dependent norms 
$$ 
\|u\|_{k/\mfnu,q;\eta} 
:=\sum_{\al\ibtdot\mfom\leq k}\eta^{k-\al\ibtdot\mfom}\,\|\pa u\|_q 
\qa 1\leq q\leq\iy, 
$$ 
for 
\hb{\eta>0}. Then parameter-dependent anisotropic \emph{Sobolev spaces} 
of order~%
\hb{k/\mfnu} over~$L_q$, 
$$ 
W_{\coW q;\eta}^{k/\mfnu} 
=\bigl(W_{\coW q}^{k/\mfnu}\BXE,\Vsdot_{k/\mfnu,q;\eta}\bigr), 
$$ 
are defined for 
\hb{1\leq q<\iy} to be the completion of~$\cS\BXE$ in~$L_q\BXE$ with 
respect to the norm~%
\hb{\Vsdot_{k/\mfnu,q;\eta}}. As usual, $\cS\BXE$~is the Fr\'echet space of 
smooth rapidly decreasing \hbox{$E$-valued} functions on~$\BX$. Then 
\hb{W_{\coW q;\eta}^{0/\mfnu}\doteq L_q} and 
\hb{W_{\coW q;\eta}^{k/\mfnu}\doteq W_{\coW q}^{k/\mfnu} 
   :=W_{\coW q;1}^{k/\mfnu}}, where 
\hb{{}\doteq{}}~means: 
equal except for equivalent norms. 

\smallskip 
We introduce 
$$ 
BC_\eta^{k/\mfnu} 
=\bigl(BC^{k/\mfnu}\BXE,\Vsdot_{k/\mfnu,\iy;\eta}\bigr), 
$$ 
the Banach space of all 
\hb{u\in BC\BXE} with 
\hb{\pa u\in BC\BXE} for 
\hb{\al\btdot\mfom\leq k}, where 
\hb{BC^{k/\mfnu}:=BC_1^{k/\mfnu}}. Then 
$$ 
\BUC_\eta^{k/\mfnu} 
=\bigl(\BUC^{k/\mfnu}\BXE,\Vsdot_{k/\mfnu,\iy;\eta}\bigr) 
$$ 
is the closed linear subspace consisting of 
all~$u$ for which $\pa u$~is uniformly continuous on~$\BX$. 

\smallskip 
We write 
\hb{\BX_\ihati:=\BX_1\times\cdots\times\wh{\BX_i}\times\cdots\times\BX_\ell} 
for 
\hb{1\leq i\leq\ell}, where the hat is the omission symbol, and 
$(x_i;x_\ihati)$ stands for 
\hb{x\in\BX} with 
\hb{x_\ihati\in\BX_\ihati}. Recalling \eqref{I.tq} and~\eqref{I.td}, we set 
$$ 
\eea{u}_{\ta,q;i}:= 
\left\{ 
\bal 
{}
&\big\|x_\ihati 
 \mt\bigl[u(\cdot;x_\ihati)\bigr]_{\ta,q}\big\|_{L_q(\iBXhi)},
 &&\quad \text{if }q<\iy,\cr 
&\sup_{x_\iihati\in\iBXhi}
 \bigl[u(\cdot;x_\ihati)\bigr]_{\ta,\iy},
 &&\quad \text{if }q=\iy. 
\eal 
\right. 
$$ 
Suppose 
\hb{k_i\in\nu_i\BN} and 
\hb{k_i<s<k_i+\nu_i} for 
\hb{1\leq i\leq\ell}. Put 
$$ 
\|u\|_{s/\mfnu,q;\eta} 
:=\sum_{i=1}^\ell\Bigl(\sum_{j=0}^{k_i/\nu_i} 
\eta^{s-j\nu_i}\ \|\na_{\cona x_i}^ju\|_q 
+\eea{\na_{\cona x_i}^{k_i/\nu_i}u}_{(s-k_i)/\nu_i,q;i}\Bigr). 
$$ 
Let 
\hb{1\leq q<\iy}. Then the parameter-dependent anisotropic \emph{Slobodeckii 
space} of order~%
\hb{s/\mfnu} over~$L_q$, 
$$ 
W_{\coW q;\eta}^{s/\mfnu} 
=\bigl(W_{\coW q}^{s/\mfnu}\BXE,\Vsdot_{s/\mfnu,q;\eta}\bigr), 
$$ 
is the completion of~$\cS\BXE$ in~$L_q$ with respect to the norm~%
\hb{\Vsdot_{s/\mfnu,q;\eta}}. The parameter-dependent anisotropic 
\emph{H\"older space} of order~%
\hb{s/\mfnu} is the Banach space 
$$ 
\BUC_\eta^{s/\mfnu} 
=\bigl(\BUC^{s/\mfnu}\BXE,\Vsdot_{s/\mfnu,q;\eta}\bigr) 
$$ 
consisting of all 
\hb{u\in\BUC\BXE} such that 
$$ 
\bigl(x_i\mt u(x_i;\cdot)\bigr) 
\in\BUC^{s/\nu_i}\bigl(\BX_i,\BUC\BXhiE\bigr) 
$$ 
for 
\hb{1\leq i\leq\ell}. (In the Euclidean setting we use the conventional 
notation $\BUC^t$ for~$BC^t$ if 
\hb{t\in\BR^+\ssm\BN}.) Lastly, the \emph{little H\"older 
space}~$buc_\eta^{s/\mfnu}$ is the closed linear subspace 
of~$\BUC_\eta^{s/\mfnu}$ formed by all~$u$ satisfying 
$$ 
\lim_{\da\ra0}\sup_{x_\iihati\in\BX_\iihati} 
\bigl[\na_{x_i}^{k_i/\nu_i}u(\cdot;x_\ihati)\bigr] 
_{(s-k_i)/\nu_i,\iy}^\da=0 
\qa 1\leq i\leq\ell. 
$$ 
\section{Basic Properties\label{sec-B}} 
\extraindent 
In this section we collect the fundamental facts about the spaces introduced 
above which are needed in what follows. We do not give proofs but refer 
to~\cite{Ama17a} for a detailed exposition, even in vector-valued settings. 
(Also see \cite{Ama09a} for a preliminary account which, however, does not 
include H\"older spaces). 

\smallskip 
Henceforth, we denote by 
\hb{c,c_0,c_1,\ldots} constants~%
\hb{\geq1} which may depend in an increasing way on nonnegative parameters 
\hb{\al,\ba,\ldots}, whereupon we write 
\hb{c(\al,\ba,\ldots)} etc. These constants may vary from occurrence to 
occurrence but are always independent of the free variables in a 
given setting. 

\smallskip 
Let $f$ and~$g$ be nonnegative functions on some set~$S$. Then 
\hb{f\sim g} means 
\begin{equation}\label{B.eq} 
g/c\leq f\leq cg. 
\end{equation} 
Suppose 
\hb{f_\eta,g_\eta\sco S\ra\BR^+} for 
\hb{\eta>0}. Then we write 
\hb{\smash{f_\eta\simeta g_\eta}\vph{\big|}} if 
\hb{f_\eta\sim g_\eta} holds 
\ \hbox{$\eta$-uniformly}, that is, the constant~$c$ in~\eqref{B.eq} is 
independent of 
\hb{\eta>0}. Let $X_\eta^{(i)}$ be normed vector spaces with norm~%
\hb{\Vsdot_\eta^{(i)}} for 
\hb{\eta>0}. Then  
\hb{\smash{X_\eta^{(1)}\doteqeta X_\eta^{(2)}}} iff 
\hb{\smash{\Vsdot_\eta^{(1)}\simeta\Vsdot_\eta^{(2)}}\vph{\Big|}}. Suppose 
\hb{a_\eta\in\cL(X_\eta^{(1)},X_\eta^{(2)})} for 
\hb{\eta>0}. Then we say: $a_\eta$~belongs to 
$\cL(X_\eta^{(1)},X_\eta^{(2)})$ \hbox{$\eta$-uniformly}, if the norm 
of~$a_\eta$ can be bounded independently of 
\hb{\eta>0}. If, in addition, 
\hb{a_\eta^{-1}\in\cL(X_\eta^{(2)},X_\eta^{(1)})} 
\hbox{$\eta$-uniformly}, then 
\hb{a_\eta\in\Lis(X_\eta^{(1)},X_\eta^{(2)})} 
\hbox{$\eta$-uniformly}. 

\smallskip 
To avoid lengthy repetitions, we call 
$(s,q)$ \hbox{\emph{$\nu$-admissible}}, if either 
\hb{s\in\nu\BN} and 
\hb{1<q<\iy}, or 
\hb{s\notin\BN} and \hb{1\leq q\leq\iy}. 

\smallskip 
Throughout this section 
$$ 
\bal 
{}
{\rm(i)}\quad 
    &(s,q)\text{ is $\nu$-admissible}.\cr 
{\rm(ii)}\quad 
    &E,E_0,E_1,\ldots\text{ are finite-dimensional}\cr
\noalign{\vskip-1\jot} 
    &\text{complex Banach spaces}. 
\eal 
$$ 
Then we set 
$$ 
\gF_{q;\eta}^{s/\mfnu}=\gF_{q;\eta}^{s/\mfnu}\BXE:= 
\left\{ 
\bal 
{}
&W_{\coW q;\eta}^{s/\mfnu},
 &&\quad \text{if }q<\iy,\cr 
&buc_{\eta}^{s/\mfnu},
 &&\quad \text{if }q=\iy, 
\eal 
\right. 
$$ 
for 
\hb{\eta>0}. We omit~$\eta$ if it equals~$1$ and write~$s$ for 
\hb{s/\mfnu} if the weight system is trivial. Thus $\gF_q^s$~is a standard 
isotropic Sobolev--Slobodeckii space if 
\hb{q<\iy} and an isotropic little H\"older space if 
\hb{q=\iy}. Observe that 
\hb{\gF_{q;\eta}^{s/\mfnu}\doteq\gF_q^{s/\mfnu}} (but not 
\hbox{$\eta$-uniformly}!). 
\goodbreak 
\begin{thm(A)}\label{thm-B.F} 
\begin{itemize} 
\item[] 
\item[\rm(i)] 
Assume 
\hb{0\leq s_0<s_1} and $(s_i,q)$ are \hbox{$\nu$-admissible}. Then 
\hb{\gF_q^{s_1/\mfnu}\sdh\gF_q^{s_0/\mfnu}} and 
$$ 
\Vsdot_{s_0/\mfnu,q;\eta} 
\leq c\kern1pt\eta^{s_0-s_1}\,\Vsdot_{s_1/\mfnu,q;\eta} 
\qa \eta>0. 
$$ 
\item[\rm(ii)] 
If 
\hb{\al\in\BN^d}, then 
\hb{\pa 
   \in\cL(\gF_{q;\eta}^{(s+\al\ibtdot\mfom)/\mfnu},\gF_{q;\eta}^{s/\mfnu})} 
$\eta$-uniformly. 
\end{itemize}
\end{thm(A)} 
The spaces~$\gF_q^{s/\mfnu}$ enjoy an important intersection space 
characterization. For this 
\hb{\BX=\BX\dotp\times\BX_\ell} with 
\hb{\BX\dotp=\BX_1\times\cdots\times\BX_{\ell-1}=\BR^{d-d_\ell}} and 
\hb{\mfnu=(\mfnu\dotp,\nu_\ell)}.  
\begin{thm(A)}\label{thm-B.I} 
If\/ 
\hb{q<\iy}, then 
$$ 
\bal 
\gF_{q;\eta}^{s/\mfnu} 
&=W_{\coW q;\eta}^{s/\mfnu}(\BX\dotp\times\BX_\ell,E)\cr 
&\doteqeta L_q\bigl(\BX_\ell,W_{\coW q;\eta}^{s/\mfnu\idotp} 
 (\BX\dotp,E)\bigr) 
 \cap W_{\coW q;\eta}^{s/\nu_\ell}\bigl(\BX_\ell,L_q(\BX\dotp,E)\bigr). 
\eal 
$$ 
Suppose 
\hb{q=\iy}. Then 
$$ 
\bal 
\gF_{\iy;\eta}^{s/\mfnu} 
&=buc_\eta^{s/\mfnu}(\BX\dotp\times\BX_\ell,E)\cr 
&\doteqeta\BUC\bigl(\BX_\ell,buc_\eta^{s/\mfnu\idotp}(\BX\dotp,E)\bigr) 
 \cap buc_\eta^{s/\nu_\ell}\bigl(\BX_\ell,\BUC(\BX\dotp,E)\bigr). 
 \eal 
$$ 
\end{thm(A)} 
The next theorem concerns point-wise multiplications. For Banach spaces 
$X_0$, $X_1$, and~$X_2$ we denote by $\cL(X_0,X_1;X_2)$ the Banach space
of all continuous bilinear maps 
\hb{\ba\sco X_0\times X_1\ra X_2}. If 
\hb{\ba\in\cL(E_0,E_1;E_2)}, then we write~$\sfm_\ba$ for its point-wise 
extension. 
\begin{thm(A)}\label{thm-B.M} 
Suppose 
\hb{s\leq s_0} with 
\hb{s_0\notin\BN} and 
\hb{s<s_0} if 
\hb{q<\iy}. Let $\ba$ belong to $\cL(E_0,E_1;E_2)$. Then 
$$ 
\sfm_\ba\in\cL\bigl(buc^{s_0/\mfnu}\BXEn,\gF_{q;\eta}^{s/\mfnu}\BXEe; 
\gF_{q;\eta}^{s/\mfnu}\BXEz\bigr) 
\quad\text{$\eta$-uniformly}. 
$$ 
If 
\hb{s\in\nu\BN}, then 
$$ 
\sfm_\ba\in\cL\bigl(\BUC^{s/\mfnu}\BXEn,\gF_{q;\eta}^{s/\mfnu}\BXEe; 
\gF_{q;\eta}^{s/\mfnu}\BXEz\bigr) 
\quad\text{$\eta$-uniformly}. 
\npb  
$$ 
In either case, the map 
\hb{\ba\mt\sfm_\ba} is linear and continuous. 
\end{thm(A)} 
For the the next theorem we recall that 
\hb{\XnXe_{\ta,q}=\XnXe_{\ta,q}^0}, if $X_0$ and~$X_1$ are Banach spaces 
with 
\hb{\smash{X_1\sdh X_0}\vph{\ci X}} and 
\hb{q<\iy}. 
\begin{thm(A)}\label{thm-B.In} 
Let $(s_0,q)$, $(s_1,q)$, and $(s_\ta,q)$ be \hbox{$\nu$-admissible} with 
\hb{s_0<s_1} and 
\hb{s_\ta:=(1-\ta)s_0+\ta s_1}. Then 
\hb{(\gF_{q;\eta}^{s_0/\mfnu},\gF_{q;\eta}^{s_1/\mfnu})_{\ta,q}^0  
   \doteqeta\gF_{q;\eta}^{s_\ta/\mfnu}}. 
\end{thm(A)} 
A~\emph{retraction} from~$X_0$ onto~$X_1$ is a continuous linear map 
\hb{r\sco X_0\ra X_1} possessing a continuous right inverse~$r^c$, 
a~coretraction. Any such pair~$(r,r^c)$ is said to be an \hbox{r-e} 
\emph{pair} for~$\XnXe$ (e~stands for `extension'). 

\smallskip 
We identify 
\hb{\pl\BH=\BX\dotp\times\{0\}} naturally with 
\hb{\BX\dotp=\BR^{d-1}} if convenient. Then the 
\emph{trace operator of order}~$k$ is the map 
\hb{\ga^k:=\bigl(u\mt\pl_\ell^ku(0)\bigr)} for 
\hb{k\in\BN}, defined for sufficiently smooth functions 
\hb{u\sco\BH\ra E}. Thus 
\hb{\ga=\ga^0}. 
\begin{thm(A)}\label{thm-B.T} 
Suppose 
\hb{s>\nu_\ell(k+1/q)} and 
\hb{s\notin\BN+\nu_\ell/q}. Then the trace map 
\hb{\vec\ga^k:=(\ga^0,\ga^1,\ldots,\ga^k)} is an \hbox{$\eta$-uniform} 
retraction 
$$ 
\text{ from }\gF_{q;\eta}^{s/\vec\nu}\BHE\text{ onto } 
\prod_{j=0}^k\gF_{q;\eta} 
   ^{(s-\nu_\ell(j+1/q))/\mfnu\idotp}(\BX\dotp,E). 
\npb 
$$  
It possesses an \hbox{$\eta$-uniform} coretraction. 
\end{thm(A)} 
It follows from Theorem~\ref{thm-B.I} that 
\hb{\gF_q^{s/\mfnu}\RhdE\hr L_q\bigl(\BR,\gF_q^{s/\nu\idotp}\RhdmeE\bigr)}. 
Hence 
$$ 
\vph{\gF}_0\gF_q^{s/\mfnu}  
:=\bigl\{\,u\in\gF_q^{s/\mfnu}\RhdE 
\ ;\ u(t)=0\text{ a.a. }t<0\,\bigr\} 
\npb 
$$ 
is a well-defined linear subspace of 
$L_q\bigl(\BR,\gF_q^{s/\mfnu\idotp}\RhdmeE\bigr)$. 

\smallskip 
Suppose 
\hb{k\in\BN} and 
\begin{equation}\label{B.ns} 
\nu_\ell(k+1/q)<s<\nu_\ell(k+1+1/q) 
\qb s\notin(\BN+1/q)\cup(\BN+\nu_\ell/q). 
\end{equation} 
It is a consequence of this trace theorem that 
$$ 
\gF_q^{s/\mfnu}\cBHE 
:=\bigl\{\,u\in\gF_q^{s/\mfnu}\BHE\ ;\ \vec\ga^ku=0\,\bigr\} 
$$ 
is a closed linear subspace of~$\gF_q^{s/\mfnu}\BHE$. The next theorem shows 
that we can extend the elements of $\gF_q^{s/\mfnu}\BHE$ 
and~$\gF_q^{s/\mfnu}\cBHE$ over~$\BR^d$ preserving their regularity. 
We denote by~$\gR$ the operator of point-wise 
restriction from~$\BR^d$ onto~$\BH$, and $\ci\gE$~is the operator of 
extension by zero from~$\BH$ over~$\BR^d$. 
\goodbreak 
\begin{thm(A)}\label{thm-B.E} 
\begin{itemize} 
\item[] 
\item[\rm(i)] 
${\gR\in\cL\bigl(\gF_{q;\eta}^{s/\mfnu}\RhdE,
   \gF_{q;\eta}^{s/\mfnu}\BHE\bigr)}$ \hbox{$\eta$-uniformly} 
and there exists a universal \hbox{$\eta$-uniform} coretraction~$\gE$ for it. 
Moreover, $\gR$~commutes with~$\pa$ for 
\hb{\al\btdot\mfom\leq s}. 
\item[\rm(ii)] 
Assume \eqref{B.ns} applies. There exists~$\ci\gR$ such that 
$\cgRcgE$ is an \hbox{$\eta$-uniform} 
\hbox{r-e} pair for 
\hb{\bigl(\gF_{q;\eta}^{s/\mfnu}\RhdE,\gF_{q;\eta}^{s/\mfnu}\cBHE\bigr)} and 
such that the restriction of~$\ci\gR$ to~$\im(\ci\gE)$ equals
\hb{\gR\sn\im(\ci\gE)}. 
\item[\rm(iii)] 
Suppose 
\hb{0\leq s<\nu_\ell/q}. Then $\gRcgE$ is an \hbox{$\eta$-uniform} 
\hbox{r-e} pair for 
\newline 
\hb{\bigl(\gF_{q;\eta}^{s/\mfnu}\RhdE,\gF_{q;\eta}^{s/\mfnu}\BHE\bigr)}. 
\end{itemize} 
\end{thm(A)} 
\goodbreak 
\begin{cor}\label{cor-B.E} 
\begin{itemize} 
\item[] 
\item[\rm(i)] 
Let either \eqref{B.ns} be satisfied or 
\hb{0\leq s<\nu_\ell/q}. Then $\vph{\gF}_0\gF_q^{s/\mfnu}$ is a closed linear 
subspace of $\gF_q^{s/\mfnu}\RhdE$. 
\item[\rm(ii)] 
If \eqref{B.ns} applies, then 
\hb{\ci\gE\in\cL\bigl(\gF_{q;\eta}^{s/\mfnu}\cBHE, 
   \vph{\gF}_0\gF_{q;\eta}^{s/\mfnu}\bigr)} \hbox{$\eta$-uniformly}. 
\item[\rm(iii)] 
Assume 
\hb{0\leq s<\nu_\ell/q}. Then 
\hb{\ci\gE\in\cL\bigl(\gF_{q;\eta}^{s/\mfnu}\BHE, 
   \vph{\gF}_0\gF_{q;\eta}^{s/\mfnu}\bigr)} \hbox{$\eta$-uniformly}. 
\end{itemize} 
\end{cor}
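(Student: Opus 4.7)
The plan is to derive parts~(ii) and~(iii) as restrictions of the codomain from the corresponding parts of Theorem~\ref{thm-B.E}, and to obtain the closedness in~(i) from a symmetric copy of Theorem~\ref{thm-B.E}(i) applied to the opposite half-space.

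For~(i), I would write $\BH^-:=\BR^{d-1}\times(-\iy,0]$ and let $\gR^-$ denote pointwise restriction from $\BR^d$ onto $\BH^-$. The reflection $x_\ell\mapsto-x_\ell$ preserves the weight system and induces an isometric bijection between $\gF_q^{s/\mfnu}(\BH^-,E)$ and $\gF_q^{s/\mfnu}\BHE$, so Theorem~\ref{thm-B.E}(i) applied to the mirrored half-space yields continuity of $\gR^-\sco\gF_q^{s/\mfnu}\RhdE\ra\gF_q^{s/\mfnu}(\BH^-,E)$. Composing with the embedding $\gF_q^{s/\mfnu}(\BH^-,E)\hr L_q(\BH^-,E)$, which follows from Theorem~\ref{thm-B.F}(i) together with $L_q\doteq\gF_q^{0/\mfnu}$, produces a continuous linear map whose kernel is precisely $\vph{\gF}_0\gF_q^{s/\mfnu}$. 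This kernel is therefore closed in both regimes of~(i).

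For parts~(ii) and~(iii) the decisive observation is that $\ci\gE$ extends by zero onto $\{x_\ell<0\}$, so $\im(\ci\gE)\is\vph{\gF}_0\gF_q^{s/\mfnu}$ whenever the extension actually lies in $\gF_q^{s/\mfnu}\RhdE$. Hence~(ii) is immediate by restricting the codomain of the \hbox{$\eta$-uniformly} bounded operator $\ci\gE\in\cL\bigl(\gF_{q;\eta}^{s/\mfnu}\cBHE,\gF_{q;\eta}^{s/\mfnu}\RhdE\bigr)$ supplied by Theorem~\ref{thm-B.E}(ii); analogously, (iii)~is obtained from Theorem~\ref{thm-B.E}(iii). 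Since $\vph{\gF}_0\gF_{q;\eta}^{s/\mfnu}$ is endowed with the ambient norm of $\gF_{q;\eta}^{s/\mfnu}\RhdE$, no fresh estimate is required and the bounds are inherited verbatim, hence \hbox{$\eta$-uniform}.

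I do not expect a genuine obstacle; the only mildly delicate point is, in the low-regularity range of~(iii), ensuring that the almost-everywhere vanishing condition on $\BH^-$ is meaningful and stable under limits in $\gF_q^{s/\mfnu}\RhdE$. This is precisely what the $L_q$-embedding furnished through Theorem~\ref{thm-B.F}(i) delivers, and it simultaneously makes the defining condition of $\vph{\gF}_0\gF_q^{s/\mfnu}$ a closed linear one, closing the last gap.
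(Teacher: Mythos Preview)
The paper states this corollary without proof, leaving it as an immediate consequence of Theorem~\ref{thm-B.E}. Your argument is correct and does exactly this for parts~(ii) and~(iii).

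For part~(i) your route through the mirrored half-space $\BH^-$ and a reflected copy of Theorem~\ref{thm-B.E}(i) works, but is more than is needed. The paper already introduces $\vph{\gF}_0\gF_q^{s/\mfnu}$ via the continuous embedding of Theorem~\ref{thm-B.I}, namely $\gF_q^{s/\mfnu}\RhdE\hr L_q\bigl(\BR,\gF_q^{s/\mfnu\idotp}\RhdmeE\bigr)$ (respectively into $\BUC(\BR,\ldots)$ for $q=\iy$). The defining condition $u(t)=0$ for a.a.\ $t<0$ is manifestly closed in that target space (it is the kernel of restriction to the negative half-line), and closedness pulls back along the embedding. This avoids appealing to a version of Theorem~\ref{thm-B.E} for the opposite half-space.

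One small correction: your appeal to Theorem~\ref{thm-B.F}(i) for the embedding $\gF_q^{s/\mfnu}\hr L_q$ does not apply when $q\in\{1,\iy\}$, since $(0,q)$ is then not $\nu$-admissible. The embedding holds nonetheless, either directly from the definition of the norms $\Vsdot_{s/\mfnu,q}$ (which dominate $\Vsdot_q$), or via Theorem~\ref{thm-B.I} as indicated above. This is a citation issue only and does not affect the validity of your argument.
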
 
The universality of~$\gE$ means that it has a representation which is 
independent of $s$, $q$, and~$\eta$. 

\smallskip 
It is of fundamental importance for what follows that all estimates 
contained implicitly or explicitly in the preceding theorems hold 
\hbox{$\eta$-uniformly}. 
\section{Anisotropic Dilations\label{sec-A}} 
\extraindent 
Henceforth, 
\hb{\sZ:=\BR^d\times\BR^+}. Its general point is written as 
\hb{\za=(\xi,\eta)} with 
$$ 
\xi=(\xi_1,\ldots,\xi_\ell) 
\in\BR^{d_1}\times\cdots\times\BR^{d_\ell}=\BR^d. 
$$ 
We equip~$\sZ$ with the \hbox{$\nu$\emph{-augmented}} 
\emph{weight system} 
\begin{equation}\label{A.a} 
\bigl[\ell+1,(\mfd,1),(\mfnu,\nu)\bigr], 
\end{equation} 
that is, we assign the weight~$\nu$ to the variable~$\eta$. Then 
$$ 
t\btdot\za:=(t^{\nu_1}\xi_1,\ldots,t^{\nu_\ell}\xi_\ell,t^\nu\eta) 
\qa t>0 
\qb \za\in\sZ, 
\npb 
$$ 
is the \emph{anisotropic dilation} on~$\sZ$ associated with~\eqref{A.a}. 

\smallskip 
Let $X$ be a Banach space and 
\hb{\thsZ:=\sZ\ssm\{0\}}. Given 
\hb{u\in C\thsZX}, we set 
\hb{\sa_tu(\za):=u(t\btdot\za)} for 
\hb{t>0} and 
\hb{\za\in\thsZ}. Then $u$~is \emph{positively \hbox{$z$-homogeneous}} 
(with respect to~\eqref{A.a}), where 
\hb{z\in\BC}, if 
\hb{\sa_tu=t^zu} for 
\hb{t>0}. 

\smallskip 
The \emph{natural quasinorm}, 
\hb{\Lda\sco\sZ\ra\BR^+}, on~$\sZ$ (with respect to~\eqref{A.a}) 
is defined by 
$$ 
\Lda(\za):=\Bigl(\sum_{i=1}^\ell|\xi_i|^{2\nu/\nu_i}+\eta^2\Bigr)^{1/2\nu} 
\qa \za\in\sZ. 
$$ 
It is positively \hbox{$1$-homogeneous}. Moreover, 
$$ 
r_\Lda\sco\thsZ\ra[\Lda=1] 
\qb \za\mt\Lda^{-1}(\za)\btdot\za 
$$ 
is the \hbox{$\Lda$\emph{-retraction}} onto 
\hb{[\Lda=1]:=\bigl\{\,\za\in\sZ\ ;\ \Lda(\za)=1\,\bigr\}}. It is 
a continuous (topological) retraction, since 
\hb{\Lda\bigl(r_\Lda(\za)\bigr)=\Lda^{-1}(\za)\Lda(\za)=1}. 

\smallskip 
Assume 
\hb{z\in\BC} and 
\hb{a\in C\thsZX} is positively \hbox{$z$-homogeneous}. If 
\hb{\al\in\BN^d} and 
\hb{\pa_\xi a\in C\thsZX}, then $\pa_\xi$~is positively 
\hb{(z-\al\btdot\mfom)}-homogeneous, 
\begin{equation}\label{A.da} 
\pa_\xi a=\Lda^{z-\al\ibtdot\mfom}(\pa_\xi a)\circ r_\Lda 
\end{equation} 
and 
\begin{equation}\label{A.dan} 
|\pa_\xi a|_X 
\leq\Lda^{\Re z-\al\ibtdot\mfom}\,\|(\pa_\xi a)\circ r_\Lda\|_\iy 
\npb 
\end{equation} 
(cf.~\cite[Lemma~1.2.1]{Ama09a}). 

\smallskip 
By $\cH_z\sZE$ we denote the vector space of all positively 
\hbox{$z$-homogeneous} 
\hb{a\in C\thsZE} such that 
\hb{\pa_\xi a\in C\thsZE} for 
\hb{\al\in\BN^d} with 
\hb{\al\btdot\mfom\leq2\,|\mfom|}. It is a Banach space with the norm 
$$ 
\|a\|_{\cH_z}:=\max_{\al\ibtdot\mfom\leq2\,|\mfom|} 
\|(\pa_\xi a)\circ r_\Lda\|_\iy. 
$$ 
It is easily verified that 
\begin{equation}\label{A.L} 
\Lda^z\in\cH_z(\sZ). 
\end{equation} 
Let 
\hb{\ba\in\cL(E_0,E_1;E_2)} and 
\hb{z_0,z_1\in\BC}. Using Leibniz' rule, we get 
\begin{equation}\label{A.b} 
\sfm_\ba\in\cL 
\bigl(\cH_{z_0}\sZEn,\cH_{z_1}\sZEe;\cH_{z_0+z_1}\sZEz\bigr) 
\end{equation} 
and the map 
\hb{\ba\mt\sfm_\ba} is linear and continuous. If 
\hb{a\in\cH_z\bigl(\sZ,\Lis\EnEe\bigr)}, then 
$$ 
a^{-1}:=\bigl(\za\mt a(\za)^{-1}\bigr) 
\in\cH_{-z}\bigl(\sZ,\Lis\EeEn\bigr) 
$$ 
and 
\begin{equation}\label{A.1} 
\|a^{-1}\|_{\cH_{-z}} 
\leq c\bigl(\|a\|_{\cH_z},\|a^{-1}\circ r_\Lda\|_\iy\bigr) 
\npb 
\end{equation} 
(cf.~Lemmas 1.4.1 and~1.4.3 in~\cite{Ama09a}). 

\smallskip 
Given 
\hb{a\sco\thsZ\ra X}, we set 
\hb{a_\eta:=a(\cdot,\eta)\sco\BR^d\ra X} for 
\hb{\eta>0}. The linear subspace of~$C\RhdE$ of 
all~$a$ satisfying 
\hb{\pa a\in C\RhdE} for 
\hb{\al\btdot\mfom\leq2\,|\mfom|}, endowed with the norm 
$$ 
\|a\|_{\cM_\eta}:= 
\max_{\al\ibtdot\mfom\leq2\,|\mfom|}\|\Lda_\eta^{\al\ibtdot\mfom}\pa a\|_\iy 
<\iy, 
$$ 
 is denoted by 
\hb{\cM_\eta(E)=\cM_\eta\RhdE}. It is a Banach space. 
As a consequence of \eqref{A.da} and~\eqref{A.dan} we obtain  
\begin{equation}\label{A.HM} 
(a\ra a_\eta) 
\in\cL\bigl(\cH_0\sZE,\cM_\eta(E)\bigr) 
\quad\text{$\eta$-uniformly}. 
\end{equation} 
Similarly as above, if 
\hb{\ba\in\cL(E_0,E_1;E_2)}, then 
$$ 
\sfm_\ba 
\in\cL\bigl(\cM_\eta(E_0),\cM_\eta(E_1);\cM_\eta(E_2)\bigr) 
\quad\text{$\eta$-uniformly}. 
$$ 
\section{Fourier Multipliers\label{sec-M}} 
\extraindent 
We write 
\hb{\cF=(u\mt\hat u)} for the Fourier transform on $\cS'\RhdE$, the space of 
\hbox{$E$-valued} tempered distributions on~$\BR^d$, and 
\hb{D:=-\imi\pl=-\imi(\pl_1,\ldots,\pl_d)}. If 
\hb{a\in C\bigl(\BR^d,\cL(E)\bigr)}, then 
\hb{a(D):=\cF^{-1}a\cF} is the Fourier multiplier operator with symbol~$a$. 
It is a linear map in~$\cS'\RhdE$ whose domain is the set of all 
\hb{u\in\cS'\RhdE} with 
\hb{a\hat u\in\cS'\RhdE}. In particular, 
$$ 
J_\eta^z:=\Lda_\eta^z(D)\in\cL\bigl(\cS'\RhdE\bigr). 
$$ 

\smallskip 
The next two theorems form the fundament on which we build our proofs. 
Throughout this section, 
\hb{\BX=\BR^d}. 
\begin{thm(A)}\label{thm-M.J} 
Let $(s_0,q)$ and $(s_1,q)$ be \hbox{$\nu$-admissible}. Then 
$$ 
J_\eta^{s_1-s_0} 
\in\cL(\gF_{q;\eta}^{s_1/\mfnu},\gF_{q;\eta}^{s_0/\mfnu}) 
\quad\text{$\eta$-uniformly}. 
$$ 
\end{thm(A)} 
\goodbreak 
\begin{thm(A)}\label{thm-M.M} 
Let $(s,q)$ be \hbox{$\nu$-admissible}. 
\begin{itemize} 
\item[\rm(i)] 
Suppose \hb{a_\eta\in\cM_\eta\bigl(\cL(E)\bigr)} for 
\hb{\eta>0}. Then 
\hb{a_\eta(D)\in\cL(\gF_q^{s/\mfnu})} and 
$$ 
\|a_\eta(D)\|_{\cL(\gF_{q;\eta}^{s/\mfnu})} 
\leq c\,\|a_\eta\|_{\cM_\eta} 
\quad\text{$\eta$-uniformly}. 
$$ 
\item[\rm(ii)] 
If 
\hb{b_\eta\in\cM_\eta\bigl(\cL(E)\bigr)} for 
\hb{\eta>0}, then
\hb{(a_\eta b_\eta)(D)=a_\eta(D)b_\eta(D)}. 
\end{itemize} 
\end{thm(A)} 
Detailed proofs for these two theorems are given in~\cite{Ama17a} (see 
also~\cite{Ama09a} for some preliminary results not covering the case 
\hb{q=\iy}). Here we restrict ourselves to some remarks. 

\smallskip 
(1) 
Consider the trivial weight system $[1,1,1]$. Let 
\hb{\eta=1} and assume $s$, $s_0$, and~$s_1$ belong to~$\BN$ (so that 
\hb{1<q<\iy} by admissibility). Then 
\hb{\gF_q^s\doteq H_q^s}, a~Bes\-sel potential space. 
In this case the `lifting' Theorem~\ref{thm-M.J} is well-known 
(e.g., \cite{Tri78a},~\cite{Tri83a}). Its anisotropic version is contained 
in \cite[Theorem~3.7.1]{Ama09a}. 

\smallskip 
In the isotropic, resp.\ anisotropic, case each 
\hb{a\in\cM\bigl(\cL(E)\bigr)} is~a Mikhlin, resp.\ Marcinkiewicz, 
multiplier. Thus, in the present setting, Theorem~\ref{thm-M.M} follows by 
combining Theorem~\ref{thm-M.J} with the Mikhlin, resp.\ Marcinkiewicz, 
multiplier theorem for~$L_q\RnE$. 

\smallskip 
It should be noted that the \hbox{$\nu$-admissibility} assumption excludes 
the choices 
\hb{q=1} and 
\hb{q=\iy} for which these multiplier theorems do not hold. 

\smallskip 
(2) 
Let 
\hb{s,s_0,s_1\notin\BN} and 
\hb{1\leq q\leq\iy}. If 
\hb{q<\iy}, then 
\hb{\gF_q^{s/\mfnu}\doteq B_{q,q}^{s/\mfnu}}, an an\-i\-so\-tropic 
Besov space, and 
\hb{\gF_\iy^{s/\mfnu}\doteq b_{\iy,\iy}^{s/\mfnu}}, an anisotropic little 
Besov space. Thus it follows that the above theorems are parameter-dependent 
anisotropic extensions of the corresponding results established 
in~\cite{Ama97b} in the isotropic case. As in that paper, $E$~can then be 
replaced by an arbitrary infinite-dimensional Banach space. 

\smallskip 
By combining these two theorems we arrive at multiplier theorems involving 
\hbox{$\gF_q^{s/\mfnu}$-spaces} of different order. 
\goodbreak 
\begin{thm(A)}\label{thm-M.JM} 
Let $(s_0,q)$ and $(s_1,q)$ be \hbox{$\nu$-admissible}. 
\begin{itemize} 
\item[\rm(i)] 
Assume 
\hb{a_\eta\in C\bigl(\BR^d,\cL(E)\bigr)} satisfies 
\hb{\Lda_\eta^{s_0-s_1}a_\eta\in\cM_\eta\bigl(\cL(E)\bigr)} for 
\hb{\eta>0}. Then 
\hb{a_\eta(D)\in\cL(\gF_q^{s_1/\mfnu},\gF_q^{s_0/\mfnu})} and 
\begin{equation}\label{M.aL} 
\|a_\eta(D)\|_{\cL(\gF_{q;\eta}^{s_1/\mfnu},\gF_{q;\eta}^{s_0/\mfnu})} 
\leq c\,\|\Lda_\eta^{s_0-s_1}a_\eta\|_{\cM_\eta} 
\quad\text{$\eta$-uniformly}. 
\end{equation} 
\item[\rm(ii)] 
If, in addition, 
\hb{a_\eta\in C\bigl(\BR^d,\Laut(E)\bigr)} with 
\begin{equation}\label{M.a1} 
\Lda_\eta^{s_1-s_0}a_\eta^{-1}\in BC\bigl(\BR^d,\cL(E)\bigr) 
\quad\text{$\eta$-uniformly}, 
\end{equation} 
then 
\hb{a_\eta(D)\in\Lis(\gF_q^{s_1/\mfnu},\gF_q^{s_0/\mfnu})} with 
\hb{a_\eta(D)^{-1}=a_\eta^{-1}(D)} and 
$$ 
\|a_\eta(D)^{-1}\|_{\cL(\gF_{q;\eta}^{s_0/\mfnu},\gF_{q;\eta}^{s_1/\mfnu})} 
\leq c\bigl(\|\Lda_\eta^{s_0-s_1}a_\eta\|_{\cM_\eta}, 
\|\Lda_\eta^{s_1-s_0}a_\eta^{-1}\|_\iy\bigr)  
\npb 
$$ 
\hbox{$\eta$-uniformly}. 
\end{itemize} 
\end{thm(A)}
\begin{proof} 
(1) 
We set 
\hb{b_\eta:=\Lda_\eta^{s_0-s_1}a_\eta}. Then the assumptions and 
Theorem~\ref{thm-M.M} imply 
\hb{b_\eta(D)\in\cL(\gF_q^{s_1/\mfnu})} and 
\begin{equation}\label{M.b} 
\|b_\eta\|_{\cL(\gF_{q;\eta}^{s_1/\mfnu})} 
\leq c\,\|\Lda_\eta^{s_0-s_1}a_\eta\|_{\cM_\eta} 
\quad\text{$\eta$-uniformly}. 
\end{equation} 
Hence 
$$ 
\bal 
a_\eta(D)u 
&=\cF^{-1}a_\eta\cF u 
 =\cF^{-1}\Lda_\eta^{s_1-s_0}\Lda_\eta^{s_0-s_1}a_\eta\cF u\cr 
&=\cF^{-1}\Lda_\eta^{s_1-s_0}\cF\cF^{-1}\Lda_\eta^{s_0-s_1}a_\eta\cF u 
 =J_\eta^{s_1-s_0}b_\eta(D)u 
\eal 
\npb 
$$ 
for 
\hb{u\in\gF_q^{s_1}}. Now \eqref{M.aL} follows from \eqref{M.b} and 
Theorem~\ref{thm-M.J}. 
 
\smallskip 
(2)  
Let the additional hypothesis be satisfied. We obtain from \eqref{M.a1} and 
Lemma~1.4.2 in~\cite{Ama09a} that 
\hb{b_\eta^{-1}\in\cM_\eta\bigl(\cL(E)\bigr)} and 
$$ 
\|b_\eta^{-1}\|_{\cM_\eta} 
\leq c\bigl(\|b_\eta\|_{\cM_\eta},\|b_\eta^{-1}\|_\iy\bigr) 
\quad\text{$\eta$-uniformly}. 
$$ 
Thus, as in step~(1), 
\ \hb{a_\eta^{-1}(D)\in\cL(\gF_q^{s_0/\mfnu},\gF_q^{s_1/\mfnu})} and 
$$ 
\|a_\eta^{-1}(D)\|_{\cL(\gF_{q;\eta}^{s_0/\mfnu},\gF_{q;\eta}^{s_1/\mfnu})}  
\leq c\bigl(\|\Lda_\eta^{s_0-s_1}a_\eta\|_{\cM_\eta}, 
\|\Lda_\eta^{s_1-s_0}a_\eta^{-1}\|_\iy\bigr) 
\npb  
$$ 
\hbox{$\eta$-uniformly}. 

\smallskip 
If 
\hb{u\in\gF_q^{s_1/\mfnu}}, then 
$$ 
a_\eta^{-1}(D)a_\eta(D)u 
=\cF^{-1}a_\eta^{-1}\cF\cF^{-1}a_\eta\cF u 
=\cF^{-1}a_\eta^{-1}a_\eta\cF u 
=u.  
$$ 
Analogously, 
$$ 
a_\eta(D)a_\eta^{-1}(D)v 
=\cF^{-1}a_\eta\cF\cF^{-1}a_\eta^{-1}\cF v=v 
\qa v\in\cF_q^{s_0/\mfnu}, 
\npb 
$$ 
Now the assertion is clear. 
\end{proof} 
\begin{cor}\label{cor-M.H} 
Let $(s_0,q)$ and $(s_1,q)$ be \hbox{$\nu$-admissible}. 
\begin{itemize} 
\item[\rm(i)] 
Suppose 
\hb{a\in\cH_{s_1-s_0}\bigl(\sZ,\cL(E)\bigr)}. Then 
\hb{a_\eta(D)\in\cL(\gF_q^{s_1/\mfnu},\gF_q^{s_0/\mfnu})} and 
$$ 
\|a_\eta(D)\|_{\cL(\gF_{q;\eta}^{s_1/\mfnu},\gF_{q;\eta}^{s_0/\mfnu})}  
\leq c\,\|a\|_{\cH_{s_1-s_0}} 
\quad\text{$\eta$-uniformly}. 
$$ 
\item[\rm(ii)] 
Let 
\hb{a\in\cH_{s_1-s_0}\bigl(\sZ,\Laut(E)\bigr)} satisfy 
$a^{-1}\circ r_\Lda\in BC\bigl([\Lda=\nolinebreak 1],\cL(E)\bigr)$. Then 
\hb{a_\eta(D)\in\Lis(\gF_q^{s_1/\mfnu},\gF_q^{s_0/\mfnu})} with 
\hb{a_\eta(D)^{-1}=a_\eta^{-1}(D)} and 
$$ 
{}
\quad\  
\|a_\eta^{-1}(D)\|_{\cL(\gF_{q;\eta}^{s_0/\mfnu},\gF_{q;\eta}^{s_1/\mfnu})} 
\leq c\bigl(\|a\|_{\cH_{s_1-s_0}},\|a^{-1}\circ r_\Lda\|_\iy\bigr) 
\quad\text{$\eta$-uniformly}. 
$$  
\end{itemize} 
\end{cor}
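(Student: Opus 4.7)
The plan is to reduce the corollary to Theorem~\ref{thm-M.JM} by feeding the homogeneous symbols through the chain \eqref{A.L}, \eqref{A.b}, \eqref{A.1}, and \eqref{A.HM}, which convert homogeneity into Marcinkiewicz-type bounds with explicit $\eta$-uniformity.

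For part~(i), I would first observe that, by \eqref{A.L}, $\Lda^{s_0-s_1}\in\cH_{s_0-s_1}(\sZ)$, and, by the pointwise multiplication~\eqref{A.b} applied to the scalar multiplication pairing, the product $\Lda^{s_0-s_1}a$ lies in $\cH_0\bigl(\sZ,\cL(E)\bigr)$ with
\[
\|\Lda^{s_0-s_1}a\|_{\cH_0}\leq c\,\|\Lda^{s_0-s_1}\|_{\cH_{s_0-s_1}}\,\|a\|_{\cH_{s_1-s_0}}\leq c\,\|a\|_{\cH_{s_1-s_0}}.
\]
Next, invoking~\eqref{A.HM} to the symbol $\Lda^{s_0-s_1}a$, its fiber
$(\Lda^{s_0-s_1}a)_\eta=\Lda_\eta^{s_0-s_1}a_\eta$ belongs to $\cM_\eta\bigl(\cL(E)\bigr)$ $\eta$-uniformly, with norm controlled by $\|a\|_{\cH_{s_1-s_0}}$. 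Now Theorem~\ref{thm-M.JM}(i) applies directly and yields~(i).

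For part~(ii), I would combine the same procedure with the inversion estimate~\eqref{A.1}: the hypotheses give $a^{-1}\in\cH_{-(s_1-s_0)}\bigl(\sZ,\Laut(E)\bigr)$ with
\[
\|a^{-1}\|_{\cH_{-(s_1-s_0)}}\leq c\bigl(\|a\|_{\cH_{s_1-s_0}},\|a^{-1}\circ r_\Lda\|_\iy\bigr).
\]
Repeating the argument of part~(i) with $a$ replaced by $a^{-1}$ and with $s_0,s_1$ interchanged shows that $\Lda_\eta^{s_1-s_0}a_\eta^{-1}$ lies in $\cM_\eta\bigl(\cL(E)\bigr)$ $\eta$-uniformly; in particular, condition~\eqref{M.a1} of Theorem~\ref{thm-M.JM}(ii) is satisfied, with uniform bound expressed in the quantities displayed above. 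Applying Theorem~\ref{thm-M.JM}(ii) gives the isomorphism assertion, the identification $a_\eta(D)^{-1}=a_\eta^{-1}(D)$, and the claimed estimate.

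The proof is essentially bookkeeping; no genuine obstacle arises. The only point that deserves care is to ensure that the constants produced at each step depend only on $\|a\|_{\cH_{s_1-s_0}}$ (and, in part~(ii), on $\|a^{-1}\circ r_\Lda\|_\iy$) and not on $\eta$. This is guaranteed because \eqref{A.HM}, \eqref{A.b}, and \eqref{A.1} are all $\eta$-uniform (or $\eta$-independent), and Theorem~\ref{thm-M.JM} is itself $\eta$-uniform in its hypotheses.
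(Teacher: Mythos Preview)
Your proof is correct and follows essentially the same route as the paper's: both arguments use \eqref{A.L} and \eqref{A.b} to place $\Lda^{s_0-s_1}a$ in $\cH_0$, then invoke \eqref{A.HM} to pass to $\cM_\eta$ and apply Theorem~\ref{thm-M.JM}(i), and for part~(ii) both bring in \eqref{A.1} to control $a^{-1}$ before repeating the same reduction. The paper's version is simply more terse.
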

\begin{proof} 
It follows from \eqref{A.L} and~\eqref{A.b} that 
\hb{\Lda^{s_0-s_1}a\in\cH_0\bigl(\sZ,\cL(E)\bigr)} and 
\newline 
\hb{\|\Lda^{s_0-s_1}a\|_{\cH_0}\leq c\,\|a\|_{\cH_{s_1-s_0}}}. Hence 
the first assertion is a consequence of~\eqref{A.HM} and part~(i) 
of the theorem. Now we get assertion~(ii) by analogous arguments 
from~\eqref{A.1}. 
\end{proof} 
\section{The Full-Space Model Case\label{sec-H}} 
\extraindent 
In this section we consider the flat case 
\hb{\Mg=\Rmgm}. We restrict ourselves to constant coefficient 
principal part operators. More precisely, we assume 
\begin{equation}\label{H.ass} 
\left. 
\bal 
\bt\quad 
&\cA={\textstyle\sum_{|a|=r}}a_\al D^\al,\ a_\al\in\cL(E).\cr
\bt\quad 
&\cA\text{ is normally $\ve$-elliptic} 
\eal 
\quad\right\} 
\end{equation} 
for some 
\hb{\ve\in(0,1]}. We set 
$$ 
\sfa:=\sum_{|a|=r}|a_\al|_{\cL(E)} 
$$ 
and note that 
\hb{|\gss\cA(\xi)|_{\cL(E)}\leq\sfa} for 
\hb{|\xi|=1}. We fix a constant~$\bka$ satisfying 
\hb{\sfa+\ve^{-1}\leq\bka}. We set 
\hb{d:=m}, endow~$\BR^m$ with the trivial weight system, and equip 
\hb{\sZ:=\BR^m\times\BR^+} with the 
\hbox{$1$-augmentation} of it. Then we put 
$$ 
a(\za):=\eta^r+\gss\cA(\xi) 
\qa \za=(\xi,\eta)\in\sZ. 
$$ 
Observe that 
\hb{\Lda(\za)=(|\xi|^2+\eta^2)^{1/2}} and 
\begin{equation}\label{H.ar} 
a\in\cH_r\bigl(\sZ,\cL(E)\bigr) 
\qb \|a\|_{\cH_r}\leq c(\sfa). 
\npb 
\end{equation} 
As usual, 
\hb{\rho(A):=\BC\ssm\sa(A)} is the resolvent set of a linear operator~$A$. 
\begin{lem}\label{lem-H.a} 
${}$
\hb{[\Re z\geq0]\is\rho\bigl(-a(\za)\bigr)} and 
$$ 
\big|\bigl(\lda+a(\za)\bigr)^{-1}\big|_{\cL(E)} 
\leq c(\bka)\bigl(\Lda^r(\za)+|\lda|\bigr)^{-1} 
\npb 
$$ 
for 
\hb{\Re\lda\geq0} and 
\hb{\za\in\thsZ}. 
\end{lem}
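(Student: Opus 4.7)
The plan is to exploit the positive $r$-homogeneity of $a$ with respect to the $1$-augmented weight system on~$\sZ$ in order to rescale to the compact set $[\Lda=1]$, and then combine a Neumann-series estimate for large $|\mu|$ with a compactness argument for bounded $|\mu|$.

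For $\za\in\thsZ$ I set $\tilde\za:=\Lda^{-1}(\za)\btdot\za\in[\Lda=1]$ and $\mu:=\Lda^{-r}(\za)\lda$. Because $a(t\btdot\za)=t^r a(\za)$ for $t>0$, one has $a(\za)=\Lda^r(\za)\,a(\tilde\za)$ and therefore
\[
\bigl(\lda+a(\za)\bigr)^{-1}=\Lda^{-r}(\za)\,\bigl(\mu+a(\tilde\za)\bigr)^{-1}.
\]
Since $\Re\mu\ge0$ is equivalent to $\Re\lda\ge0$ and $(\Lda^r(\za)+|\lda|)^{-1}=\Lda^{-r}(\za)(1+|\mu|)^{-1}$, the claim reduces to proving
\[
\big|\bigl(\mu+a(\tilde\za)\bigr)^{-1}\big|_{\cL(E)}\le c(\bka)\,(1+|\mu|)^{-1}
\]
uniformly for $\tilde\za\in[\Lda=1]$ and $\Re\mu\ge0$.

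Normal $\ve$-ellipticity together with the $r$-homogeneity of $\gss\cA(\xi)=\sum_{|\al|=r}a_\al\xi^\al$ yield $\sa(\gss\cA(\xi))\is[\Re z\ge\ve\,|\xi|^r]$ for $\xi\ne0$, hence $\sa(a(\za))\is[\Re z\ge\eta^r+\ve\,|\xi|^r]$. The positive continuous function $(\xi,\eta)\mt\eta^r+\ve\,|\xi|^r$ attains a strictly positive minimum $c_0=c_0(\ve,r)>0$ on the compact set $[\Lda=1]$, giving $\sa(a(\tilde\za))\is[\Re z\ge c_0]$ uniformly, and with it the resolvent-set inclusion $[\Re\lda\ge0]\is\rho(-a(\za))$ for every $\za\in\thsZ$. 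Moreover $|a(\tilde\za)|_{\cL(E)}\le 1+\sfa\le 1+\bka$ on $[\Lda=1]$, so a Neumann series gives $|(\mu+a(\tilde\za))^{-1}|\le 2/|\mu|$ as soon as $|\mu|\ge 2(1+\bka)$. On the complementary compact region $K:=\{\,\mu\sco\Re\mu\ge0,\ |\mu|\le 2(1+\bka)\,\}$ the map $(\tilde\za,\mu)\mt(\mu+a(\tilde\za))^{-1}$ is well defined and continuous on the compact set $[\Lda=1]\times K$ (finite-dimensionality of~$E$ makes inversion norm-continuous on the open set of invertible elements), hence uniformly bounded there by some $c_1(\bka)$. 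Patching the two ranges of $|\mu|$ delivers the required estimate.

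The main obstacle is to confirm that the compactness constant $c_1$ really depends on $\bka$ alone and not on the individual coefficients $a_\al$. This is ensured because the only quantitative ingredients feeding both the Neumann-series range and the compact-region bound are the operator-norm control $1+\sfa$ and the spectral gap $c_0(\ve,r)$, each of which is controlled by $\bka$.
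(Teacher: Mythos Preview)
Your overall strategy coincides with the paper's: rescale via the positive $r$-homogeneity of~$a$ to reduce to $[\Lda=1]$, use a Neumann series for $|\mu|\ge 2(1+\sfa)$, and handle the bounded-$|\mu|$ region separately. The one substantive difference is how the bounded region is treated. The paper argues explicitly via Cramer's rule: from $\sa\bigl(a(\za)\bigr)\is[\Re z\ge\ve/2^{r/2}]$ on $[\Lda=1]$ one gets $\bigl|\det\bigl(\lda+a(\za)\bigr)\bigr|\ge(\ve/2^{r/2})^N$, while the adjugate is bounded by a polynomial of degree $N-1$ in the entries of $\lda+a(\za)$, hence by~$c(\sfa)$. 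This produces a bound that visibly depends only on $\ve$, $\sfa$, and $N=\dim E$, hence on~$\bka$.

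Your compactness argument, by contrast, only shows that the continuous map $(\tilde\za,\mu)\mt\bigl(\mu+a(\tilde\za)\bigr)^{-1}$ is bounded on the compact set $[\Lda=1]\times K$. That bound a~priori depends on the specific map, i.e., on the particular coefficients~$a_\al$, not just on~$\bka$. Your final paragraph asserts that ``the only quantitative ingredients feeding \ldots\ the compact-region bound are the operator-norm control $1+\sfa$ and the spectral gap~$c_0$'', but this is precisely what needs to be proved, and pure compactness does not deliver it. To close the gap you must either run the Cramer--determinant argument (as the paper does) or apply compactness on the larger set $\bigl\{\,B\in\cL(E)\ ;\ |B|_{\cL(E)}\le1+\bka,\ |\det B|\ge c_0^N\,\bigr\}\times K$, which is compact and carries a continuous inverse map independent of the particular~$\cA$. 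Either route makes the $\bka$-dependence rigorous; without it the uniformity claim~---~which is essential when the lemma is invoked for families of operators in Sections~\ref{sec-N} and~\ref{sec-T}~---~is unjustified.
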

\begin{proof} 
By the normal \hbox{$\ve$-ellipticity} and the \hbox{$r$-homogeneity} 
of~$\gss\cA$ we get 
$$ 
\sa\bigl(a(\za)\bigr)\is\big[\Re z\geq\ve\Lda^r(\za)\bigr] 
\qa \za\in\thsZ. 
$$ 
Let 
\hb{\Lda(\za)=1}. If 
\hb{|\xi|^2\geq1/2}, then 
\begin{equation}\label{H.sa} 
\sa\bigl(a(\za)\bigr)\is[\Re z\geq\ve/2^{r/2}].  
\npb 
\end{equation} 
Otherwise, 
\hb{\eta^2\geq1/2} and \eqref{H.sa} applies as well. 

\smallskip 
Suppose 
\hb{z\in\sa\bigl(\lda+a(\za)\bigr)} with 
\hb{\Re\lda\geq0} and 
\hb{\Lda(\za)=1}. Then 
\hb{z=\lda+\mu} with 
\hb{\mu\in\sa\bigl(a(\za)\bigr)}. Hence 
\hb{|\mu|\geq\Re\mu\geq\ve/2^{r/2}} by~\eqref{H.sa}. Since 
\hb{\det\bigl(\lda+a(\za)\bigr)} equals the product of the eigenvalues of 
\hb{\lda+a(\za)}, counted with multiplicities, 
$$ 
\big|\det\bigl(\lda+a(\za)\bigr)\big|\geq(\ve/2^{r/2})^N 
\qa \Re\lda\geq0 
\qb \za\in[\Lda=1], 
$$ 
where 
\hb{N=\dim(E)}. Now we deduce from Cramer's rule 
(e.g.,~\cite[(I.4.12]{Kat66a}) that 
\hb{\lda\in\rho\bigl(-a(\za)\bigr)} and 
\begin{equation}\label{H.z} 
\big|\bigl(\lda+a(\za)\bigr)^{-1}\big|_{\cL(E)}\leq c(\bka) 
\qa \za\in[\Lda=1], 
\end{equation} 
provided 
\hb{\Re\lda\geq0} with 
\hb{|\lda|\leq 2(1+\sfa)}. If 
\hb{|\lda|\geq2(1+\sfa)\geq2\,\|a\circ r_\Lda\|_\iy}, then a Neumann series 
argument shows that 
$$ 
|\lda|\,\big|\bigl(\lda+a(\za)\bigr)^{-1}\big|_{\cL(E)} 
=\big|\bigl(1+\lda^{-1}a(\za)\bigr)^{-1}\big|_{\cL(E)}\leq 2 
\qa \za\in[\Lda=1]. 
$$ 
By combining this with \eqref{H.z} we find 
$$ 
\big|\bigl(\lda+a(\za)\bigr)^{-1}\big|_{\cL(E)} 
\leq c(\bka)(1+|\lda|)^{-1} 
\qa \Re\lda\geq0 
\qb \Lda(\za)=1. 
\npb 
$$ 
Now the assertion follows from 
\hb{\lda+a=\Lda^r(\Lda^{-r}\lda+a\circ r_\Lda)}. 
\end{proof} 
We set 
\hb{\tilde d:=d+1=m+1} and consider the \hbox{$r$-parabolic} weight system 
\hb{[\tilde\ell,\tilde\mfd,\tilde\mfnu]=\bigl[2,(m,1),(1,r)\bigr]} on 
\hb{\BR^{\tilde d}=\BR^m\times\BR}. Then we set 
$$ 
\gF_{q;\eta}^{s/\vec r}:=\gF_{q;\eta}^{s/\tilde\mfnu}(\BR^m\times\BR,E). 
$$ 
We also let 
\hb{\cA_\eta:=\eta+\cA} and study the normally \hbox{$\ve$-parabolic} 
differential operator 
\hb{\pl_t+\cA_\eta} on 
\hb{\BR^m\times\BR}. 
\begin{thm(A)}\label{thm-H.P} 
Let $(s,q)$ be \hbox{$r$-admissible}. Then 
\hb{\pl_t+\cA_\eta} is an element of  
$\Lis(\gF_q^{(s+r)/\vec r},\gF_q^{s/\vec r})$ and 
$$ 
\|\pl_t+\cA_\eta\| 
_{\cL(\gF_{q;\eta}^{(s+r)/\vec r},\gF_{q;\eta}^{s/\vec r})} 
+\|(\pl_t+\cA_\eta)^{-1}\| 
_{\cL(\gF_{q;\eta}^{s/\vec r},\gF_{q;\eta}^{(s+r)/\vec r})} 
\leq c(\bka) 
\npb 
$$ 
\hbox{$\eta$-uniformly}. 
\end{thm(A)}
\begin{proof} 
We endow 
\hb{\tilde\sZ:=\BR^m\times\BR\times\BR^+} with 
the \hbox{$r$-augmentation} of 
\hb{[\tilde\ell,\tilde\mfd,\tilde\mfnu]}. Then 
\hb{r=\LCM(\tilde\mfnu)}, and the natural quasinorm on~$\tilde\sZ$ 
is given by 
$$ 
\tilde\Lda(\tilde\za)=\bigl(|\xi|^{2r}+|\tau|^2+\eta^2\bigr)^{1/2r} 
\sim\bigl(\Lda^{2r}(\xi,\eta^{1/r})+|\tau|^2\bigr)^{1/2r} 
\qa \tilde\za=(\xi,\tau,\eta)\in\tilde\sZ, 
$$ 
with 
\hb{\za=(\xi,\eta)\in\sZ}. We set 
$$ 
\tilde a(\tilde\za):=-\imi\tau+\eta+\gss\cA(\xi). 
$$ 
It is obvious that 
\begin{equation}\label{H.a} 
\tilde a\in\tilde\cH_r:=\cH_r\bigl(\tilde\sZ,\cL(E)\bigr) 
\qb \|\tilde a\|_{\tilde\cH_r}\leq c(\bka). 
\end{equation} 
Since 
\hb{\tilde a(\tilde\za)=-\imi\tau+a(\xi,\eta^{1/r})}, it follows from 
Lemma~\ref{lem-H.a} that it is invertible for 
\hb{\tilde\za\neq0} and 
$$ 
|\tilde a^{-1}(\tilde\za)|_{\cL(E)} 
\leq c(\bka)\bigl(\Lda^r(\xi,\eta^{1/r})+|\tau|\bigr)^{-1} 
\leq c(\bka)\tilde\Lda^{-r}(\tilde\za). 
$$ 
Thus 
\hb{\|\tilde a^{-1}\circ r_{\tilde\Lda}\|_\iy\leq c(\bka)}. Hence we infer  
from \eqref{A.1} and~\eqref{H.a} that 
\hb{\tilde a^{-1}\in\tilde\cH_{-r}} and 
\hb{\|\tilde a^{-1}\|_{\tilde\cH_{-r}}\leq c(\bka)}. Now the assertion is a 
consequence of Corollary~\ref{cor-M.H} and the fact that 
\hb{\tilde a_\eta(\tilde D)=\pl_t+\cA_\eta}, where 
\hb{\tilde D:=(D,D_t)}. 
\end{proof} 
\section{The Semigroup\label{sec-S}} 
\extraindent 
We continue to presuppose conditions~\eqref{H.ass} and use the notations 
of the preceding section. Then 
\hb{\gF_q^s=\gF_q^s\RmE}. 
\begin{thm(A)}\label{thm-S.R} 
Let $(s,q)$ be \hbox{$1$-admissible}. Then 
\hb{\cA_\eta\in\Lis(\gF_q^{s+r},\gF_q^s)}, the half-plane 
\hb{[\Re z\geq0]} is contained in 
\hb{\rho(-\cA_\eta)}, and 
\begin{equation}\label{S.R} 
\|\cA_\eta\|_{\cL(\gF_{q;\eta}^{s+r},\gF_{q;\eta}^s)} 
+(|\lda|+\eta)^{1-j}\,\|(\lda+\cA_\eta)^{-1}\| 
_{\cL(\gF_{q;\eta}^s,\gF_{q;\eta}^{s+jr})} 
\leq c(\bka) 
\npb  
\end{equation} 
for 
\hb{\Re\lda\geq0}, 
\ \hb{\eta>0}, and 
\hb{j=0,1}. 
\end{thm(A)} 
\begin{proof} 
First we infer from \eqref{H.ar}, \eqref{A.L}, and~\eqref{A.b} that 
\hb{\Lda^{-r}a\in\cH_0\bigl(\sZ,\cL(E)\bigr)} and 
\hb{\|\Lda^{-r}a\|_{\cH_0}\leq c(\sfa)}. Hence, 
by Corollary~\ref{cor-M.H}(i), 
\begin{equation}\label{S.aD} 
a_\eta(D)\in\cL(\gF_q^{s+r},\gF_q^s) 
\qb \|a_\eta(D)\|_{\cL(\gF_{q;\eta}^{s+r},\gF_{q;\eta}^s)}\leq c(\sfa) 
\end{equation} 
\hbox{$\eta$-uniformly}. Using \eqref{H.ar} once more, we obtain from 
\eqref{A.dan} and Lemma~\ref{lem-H.a} that 
$$ 
\Lda^{\ba\ibtdot\mfom}\,|(\pl_\xi^\ba a)(\lda+a)^{-1}|_{\cL(E)}(\za)
\leq c(\sfa)\Lda^r(\za)\bigl(\Lda^r(\za)+|\lda|\bigr)^{-1} 
\leq c(\sfa) 
$$ 
for 
\hb{\ba\in\BN^m}, 
\ \hb{\za\in\thsZ}, and 
\hb{\Re\lda\geq0}. From this, \cite[Lemma~1.4.2]{Ama09a}, and 
Lemma~\ref{lem-H.a} we get 
\begin{equation}\label{S.a1} 
\left. 
\begin{split} 
\Lda^{\al\ibtdot\mfom}(\za)\,|\pa_\xi(\lda+a)^{-1}(\za)|_{\cL(E)} 
&\leq c(\sfa)\,\big|\bigl(\lda+a(\za)\bigr)^{-1}\big|_{\cL(E)}\cr 
&\leq c(\bka)\bigl(\Lda^r(\za)+|\lda|\bigr)^{-1} 
\end{split} 
\right. 
\end{equation} 
for 
\hb{\al\in\BN^m}, 
\ \hb{\za\in\thsZ}, and 
\hb{\Re\lda\geq0}. Using \eqref{A.L}, \eqref{A.da}, \eqref{S.a1}, and 
Leibniz' rule, we find 
\begin{equation}\label{S.aL} 
\Lda^{\al\ibtdot\mfom}\,\big|\pa_\xi\bigl(\Lda^r(\lda+a)^{-1}\bigr)
\big|_{\cL(E)}(\za)
\leq c(\bka)\Lda^r(\za)\bigl(\Lda^r(\za)+|\lda|\bigr)^{-1} 
\leq c(\bka) 
\npb 
\end{equation} 
for 
\hb{\al\in\BN^m} with 
\hb{\al\btdot\mfom\leq2\,|\mfom|=2m}, 
\ \hb{\za\in\thsZ}, and 
\hb{\Re\lda\geq0}. 

\smallskip 
Note that \eqref{S.a1} guarantees 
$$ 
(\lda+a_\eta)^{-1}\in\cM_\eta\bigl(\BR^m,\cL(E)\bigr) 
\qb \|(\lda+a_\eta)^{-1}\|_{\cM_\eta}\leq c(\bka)(|\lda|+\eta^r)^{-1} 
$$ 
\hbox{$\eta$-uniformly} for 
\hb{\Re\lda\geq0}. Similarly, by \eqref{S.aL}, 
$$ 
\Lda_\eta^r(\lda+a_\eta)^{-1}\in\cM_\eta\bigl(\BR^m,\cL(E)\bigr) 
\qb \|\Lda_\eta^r(\lda+a_\eta)^{-1}\|_{\cM_\eta} 
\leq c(\bka) 
$$ 
\hbox{$\eta$-uniformly} for 
\hb{\Re\lda\geq0}. Hence, by Theorem~\ref{thm-M.M}, 
\begin{equation}\label{S.aD1} 
(\lda+a_\eta)^{-1}(D)\in\cL(\gF_\eta^s) 
\qb \|(\lda+a_\eta)^{-1}(D)\|_{\cL(\gF_{q;\eta}^s)} 
\leq c(\bka)(|\lda|+\eta^r)^{-1} 
\end{equation} 
\hbox{$\eta$-uniformly}, and, similarly, 
$$ 
J_\eta^r(\lda+a_\eta)^{-1}(D) 
=\bigl(\Lda_\eta^r(\lda+a_\eta)^{-1}\bigr)(D)\in\cL(\gF_q^s) 
$$ 
and, due to Theorem~\ref{thm-M.J}, 
$$ 
\|(\lda+a_\eta)^{-1}(D)\|_{\cL(\gF_{q;\eta}^s,\gF_{q;\eta}^{s+r})} 
\leq c\,\|J_\eta^r(\lda+a_\eta)^{-1}(D)\|_{\cL(\gF_{q;\eta}^s)} 
\leq c(\bka) 
$$ 
\hbox{$\eta$-uniformly} for 
\hb{\Re\lda\geq0}. Using \eqref{S.aD} we find, similarly as in the proof of 
Theorem~\ref{thm-M.J}, that 
\hb{(\lda+a_\eta)^{-1}(D)=\bigl(\lda+a_\eta(D)\bigr)^{-1}}. 
Now the assertion follows from~\eqref{S.aD}, 
\ \hb{\cA_\eta=a_{\eta^{1/r}}(D)}, and \eqref{S.aD1}. 
\end{proof} 
\begin{cor}\label{cor-S.R} 
Let $(s,q)$ be \hbox{$1$-admissible}. Then 
\hb{\cA_\eta\in\cH(\gF_q^{s+r},\gF_q^s)} and the semigroup 
\hb{\{\,e^{-t\cA_\eta}\ ;\ t\geq0\,\}} is exponentially decaying. 
\end{cor}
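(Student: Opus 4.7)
The plan is to extract both assertions directly from the resolvent bounds already established in Theorem~\ref{thm-S.R}, using standard semigroup theory plus a shifting trick to obtain the exponential rate. First I would verify that $\cA_\eta\in\cH(\gF_q^{s+r},\gF_q^s)$. Theorem~\ref{thm-S.R} provides $\cA_\eta\in\Lis(\gF_q^{s+r},\gF_q^s)\subset\cL(\gF_q^{s+r},\gF_q^s)$, the inclusion $[\Re\lda\geq0]\is\rho(-\cA_\eta)$, and the estimate
$$
|\lda|\,\|(\lda+\cA_\eta)^{-1}\|_{\cL(\gF_q^s)}
+\|(\lda+\cA_\eta)^{-1}\|_{\cL(\gF_q^s,\gF_q^{s+r})}\leq c(\bka)
$$
throughout $\{\Re\lda\geq0\}$. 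Combined with the density $\gF_q^{s+r}\sdh\gF_q^s$ furnished by Theorem~\ref{thm-B.F}(i), this is precisely the standard resolvent criterion for generators of strongly continuous analytic semigroups (e.g., Theorem~I.1.2.2 in~\cite{Ama95a}). Hence $-\cA_\eta$, regarded as an unbounded operator in $\gF_q^s$ with domain $\gF_q^{s+r}$, generates a strongly continuous analytic semigroup on $\gF_q^s$, that is, $\cA_\eta\in\cH(\gF_q^{s+r},\gF_q^s)$.

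For the exponential decay I would exploit the splitting $\cA_\eta=(\eta/2)\,\id+\cA_{\eta/2}$. Applying Theorem~\ref{thm-S.R} with $\eta$ replaced by $\eta/2$ yields $(|\lda|+\eta/2)\,\|(\lda+\cA_{\eta/2})^{-1}\|_{\cL(\gF_q^s)}\leq c(\bka)$ for $\Re\lda\geq0$; in particular $\cA_{\eta/2}$ is invertible on $\gF_q^s$ and $|\lda|\,\|(\lda+\cA_{\eta/2})^{-1}\|_{\cL(\gF_q^s)}$ is uniformly bounded on the closed right half-plane. By the same standard characterization, $-\cA_{\eta/2}$ generates a \emph{bounded} analytic semigroup on $\gF_q^s$. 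Since $(\eta/2)\,\id$ commutes with $\cA_{\eta/2}$, we have
$$
e^{-t\cA_\eta}=e^{-t\eta/2}\,e^{-t\cA_{\eta/2}}\qa t\geq0,
$$
whence $\|e^{-t\cA_\eta}\|_{\cL(\gF_q^s)}\leq M\,e^{-t\eta/2}$ for some $M=M(\bka)$, proving the claimed exponential decay.

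In short, the corollary is a bookkeeping consequence: all the analytic substance was already accomplished in Theorem~\ref{thm-S.R}, and no new estimates are needed. There is no real obstacle; the only point deserving attention is the invocation of the correct generation theorem and the verification that the resolvent estimate passes through the shift $\eta\mapsto\eta/2$ with the same constant $c(\bka)$, which is immediate since Theorem~\ref{thm-S.R} holds for every $\eta>0$.
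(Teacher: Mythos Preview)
Your argument is correct. The first half---deducing $\cA_\eta\in\cH(\gF_q^{s+r},\gF_q^s)$ from the resolvent bounds of Theorem~\ref{thm-S.R} together with the density $\gF_q^{s+r}\sdh\gF_q^s$---is essentially what the paper does (the paper cites \eqref{I.eW} and~\eqref{I.ebc} for density and notes closedness via \cite[Lemma~I.1.1.2]{Ama95a}, but the content is the same).

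For the exponential decay you take a genuinely different route. The paper argues abstractly: since $-\cA_\eta$ generates an analytic semigroup, its resolvent set contains a sector $[\,|\arg z|\leq\vp\,]$ with $\vp>\pi/2$; combined with $0\in\rho(-\cA_\eta)$ and openness of the resolvent set this forces $\sa(-\cA_\eta)\is[\Re z\leq-\ga]$ for some $\ga=\ga(\eta)>0$, and for analytic semigroups the growth bound equals the spectral bound. Your shifting trick $\cA_\eta=\eta/2+\cA_{\eta/2}$ is more direct and yields the explicit rate~$\eta/2$, which is additional information the paper's argument does not give. One small caveat: the resolvent estimate of Theorem~\ref{thm-S.R} is stated in the $\eta$-dependent norms $\gF_{q;\eta}^s$, so after converting to the fixed norm $\gF_q^s$ the semigroup bound $M$ will in general depend on~$\eta$ as well as on~$\bka$; this does not affect the assertion of the corollary, which does not claim uniformity in~$\eta$.
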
 
\begin{proof} 
Since 
\hb{\cA_\eta\in\Lis(\gF_q^{s+r},\gF_q^s)}, it follows that $\cA_\eta$~is 
closed if we consider it as a linear operator in~$\gF_q^s$ with 
domain~$\gF_q^{s+r}$ (cf.~\cite[Lemma~I.1.1.2]{Ama95a}). Moreover, it is then 
densely defined, due to \eqref{I.eW} and~\eqref{I.ebc}. Now 
\hb{\cA_\eta\in\cH(\gF_q^{s+r},\gF_q^s)} is a well-known consequence 
of the resolvent estimate contained in~\eqref{S.R}. 

\smallskip 
From semigroup theory it is known that there exists 
\hb{\vp\in(\pi/2,\pi)} such that 
\hb{[\,|\arg z|\leq\vp]\is\rho(-\cA_\eta)}. From this and the fact that 
\hb{0\in\rho(-\cA_\eta)} it follows that there exists 
\hb{\ga=\ga(\eta)>0} such that 
\hb{\sa(-\cA_\eta)\is[\Re z\leq-\ga]}, that is, 
the spectral bound of~$-\cA_\eta$ is negative. Hence the growth bound is 
negative too.
\end{proof} 
\begin{pro}\label{pro-S.S} 
Let $(s,q)$ be \hbox{$r$-admissible}. If 
\hb{f\in\gF_q^{s/\vec r}}, then 
\begin{equation}\label{S.int} 
(\pl_t+\cA_\eta)^{-1}f 
=\int_{-\iy}^te^{-(t-\tau)\cA_\eta}f(\tau)\,d\tau 
\qa \text{a.a.\ }t\in\BR. 
\end{equation} 
\end{pro}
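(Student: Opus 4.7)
The plan is to identify $(\pl_t+\cA_\eta)^{-1}$ with the convolution operator $Tf(t):=\int_{-\iy}^t e^{-(t-\tau)\cA_\eta}f(\tau)\,d\tau$ first on a dense subclass of~$\gF_q^{s/\vec r}$ where $Tf$ admits an unambiguous semigroup-theoretic meaning, and then to extend by continuity. The three ingredients are Corollary~\ref{cor-S.R}, which produces an exponentially decaying analytic semigroup on~$\gF_q^s$; Theorem~\ref{thm-H.P}, which furnishes the isomorphism property and hence uniqueness; and Theorem~\ref{thm-B.I}, which embeds the anisotropic spaces into time-valued $L_q$- or $\BUC$-spaces where the convolution lives. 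Concretely, Corollary~\ref{cor-S.R} (applicable because every $r$-admissible pair is $1$-admissible) gives $\|e^{-t\cA_\eta}\|_{\cL(\gF_q^s)}\leq Me^{-\ga t}$ with $\ga>0$, so the kernel $K(t):=\chi_{[0,\iy)}(t)\,e^{-t\cA_\eta}$ lies in $L_1(\BR;\cL(\gF_q^s))$. Young's inequality then makes $T=K\ast\cdot$ bounded on $L_q(\BR;\gF_q^s)$ if $q<\iy$ and on $\BUC(\BR;\gF_q^s)$ if $q=\iy$, and Theorem~\ref{thm-B.I} yields continuous embeddings of both $\gF_q^{s/\vec r}$ and $\gF_q^{(s+r)/\vec r}$ into these same target spaces, so that both sides of~\eqref{S.int} are well defined and depend continuously on~$f$.

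\textbf{Identification on a dense class.} Pick a dense subspace $\cD\is\gF_q^{s/\vec r}$ of smooth, compactly $t$-supported functions with values in $\bigcap_k\dom(\cA_\eta^k)$ (for instance a suitable regularization of $\cS(\BR^{m+1},E)$). For $f\in\cD$, the substitution $\sa=t-\tau$ gives $Tf(t)=\int_0^\iy e^{-\sa\cA_\eta}f(t-\sa)\,d\sa$, and differentiation under the integral~---~justified by the smoothness of $f$ and the generator identity $\pl_\sa e^{-\sa\cA_\eta}=-\cA_\eta e^{-\sa\cA_\eta}$~---~yields
\[
\pl_t Tf(t)=f(t)-\cA_\eta Tf(t),
\]
so $(\pl_t+\cA_\eta)Tf=f$ pointwise in $\gF_q^s$. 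The exponential decay and smoothing of the semigroup, combined with the compact $t$-support of $f$, ensure that $Tf$ together with its derivatives $\pl_t^j\na^\al Tf$ lie in the relevant $L_q$- or $\BUC$-spaces, so by Theorem~\ref{thm-B.I} we conclude $Tf\in\gF_q^{(s+r)/\vec r}$. Since $(\pl_t+\cA_\eta)^{-1}f$ is likewise in $\gF_q^{(s+r)/\vec r}$ and is sent to $f$ by $\pl_t+\cA_\eta$, the injectivity part of Theorem~\ref{thm-H.P} forces $Tf=(\pl_t+\cA_\eta)^{-1}f$ on~$\cD$.

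\textbf{Extension and main obstacle.} Both $f\mapsto Tf$ and $f\mapsto(\pl_t+\cA_\eta)^{-1}f$ are continuous from $\gF_q^{s/\vec r}$ into $L_q(\BR;\gF_q^s)$ (or $\BUC(\BR;\gF_q^s)$ when $q=\iy$), by Young's inequality on the one side and by Theorem~\ref{thm-H.P} composed with Theorem~\ref{thm-B.I} on the other. Density of $\cD$ therefore propagates the identity to all $f\in\gF_q^{s/\vec r}$, as an equality in $L_q(\BR;\gF_q^s)$, i.e.\ for almost all $t\in\BR$, which is exactly~\eqref{S.int}. The one step requiring genuine care is the verification $Tf\in\gF_q^{(s+r)/\vec r}$ for $f\in\cD$, since this membership is what enables the uniqueness argument; the qualitative smoothness and decay of $Tf$ must be translated into the precise anisotropic norm via Theorem~\ref{thm-B.I}. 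Alternatively one bypasses this bookkeeping by checking injectivity of $\pl_t+\cA_\eta$ directly on $L_q(\BR;\gF_q^s)$ from the Fourier representation in the proof of Theorem~\ref{thm-H.P} (the symbol $-\imi\tau+a_\eta(\xi)$ being invertible on $\BR^{m+1}$), whereupon $(\pl_t+\cA_\eta)\bigl(Tf-(\pl_t+\cA_\eta)^{-1}f\bigr)=0$ in $L_q(\BR;\gF_q^s)$ closes the argument.
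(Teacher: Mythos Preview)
Your overall strategy~---~identify the convolution operator with $(\pl_t+\cA_\eta)^{-1}$ on a dense subclass and extend by continuity via Young's inequality and Theorem~\ref{thm-B.I}~---~is exactly the paper's approach. The difference-quotient computation in the paper's step~(2) and your differentiation under the integral are equivalent ways to obtain $(\pl_t+\cA_\eta)Tf=f$ on the dense class.

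There is, however, a genuine gap in your choice of dense class when $q=\iy$. Functions with compact $t$-support (or Schwartz-type decay) are \emph{not} dense in $buc^{s/\vec r}$: the constant-in-$t$ function $f(x,t)=g(x)$ with $0\neq g\in buc^s\RmE$ lies in $buc^{s/\vec r}$ but satisfies $\|f-h\|_{\BUC(\BR,buc^s)}\geq\|g\|$ for every compactly $t$-supported~$h$. Thus your density argument does not close for $q=\iy$, and the alternative you suggest (injectivity of $\pl_t+\cA_\eta$ on $\BUC(\BR;\gF_\iy^s)$ via Fourier analysis) still requires a dense class to establish the identity in the first place.

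The paper circumvents this by using $buc^{(s+r)/\vec r}$ itself as the approximating class for $q=\iy$, invoking the dense embedding $buc^{(s+r)/\vec r}\sdh buc^{s/\vec r}$. By Theorem~\ref{thm-B.I}, any $f_j\in buc^{(s+r)/\vec r}$ lies in $\BUC(\BR,buc^{s+r})$, which is enough to run the semigroup computation $(\pl_t+\cA_\eta)(V*f_j)=f_j$ without ever needing compact support or decay in~$t$. For $q<\iy$ the paper uses Schwartz functions, as you do. The paper also sidesteps your ``main obstacle'' of proving $Tf\in\gF_q^{(s+r)/\vec r}$: on the dense class it only establishes $V*g\in\BUC(\BR,\gF_q^{s+r})\cap C^1(\BR,\gF_q^s)$ and $(\pl_t+\cA_\eta)(V*g)=g$, then passes to the limit in $L_q(\BR,\gF_q^s)$ (resp.\ $\BUC(\BR,\gF_q^s)$) on both sides.
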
 
\begin{proof} 
(1) 
We fix 
\hb{\eta>0} and set 
\hb{U(t)=V(t):=e^{-t\cA_\eta}} for 
\hb{t\geq0}, and 
\hb{V(t)=0} for 
\hb{t<0}. Since the semigroup 
\hb{\bigl\{\,U(t)\ ;\ t\geq0\,\bigr\}} is exponentially decaying, 
it follows that $V$~belongs to 
$L_1\bigl(\BR,\cL(\gF_q^s)\bigr)$. Hence, by Young's inequality, 
\begin{equation}\label{S.V} 
(g\mt V*g)\in\cL\bigl(L_q(\BR),\gF_q^s\bigr) 
\qa 1\leq q\leq\iy, 
\end{equation} 
and 
$$ 
V*g(t)=\int_{-\iy}^tU(t-\tau)g(\tau)\,d\tau 
\qa \text{a.a.\ }t\in\BR, 
\qb g\in L_1(\BR,\gF_q^s). 
\npb 
$$ 
This remains valid if $L_\iy$~is replaced by~$\BUC$. 

\smallskip 
It is a consequence of Theorem~\ref{thm-S.R} that 
\hb{\Vsdot_{\gF_q^{s+r}}\sim\|\cA_\eta\btdot\|_{\gF_q^s}}. Thus we infer from 
Corollary~\ref{cor-S.R} that 
\hb{\bigl\{\,U(t)\ ;\ t\geq0\,\bigr\}} restricts to a strongly 
continuous exponentially decaying analytic semigroup on~$\gF_q^{s+r}$ 
(e.g.,~\cite[Theorem~V.2.1.3]{Ama95a}). 

\smallskip 
(2) 
Assume 
\hb{g\in\BUC(\BR,\gF_q^{s+r})}. Then the arguments of step~(1) show that 
\hb{v:=V*g} belongs to 
$\BUC(\BR,\gF_q^{s+r})$. Given 
\hb{h>0}, 
$$ 
v(t+h)-v(t) 
=\int_t^{t+h}U(t+h-\tau)g(\tau)\,d\tau+\bigl(U(h)-1\bigr)v(t) 
\qa t\in\BR. 
$$ 
From this we deduce that the right derivative~$\pl_t^+v$ exists in~$\gF_q^s$ 
and equals 
\hb{g-\cA_\eta v}. Since this function is continuous, 
\hb{v\in C^1(\BR,\gF_q^s)} and 
\hb{(\pl_t+\cA_\eta)v=g}, that is, 
\hb{(\pl_t+\cA_\eta)^{-1}g=V*g}. 

\smallskip 
(3) 
Suppose 
\hb{q<\iy}. Then, see \cite{Ama17a} or~\cite{Ama09a}, 
$$ 
\cS\bigl(\BR,\cS\RmE\bigr)\sdh\cS(\BR^m\times\BR,E) 
\sdh W_{\coW q}^{(s+r)/\vec r}\sdh W_{\coW q}^{s/\vec r} 
$$ 
and 
$$ 
\cS\bigl(\BR,\cS\RmE\bigr)\hr\cS(\BR,W_{\coW q}^{s+r}) 
\hr\BUC(\BR,W_{\coW q}^{s+r}). 
$$ 
Thus, if 
\hb{f\in W_{\coW q}^{s/\vec r}}, there exists a sequence~$(f_j)$ in 
\hb{W_{\coW q}^{s/\vec r}\cap\BUC(\BR,W_{\coW q}^{s+r})} converging 
in~$W_{\coW q}^{s/\vec r}$, hence, 
by Theorem~\ref{thm-B.I} in~$L_q(\BR,W_{\coW q}^s)$, towards~$f$. 
By step~(2), 
\ \hb{(\pl_t+\cA_\eta)^{-1}f_j=V*f_j} for 
\hb{j\in\BN}. It follows from \eqref{S.V} that 
\hb{V*f_j\ra V*f} in~$L_q(\BR,W_{\coW q}^s)$. Theorem~\ref{thm-H.P} implies 
that 
\hb{(\pl_t+\cA_\eta)^{-1}f_j\ra(\pl_t+\cA_\eta)^{-1}f} 
in $W_{\coW q}^{(s+r)/\vec r}$, hence in~$L_q(\BR,W_{\coW q}^s)$. 
Consequently, 
\hb{(\pl_t+\cA_\eta)^{-1}f=V*f}, which proves the assertion in this case. 

\smallskip 
(4) 
Assume 
\hb{q=\iy} and 
\hb{f\in buc^{s/\vec r}}. We see from 
\hb{buc^{(s+r)/\vec r}\sdh buc^{s/\vec r}} that there exists 
a~sequence~$(f_j)$ in~$buc^{(s+r)/\vec r}$ converging in~$buc^{s/\vec r}$ , 
hence, once more by Theorem~\ref{thm-B.I}, in~$\BUC(\BR,buc^s)$, 
towards~$f$. Since $f_j$~belongs to $\BUC(\BR,buc^{s+r})$ by 
Theorem~\ref{thm-B.I}, we get from step~(2) that 
\hb{(\pl_t+\cA_\eta)^{-1}f_j} equals 
\hb{V*f_j} for 
\hb{j\in\BN}. This implies 
\hb{(\pl_t+\cA_\eta)^{-1}f=V*f} by the arguments of the preceding step. 
\end{proof} 
\begin{cor}\label{cor-S.S} 
Let $(s,q)$ be \hbox{$r$-admissible}. Let either \eqref{B.ns} be satisfied 
or suppose 
\hb{0\leq s<r/q} and set 
\hb{\gF_q^{s/\vec r}\cBHE:=\gF_q^{s/\vec r}\BHE}. Then 
$$ 
\gR\circ(\pl_t+\cA_\eta)^{-1}\circ\ci\gE 
\in\cL\bigl(\gF_{q;\eta}^{s/\vec r}\cBHE, 
\gF_{q;\eta}^{(s+r)/\vec r}\cBHE\bigr) 
\npb 
$$ 
\hbox{$\eta$-uniformly}. 
\end{cor}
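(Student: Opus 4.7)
The plan is to read the composition $\gR\circ(\pl_t+\cA_\eta)^{-1}\circ\ci\gE$ as a concatenation of three $\eta$-uniformly bounded maps,
\[
\gF_{q;\eta}^{s/\vec r}(\ci\BH,E)
\xrightarrow{\ci\gE}\vph{\gF}_0\gF_{q;\eta}^{s/\vec r}
\xrightarrow{(\pl_t+\cA_\eta)^{-1}}\vph{\gF}_0\gF_{q;\eta}^{(s+r)/\vec r}
\xrightarrow{\gR}\gF_{q;\eta}^{(s+r)/\vec r}(\ci\BH,E),
\]
and to verify $\eta$-uniform continuity of each arrow separately. The first arrow is immediate: Corollary~\ref{cor-B.E}(ii) when~\eqref{B.ns} holds, and Corollary~\ref{cor-B.E}(iii) when $0\le s<r/q$, both place $\ci\gE$ $\eta$-uniformly into the causal subspace $\vph{\gF}_0\gF_{q;\eta}^{s/\vec r}$, which is closed in $\gF_{q;\eta}^{s/\vec r}$ by Corollary~\ref{cor-B.E}(i).

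For the middle arrow, Theorem~\ref{thm-H.P} already provides the $\eta$-uniform bound for $(\pl_t+\cA_\eta)^{-1}$ from $\gF_{q;\eta}^{s/\vec r}$ to $\gF_{q;\eta}^{(s+r)/\vec r}$; what is still needed is causality, i.e., preservation of the property of vanishing for $t<0$. This reads off directly from the convolution representation~\eqref{S.int} in Proposition~\ref{pro-S.S}: if $f(\tau)=0$ for a.a.\ $\tau<0$, then for every $t<0$ the integrand $e^{-(t-\tau)\cA_\eta}f(\tau)$ is identically zero on $(-\iy,t)$, so $(\pl_t+\cA_\eta)^{-1}f(t)=0$. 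Since the target causal subspace is closed in $\gF_{q;\eta}^{(s+r)/\vec r}$, the restriction inherits the same $\eta$-uniform norm estimate.

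The last arrow is $\eta$-uniformly continuous from $\gF_{q;\eta}^{(s+r)/\vec r}$ to $\gF_{q;\eta}^{(s+r)/\vec r}(\BH,E)$ by Theorem~\ref{thm-B.E}(i). The step I expect to demand the most care is verifying that it actually lands in the smaller space $\gF_{q;\eta}^{(s+r)/\vec r}(\ci\BH,E)$, which is cut out by vanishing of the traces $\vec\ga^{k'}$, with $k'$ determined by $s+r$ via~\eqref{B.ns} (in case~(iii) simply $k'=0$, as $s+r\in(\nu_\ell/q,\nu_\ell(1+1/q))$). The natural route is to invoke Theorem~\ref{thm-B.E}(ii): on $\im(\ci\gE)$ one has $\gR=\ci\gR$, and $\ci\gR$ by construction lands in $\gF_{q;\eta}^{(s+r)/\vec r}(\ci\BH,E)$. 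The identity $\im(\ci\gE)=\vph{\gF}_0\gF_{q;\eta}^{(s+r)/\vec r}$—which follows because the projection $\ci\gE\circ\ci\gR$ takes values in the causal subspace and fixes each of its elements (by density of test functions supported in $\BR^m\times(0,\iy)$, on which both sides visibly agree)—then transfers the conclusion $\gR u\in\gF_{q;\eta}^{(s+r)/\vec r}(\ci\BH,E)$ to every $u\in\vph{\gF}_0\gF_{q;\eta}^{(s+r)/\vec r}$. Concatenating the three $\eta$-uniform bounds closes the proof.
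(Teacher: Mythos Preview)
Your overall architecture matches the paper's: extend by zero, invert on the full line using Theorem~\ref{thm-H.P}, invoke the convolution formula~\eqref{S.int} to get causality, then restrict. The divergence is in the final step, where you must show that $\gR u$ lands in $\gF_{q;\eta}^{(s+r)/\vec r}(\ci\BH,E)$.

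The paper handles this directly via a Sobolev embedding: from Theorem~\ref{thm-B.I} and the one-dimensional embedding $\gF_q^{(s+r)/r}\hookrightarrow C^{k+1}$ (valid since $(s+r)/r>k+1+1/q$, with $k:=-1$ if $s<r/q$) one gets
\[
\gF_q^{(s+r)/\vec r}\hookrightarrow C^{k+1}\bigl(\BR,L_q(\BR^m,E)\bigr),
\]
so $u$ is classically $C^{k+1}$ in~$t$; vanishing for $t<0$ then forces $\pl_t^j u(0)=0$ for $0\le j\le k+1$, which is exactly $\vec\ga^{k'}(\gR u)=0$ with $k'=k+1$.

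Your route instead tries to prove $\im(\ci\gE)=\vph{\gF}_0\gF_{q;\eta}^{(s+r)/\vec r}$ by showing the projection $\ci\gE\circ\ci\gR$ is the identity on the causal subspace, appealing to density of test functions supported in $\BR^m\times(0,\iy)$. That density claim is the gap: it is not established anywhere in the paper, and it is not a triviality---for $q=\iy$ in particular (little H\"older spaces) the approximation of a function vanishing on $t\le0$ by smooth functions supported strictly in $t>0$, \emph{in the $buc^{(s+r)/\vec r}$ norm}, requires a genuine argument that you have not supplied. The embedding approach sidesteps this entirely and is both shorter and more robust.
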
 
\begin{proof} 
It follows from Corollary~\ref{cor-B.E} and Theorem~\ref{thm-H.P} that 
$$ 
(\pl_t+\cA_\eta)^{-1}\circ\ci\gE 
\in\cL\bigl(\gF_{q;\eta}^{s/\vec r}\cBHE, 
\gF_{q;\eta}^{(s+r)/\vec r}\bigr) 
\quad\text{$\eta$-uniformly}. 
$$ 
Given 
\hb{\ci\gE f\in\vph{\gF}_0\gF_q^{s/\vec r}}, we read off \eqref{S.int} that  
\begin{equation}\label{S.f} 
u(t):=(\pl_t+\cA_\eta)^{-1}\circ\ci\gE f(t)=0 
\qb \text{a.a. }t<0. 
\end{equation} 
Note that 
\hb{(s+r)/r>1+k+1/q}, where 
\hb{k:=-1} if 
\hb{s<r/q}. Hence Theorem~\ref{thm-B.I} and the (Banach-space-valued) Sobolev 
embedding theorem imply 
$$ 
\gF_q^{(s+r)/\vec r} 
\hr\gF_q^{(s+r)/r}\bigl(\BR,L_q\RmE\bigr) 
\hr C^{k+1}\bigl(\BR,L_q\RmE\bigr). 
\npb 
$$ 
From this and \eqref{S.f} we infer that 
\hb{\vec\ga^{k+1}u=0}. Now the claim follows. 
\end{proof} 
\section{Cauchy Problems\label{sec-C}} 
\extraindent 
Now we turn to the Cauchy problem 
$$ 
(\pl_t+\cA_\eta)u=f\text{ on }\BH 
\qb \ga u=u_0\text{ on }\pl\BH, 
\npb 
$$ 
retaining assumption~\eqref{H.ass}. 
\begin{thm(A)}\label{thm-C.C} 
Let $(s,q)$ be \hbox{$r$-admissible}. Then 
$$ 
(\pl_t+\cA_\eta,\ga) 
\in\Lis\bigl(\gF_{q;\eta}^{(s+r)/\vec r}\BHE,\gF_{q;\eta}^{s/\vec r}\BHE 
\times\gF_{q;\eta}^{s+r(1-1/q)}\bigr)
$$ 
\hbox{$\eta$-uniformly} \emph{with \hbox{$c(\bka)$-bounds}}, that is, 
\hb{(\pl_t+\cA_\eta,\ga_0)} and 
\hb{(\pl_t+\cA_\eta,\ga_0)^{-1}} are bounded by~$c(\bka)$, uniformly 
with respect to 
\hb{\eta>0}. 
\end{thm(A)} 
\begin{proof} 
(1) 
We write $M_\eta$, resp.~$L_\eta$, for 
\hb{\pl_t+\cA_\eta} if this operator is considered on 
\hb{\BR^m\times\BR}, resp.~$\BH$. Let $\gRgE$ be the \hbox{r-e} pair of 
Theorem~\ref{thm-B.E} for 
\hb{d=m+1}. Then Theorems \ref{thm-B.E}, \ref{thm-H.P}, and~\ref{thm-B.T} 
imply 
$$ 
(\gR\circ M_\eta\circ\gE,\ga) 
\in\cL\bigl(\gF_{q;\eta}^{(s+r)/\vec r}\BHE,\gF_{q;\eta}^{s/\vec r}\BHE 
\times\gF_{q;\eta}^{s+r(1-1/q)}\bigr)
$$ 
\hbox{$\eta$-uniformly} with \hbox{$c(\bka)$-bounds}. Since 
$\gR$~commutes with $\pa$ and~$\pl_t$, we see 
$$ 
L_\eta=\gR\circ M_\eta\circ\gE. 
$$ 

\smallskip 
(2) 
Let 
\hb{k\in\BN} and suppose  
\begin{equation}\label{C.ks} 
r(k+1/q)<s<r(k+1+1/q) 
\qa s\notin(\BN+1/q)\cup(\BN+r/q). 
\end{equation}\goodbreak  
\noindent 
Then 
\hb{s+r(1-j-1/q)} is \hbox{$r$-admissible} and, by Theorem~\ref{thm-B.F}(ii), 
\begin{equation}\label{C.A} 
\cA_\eta 
\in\cL(\gF_{q;\eta}^{s+r(1-j-1/q)},\gF_{q;\eta}^{s-r(j+1/q)}) 
\npb 
\end{equation} 
\hbox{$\eta$-uniformly} with \hbox{$c(\sfa)$-bounds} for 
\hb{0\leq j\leq k}. 

\smallskip 
Suppose 
\hb{u\in\gF_q^{(s+r)/\vec r}\BHE} and set 
\hb{f:=L_\eta u}. Then we get from Theorem~\ref{thm-B.T} and~\eqref{C.A} 
\begin{equation}\label{C.g1} 
\ga^{j+1}u=\pl_t^{j+1}u(0)=\pl_t^jf(0)-\cA_\eta\pl_t^ju(0) 
\in\gF_q^{s-r(j+1/q)} 
\end{equation} 
and 
\begin{equation}\label{C.j} 
\|\ga^{j+1}u\|_{\gF_{q;\eta}^{s-r(j+1/q)}} 
\leq c(\sfa)\,\|(f,\ga u)\| 
_{\gF_{q;\eta}^{s/\vec r}\BHE\times \gF_{q;\eta}^{s+r(1-1/q)}} 
\npb 
\end{equation} 
for 
\hb{0\leq j\leq k} and 
\hb{\eta>0}. 

\smallskip 
Assume 
\hb{(L_\eta u,\ga u)=(0,0)}. Then we see from \eqref{C.ks} and \eqref{C.j} 
that $u$ belongs to $\gF_q^{(s+r)/\vec r}\cBHE$. Hence its trivial extension 
\hb{\tilde u:=\ci\gE u} lies in~$\gF_q^{(s+r)/\vec r}$ and satisfies 
\hb{M_\eta\tilde u=0}. Consequently, 
\hb{\tilde u=0} by Theorem~\ref{thm-H.P}. Thus, taking 
Theorem~\ref{thm-B.E}(ii) into consideration, 
\hb{u=\gR\tilde u=0}. This shows that $(L_\eta,\ga)$ is injective. 

\smallskip 
(3) 
Keeping assumption~\eqref{C.ks}, we let 
\hb{(f,u_0)\in\gF_q^{s/\vec r}\BHE\times\gF_q^{s+r(1-1/q)}}. 
Define~$u_{j;\eta}$ for 
\hb{0\leq j\leq k} inductively by 
$$ 
u_{0;\eta}:=u_0 
\qb u_{j+1;\eta}:=\ga^jf-\cA_\eta u_{j;\eta}. 
$$ 
It follows from \eqref{C.A} and Theorem~\ref{thm-B.T}(i) that 
$$ 
u_{j;\eta}\in\gF_{q;\eta}^{s-r(j+1/q)} 
\qa 0\leq j\leq k, 
\npb 
$$ 
\hbox{$\eta$-uniformly} with \hbox{$c(\sfa)$-bounds}. 

\smallskip 
We set 
$$ 
\vec\gF_q^{s+r(1-1/q)}:=\prod_{j=0}^k\gF_q^{s-r(j+1/q)} 
\qb V_{\coV\eta}(f,u_0):=(u_{0;\eta},\ldots,u_{k;\eta}). 
$$ 
Then 
$$ 
V_{\coV\eta}\in\cL\bigl(\gF_{q;\eta}^{s/\vec r}\BHE 
\times\gF_{q;\eta}^{s+r(1-1/q)},\vec\gF_q^{s+r(1-1/q)}\bigr)
$$ 
\hbox{$\eta$-uniformly} with \hbox{$c(\sfa)$-bounds}. 
Theorem~\ref{thm-B.T}(i) 
guarantees the existence of an \hbox{$\eta$-uniform} coretraction 
$(\vec\ga^k)^c$ for the trace operator 
$$ 
\vec\ga^k\in\cL\bigl(\gF_{q;\eta}^{(s+r)/\vec r}(\BH), 
\vec\gF_{q;\eta}^{s+r(1-1/q)}\bigr). 
$$ 
Hence 
$$
W_{\coW\eta}:=(\vec\ga)^c\circ V_{\coV\eta} 
\in\cL\bigl(\gF_{q;\eta}^{s/\vec r}\BHE 
\times\gF_{q;\eta}^{s+r(1-1/q)},   
\gF_{q;\eta}^{(s+r)/\vec r}\BHE\bigr) 
\npb 
$$ 
\hbox{$\eta$-uniformly} with \hbox{$c(\sfa)$-bounds}. 

\smallskip 
Let 
$$ 
w_\eta:=W_{\coW\eta}(f,u_0) 
\qb g_\eta:=f-L_\eta w_\eta. 
$$ 
Then 
\hb{w_\eta\in\gF_q^{(s+r)/\vec r}\BHE} and \eqref{C.g1} imply 
$$ 
\ga^jg_\eta=\ga^jf-\ga^jL_\eta w_\eta 
=\ga^jf-\ga^{j+1}w_\eta-\cA_\eta\ga^jw_\eta=0 
\qa 0\leq j\leq k. 
$$ 
Hence 
\hb{v_\eta:=\gR\circ M_\eta^{-1}\circ\ci\gE g_\eta 
   \in\gF^{(s+r)/\vec r}\cBHE} by Corollary~\ref{cor-S.S}. 
The second part of Theorem~\ref{thm-B.E}(i) implies  
\hb{L_\eta\circ\gR=\gR\circ M_\eta}. Consequently, 
$$ 
L_\eta v_\eta=\gR\circ\ci\gE g_\eta=f-L_\eta w_\eta 
\qa \ga v_\eta=0. 
$$ 
Hence 
\hb{u_\eta:=v_\eta+w_\eta} satisfies 
\hb{L_\eta u_\eta=f} on 
\hb{M\times\BR^+} and 
\hb{\ga u_\eta=u_0}. This shows that $(L_\eta,\ga)$ is surjective, 
thus bijective, and 
$$ 
(L_\eta,\ga)^{-1}(f,u_0) 
=\gR\circ M_\eta^{-1}\circ\ci\gE\bigl(f-L_\eta W_{\coW\eta}(f,u_0)\bigr) 
+W_{\coW\eta}(f,u_0). 
\npb 
$$ 
This implies the assertion in this case. 

\smallskip 
(4) 
Assume 
\hb{0\leq s<r/q}. In this case analogous arguments result in 
$$ 
(L_\eta,\ga)(f,u_0) 
=\gR\circ M_\eta^{-1}\circ\ci\gE(f-L_\eta\ga^cu_0)+\ga^cu_0. 
\npb 
$$ 
Thus the claim holds in this case too. 

\smallskip 
(5) 
Suppose 
\hb{s\in(\BN+1/q)\cup(\BN+r/q)}. We fix 
\hb{s_0<s<s_1} such that $(s_0,q)$ and $(s_1,q)$ are \hbox{$r$-admissible} 
and 
\hb{s_0,s_1\notin(\BN+1/q)\cup(\BN+r/q)}. Then, setting 
\hb{\ta:=(s-s_0)/(s_1-s_0)}, the assertion follows by interpolation, due to 
Theorem~\ref{thm-B.In}, from what has just been shown. The theorem is proved. 
\end{proof} 
\section{Localizations of Function Spaces\label{sec-L}} 
\extraindent 
We assume that the topological space underlying~$M$ is separable and 
metrizable. Let 
\hb{Q:=(-1,1)\is\BR}. If $\ka$~is a local chart for~$M$, then we 
write~$U_{\coU\ka}$ for the
corresponding coordinate patch~$\dom(\ka)$.
A~local chart~$\ka$ is \emph{normalized} if
\hb{\ka(U_{\coU\ka})=Q^m} whenever
\hb{U_{\coU\ka}\is\ci{M}}, the interior of~$M$, and 
\hb{\ka(U_{\coU\ka})=Q^m\cap\BH^m} if
\hb{U_{\coU\ka}\cap\pl M\neq\es}. 
 
\smallskip  
An atlas~$\gK$ for~$M$ has \emph{finite multiplicity} if there exists
\hb{k\in\BN} such that any intersection of more than $k$ coordinate
patches is empty. In this case 
$$ 
\gN(\ka):=\{\,\tk\in\gK 
\ ;\ U_{\coU\tk}\cap U_{\coU\ka}\neq\es\,\}
$$
has cardinality~%
\hb{\leq k} for each 
\hb{\ka\in\gK}. An atlas is \emph{shrinkable} if it consists of
normalized charts and there exists
\hb{r\in(0,1)} such that
\hb{\big\{\,\ka^{-1}\bigl(r\ka(U_{\coU\ka})\bigr)\ ;\ \ka\in\gK\,\big\}} 
is a cover of~$M$.

\smallskip 
$\Mg$~is a \hh{uniformly regular Riemannian manifold} if 
\begin{equation}\label{L.ur} 
\left. 
\begin{split} 
\rm{(i)}\quad    
&\text{it possesses a shrinkable atlas $\gK$ of finite multiplicity}\cr 
\noalign{\vskip-1\jot} 
&\text{which is orientation preserving if $M$ is oriented}.\cr
\rm{(ii)}\quad    
&\|\tk\circ\ka^{-1}\|_{k,\iy}\leq c(k),
                    \ \ka,\tk\in\gK,\ k\in\BN.\cr
\rm{(iii)}\quad    
&\ka_*g\sim g_m,\ \ka\in\gK.\cr
\rm{(iv)}\quad    
&\|\ka_*g\|_{k,\iy}\leq c(k),\ \ka\in\gK,\ k\in\BN.  
\end{split} 
\right.  
\end{equation}  
In~(ii) and in similar situations it is understood that only 
\hb{\ka,\tk\in\gK} with 
\hb{U_{\coU\ka}\cap U_{\coU\tk}\neq\es} are being considered. 
Here and below, we employ the standard definitions of push-forward and 
pull-back operators. An atlas satisfying \eqref{L.ur}(i) and~(ii) is called 
\emph{uniformly regular}. Henceforth, it is assumed that 
$$ 
\bal 
\bt\quad 
&\Mg\text{ is a uniformly regular Riemannian manifold without boundary}\cr 
\noalign{\vskip-1\jot} 
&\text{and $\gK$ is an atlas possessing properties \eqref{L.ur}}. 
\eal 
$$ 
Observe that $\gK$~is countable. A~\emph{localization system} for~$M$ 
subordinate to~$\gK$ is a family 
\hb{\bigl\{\,(\pi_\ka,\chi_\ka)\ ;\ \ka\in\gK\,\bigr\}} such that 
\begin{equation}\label{L.LS} 
\left. 
\begin{split} 
\rm{(i)}\quad    
&\pi_\ka\in\cD\bigl(U_{\coU\ka},[0,1]\bigr)\text{ and }
 \{\,\pi_\ka^2\ ;\ \ka\in\gK\,\}\text{ is a partition of unity}\cr
\noalign{\vskip-1\jot} 
&\text{ on $M $ subordinate to the covering }
 \{\,U_{\coU\ka}\ ;\ \ka\in\gK\,\}.\cr
\rm{(ii)}\quad    
&\chi_\ka=\ka^*\chi\text{ with }\chi\in\cD\bigl(Q^m,[0,1]\bigr)\cr 
\noalign{\vskip-1\jot} 
&\text{ and $\chi\sn\tsupp(\ka_*\pi_\ka)=\mf{1}$ for }\ka\in\gK.\cr
\rm{(iii)}\quad    
&\|\ka_*\pi_\ka\|_{k,\iy}+\|\ka_*\chi_\ka\|_{k,\iy}\leq c(k),
                     \ \ka\in\gK,\ k\in\BN.
\end{split} 
\right. 
\npb 
\end{equation}  
Lemma~3.2 of~\cite{Ama12b} guarantees the existence of such systems. 

\smallskip 
Using 
\hb{T_pQ^m=\BR^m} for 
\hb{p\in Q^m} we get  
$$ 
T_\tau^\sa Q^m\otimes F 
=Q^m\times\bigl((\BR^m)^{\otimes\sa}\otimes\Rm^{*\otimes\tau} 
\otimes F\bigr). 
$$ 
Of course, we identify~$\Rm^*$ canonically with~$\BR^m$, 
but for clarity we continue to denote it by~$\Rm^*$. We endow 
\hb{T_\tau^\sa Q^m\otimes F} with the inner product 
\begin{equation}\label{L.ip} 
\prsn_{T_\tau^\sa Q^m\otimes F}
:=\prsn^{\otimes\sa}\otimes\prsn^{\otimes\tau} 
\otimes\prsn_F. 
\end{equation} 
The standard basis 
\hb{(e_1,\ldots,e_m)} of~$\BR^m$ and its dual basis 
\hb{(\ve^1,\ldots,\ve^m)} of $\Rm^*$ induce the coordinate frame 
$$ 
\bigl\{\,e_{(i)}\otimes\ve^{(j)}
\ ;\ (i)\in\BJ_\sa,\ (j)\in\BJ_\tau\,\bigr\} 
$$ 
on~$T_\tau^\sa Q^m$, where 
\hb{e_{(i)}:=e_{i_1}\otimes\cdots\otimes e_{i_\sa}}, etc.  
Then 
$$ 
u\in(T_\tau^\sa Q^m\otimes F)_p 
=\cL\bigl((\Rm^*)^{\otimes\sa}\otimes\Rm^{\otimes\tau},F\bigr) 
$$ 
has the matrix representation 
\hb{\big[u_{(j)}^{(i)}\big]\in F^{m^\sa\times m^\tau}}. If 
\hb{n=0}, then 
\hb{F=\BR}. We endow~$F^{m^\sa\times m^\tau}$ with the inner product 
$$ 
\bigl(\big[u_{(j)}^{(i)}\big] 
\bsn\big[v_{(\tilde\jmath)}^{(\tilde\imath)}\big]\bigr)_{\HS,F} 
:=\sum_{(i)\in\BJ_\sa,\,(j)\in\BJ_\tau} 
\big(u_{(j)}^{(i)}\bsn v_{(j)}^{(i)}\big)_F. 
$$ 
It coincides with the Hilbert--Schmidt inner product if 
\hb{F=\BR}. From now on, by~$E$ we always mean 
\hb{\bigl(E,\prsn_E\bigr)}, where 
$$  
\bt\quad 
E=E_\tau^\sa=E_\tau^\sa(F):=F^{m^\sa\times m^\tau} 
\qb \prsn_E:=\prsn_{\HS,F}. 
$$ 
It follows from \eqref{L.ip} that 
\hb{u\mt\big[u_{(j)}^{(i)}\big]} defines an isometric isomorphism by which 
\begin{equation}\label{L.TE} 
\text{we identify } 
T_\tau^\sa Q^m\otimes F\text{ with }Q^m\times E. 
\end{equation} 

\smallskip 
Given Banach spaces $X_0$,~$X_1$ and 
\hb{j\in\thBN}, we denote by~$\cL^j(X_0;X_1)$ the Banach space of all 
\hbox{$j$-linear} maps from 
\hb{X_0\times\cdots\times X_0} ($j$~copies) into~$X_1$, and 
\hb{\cL^0(X_0;X_1):=X_1}. 

\smallskip 
Suppose $v$~is a \hbox{$C^j$-section} of 
\hb{Q^m\times E}, that is, 
\hb{v\in C^j\QmE}. Then 
$$ 
\pl^jv\in C\bigl(Q^m,\cL^j\RmE\bigr)=C(Q^m,E_{\tau+j}^\sa), 
\npb 
$$ 
using canonical identifications. 

\smallskip 
Let 
\hb{\ka\in\gK}. Suppose 
\hb{u\in C(V)}. Denote by~$\big[u_{(k)}^{(j)}\big]$ the representation 
of~$u$ on~$U_{\coU\ka}$ with respect to the coordinate frame~\eqref{I.b}. 
Then 
$$ 
\ka_*u 
:=\big[\ka_*u_{(k)}^{(j)}\big]=\big[u_{(k)}^{(j)}\circ\ka^{-1}\big] 
\in C\QmE. 
$$ 
The push-forward of 
\hb{\na^j\sco C^j(V)\ra C(V_{\tau+j}^\sa)} is defined by 
$$ 
(\ka_*\na^j)v:=\ka_*\bigl(\na^j(\ka^*v)\bigr) 
\qa v\in C^j\QmE. 
$$ 
Then $\ka_*\na$~is a metric connection on
\hb{Q^m\times E} which satisfies 
\begin{equation}\label{L.Nj} 
\ka_*\na^jv=\pl^jv+\sum_{i=0}^{j-1}b_{j,i}^\ka\pl^iv 
\qa v\in C^j\QmE, 
\end{equation} 
with 
\hb{b_{j,i}^\ka\in C^\iy\bigl(Q^m,\cL(E_{\tau+i}^\sa,E_{\tau+j}^\sa\bigr)} 
and 
$$ 
\|b_{j,i}^\ka\|_{k,\iy}\leq c(j,k) 
\qa 0\leq i\leq j-1 
\qb j,k\in\BN 
\qb \ka\in\gK, 
\npb 
$$ 
(see the proof of \cite[Lemma~3.1]{Ama12b}). 

\smallskip 
Considering~$\gK$ as an index set endowed with the discrete topology, we set 
\hb{\mf{\gF}_q^s:=C(\gK,\gF_q^s)}, the space of all `sequences' 
in~$\gF_q^s$ `enumerated' by~$\gK$, and 
$$ 
\ell_q(\gF_q^s):=\ell_q(\gK,\gF_q^s) 
\qa 1\leq q\leq\iy. 
$$ 
If 
\hb{k<s<k+1} with 
\hb{k\in\BN}, then $\ell_{\iy,\unif}(buc^s)$ is the closed linear subspace 
of~$\ell_\iy(\gF_\iy^s)$ of all 
\hb{\mfv=(v_\ka)} such that 
\hb{\lim_{\da\ra0}\max_{|\al|=k}[\pa v_\ka]_{s-k,\iy}^\da=0}, 
uniformly with respect to 
\hb{\ka\in\gK}. 

\smallskip 
Now we fix a localization system for~$M$. Then we define 
$$ 
\cR\mfu:=\sum_\ka\pi_\ka\ka^*u_\ka 
\qb \cR^cu:=\bigl(\ka_*(\pi_\ka u)\bigr)_{\ka\in\gK} 
$$ 
for 
\hb{\mfu=(u_\ka)\in\mf{\gF}_q^s} and 
\hb{u\in L_{1,\loc}(V_{\coV\tau}^\sa)}, whenever the series is absolutely 
convergent. In the following, we often identify functions with 
multiplication operators. 
\begin{thm(A)}\label{thm-L.l} 
Suppose 
\hb{s\in\BR^+} and 
\hb{1\leq q<\iy}. Then $\cRcRc$ is an \hbox{r-e} pair for 
$$ 
\bigl(\ell_q(W_{\coW q}^s),W_{\coW q}^s(V)\bigr), 
\ \bigl(\ell_\iy(\BUC^s),BC^s(V)\bigr),\text{ and }
\bigl(\ell_{\iy,\unif}(buc^s),bc^s(V)\bigr), 
\npb 
$$ 
provided 
\hb{s\notin\BN} in the last instance. 
\end{thm(A)} 
\begin{proof} 
\cite[Theorem~6.1]{Ama12b} and \cite[Theorem~12.5]{Ama12c}. 
\end{proof} 
The next theorem shows that, similarly as in the compact case, general 
uniformly regular Riemannian manifolds can be characterized by means of local 
coordinates. 
\begin{thm(A)}\label{thm-L.n} 
Suppose 
\hb{s\in\BR^+} and 
\hb{1\leq q<\iy}. Then 
$$ 
u\mt\Bigl(\sum_{\ka\in\gK}\|\ka_*u\|_{W_{\coW q}^s\QmE}^q\Bigr)^{1/q} 
$$ 
is a norm for~$W_{\coW q}^s(V)$, and 
$$ 
u\mt\sup_{\ka\in\gK}\|\ka_*u\|_{BC^s\QmE} 
\npb 
$$ 
is one for~$BC^s(V)$. 

\smallskip 
If 
\hb{k<s<k+1} with 
\hb{k\in\BN}, then 
\hb{u\in buc^s(V)} iff 
\hb{u\in BC^k(V)} and 
$$ 
\lim_{\da\ra0}\bigl[\pa(\ka_*u)\bigr]_{s-k,\iy}^\da=0 
\qa \al\in\BN^m 
\qb |\al|=k, 
\npb 
$$ 
uniformly with respect to 
\hb{\ka\in\gK}. 
\end{thm(A)} 
\begin{proof} 
(1) 
We set 
\hb{S_\ktk:=\ka_*\circ\tk^*\circ\chi} for 
\hb{\ka,\tk\in\gK}. If 
\hb{s\in\BN}, then it is a consequence of \eqref{L.ur}(ii) and the chain 
rule that 
\begin{equation}\label{L.Sk} 
S_\ktk\in\cL(W_{\coW q}^s)\cap\cL(BC^s), 
\end{equation} 
uniformly with respect to 
\hb{\ka\in\gK} and 
\hb{\tk\in\gN(\ka)}. From this we obtain \eqref{L.Sk} for 
\hb{s\notin\BN} by interpolation with the real interpolation functor~%
\hb{\pr_{\ta,q}}, respectively~%
\hb{\pr_{\ta,\iy}} in the case of $BC$ spaces. 

\smallskip 
Since 
\hb{\|\tk\circ\ka^{-1}\|_{k+1,\iy}\leq c(k)} for 
\hb{\ka\in\gK} and 
\hb{\tk\in\gN(\ka)}, the mean-value theorem implies that 
\hb{\pl^k(\tk\circ\ka^{-1})} is uniformly Lipschitz continuous, 
uniformly with respect 
\hb{\ka\in\gK} and 
\hb{\tk\in\gN(\ka)}. From this we get 
\hb{S_\ktk\in\cL(\BUC^k)} for 
\hb{k\in\BN}, uniformly with respect to 
\hb{\ka\in\gK} and 
\hb{\tk\in\gN(\ka)}. Now, given 
\hb{s\in\BR^+\ssm\BN}, we deduce by continuous interpolation 
\begin{equation}\label{L.Skk} 
S_\ktk\in\cL(buc^s), 
\npb 
\end{equation} 
uniformly with respect 
\hb{\ka\in\gK} and 
\hb{\tk\in\gN(\ka)}. 

\smallskip 
(2) 
Using 
\hb{\sum_\ka\pi_\ka^2=1} we find, due to 
\hb{\chi_\ka\pi_\ka=\pi_\ka},  
\begin{equation}\label{L.ku} 
\ka_* u=\sum_\tk\ka_*(\pi_\tk^2u) 
=\sum_{\tk\in\gN(\ka)}(\ka_*\pi_\tk) 
S_\ktk\bigl(\tk_*(\pi_\tk u)\bigr) 
\end{equation} 
for 
\hb{u\in C(V)} and 
\hb{\ka\in\gK}. Observing 
\hb{\ka_*\pi_\tk=S_\ktk(\tk_*\pi_\tk)}, we infer from 
\eqref{L.LS}(iii) and step~(1) 
$$ 
\|\ka_*\pi_\tk\|_{\ell,\iy}\leq c(\ell) 
\qa \ka\in\gK 
\qb \tk\in\gN(\ka) 
\qb \ell\in\BN. 
$$ 
From this, \eqref{L.Sk}, \,\eqref{L.ku}, and Theorem~\ref{thm-L.l} 
it follows 
\begin{equation}\label{L.Ws} 
\Bigl(\sum_\ka\|\ka_*u\|_{W_{\coW q}^s\QmE}^q\Bigr)^{1/q} 
\leq c\,\|\cR^cu\|_{\ell_q(W_{\coW p}^s)} 
\end{equation}  
and 
\begin{equation}\label{L.Bs} 
\sup_\ka\|\ka_*u\|_{BC^s\QmE} 
\leq c\,\|\cR u\|_{\ell_\iy(BC^s)}. 
\end{equation}  
On the other hand, 
\hb{\ka_*(\pi_\ka u)=(\ka_*\pi_\ka)\ka_*u} and \eqref{L.LS}(iii) imply 
$$ 
\|\ka_*(\pi_\ka u)\|_{W_{\coW q}^s\RmE} 
\leq c(k)\,\|\ka_*u\|_{W_{\coW q}^s\QmE} 
$$ 
and 
$$ 
\|\ka_*(\pi_\ka u)\|_{BC^s\RmE} 
\leq c(k)\,\|\ka_*u\|_{BC^s\QmE} 
$$ 
for 
\hb{\ka\in\gK}, 
\ \hb{k\in\BN}, and 
\hb{0\leq s\leq k}. Consequently, the left-hand sides of \eqref{L.Ws} 
and \eqref{L.Bs} can be bounded from below by 
\hb{c^{-1}\,\|\cR^cu\|_{\ell_q(W_{\coW q}^s)}}, respectively by 
\hb{c^{-1}\,\|\cR^cu\|_{\ell_\iy(BC^s)}}. 

\smallskip 
It follows from Theorem~\ref{thm-L.l} and general properties of 
retractions and coretractions (e.g.,~(7.8) and (7.9) in~\cite{Ama12c}) 
that 
\hb{u\mt\|\cR^cu\|_{\ell_q(W_{\coW q}^s)}} is an 
equivalent norm for $W_{\coW q}^s(V)$ and 
\hb{u\mt\|\cR^cu\|_{\ell_\iy(BC^s)}} is one for~$BC^s(V)$. 
This implies the first part of the assertion. The last one is now a 
consequence of \eqref{L.ku}, \,\eqref{L.Skk}, and Theorem~\ref{thm-L.l}.  
\end{proof} 
\section{Localizations of Elliptic Operators\label{sec-E}} 
\extraindent 
Unless explicitly stated otherwise, it is assumed that 
\begin{equation}\label{E.ass} 
\left. 
\bal 
{\rm(i)} 
&\quad 0<\bs<1.\cr
{\rm(ii)} 
&\quad \cA={\textstyle\sum_{j=0}^r}a_r\btdot\na^r 
 \text{ is $\bs$-regular and}\cr 
\noalign{\vskip-1\jot} 
&\quad \text{uniformly normally $\ve$-elliptic on }\Mg.\cr 
{\rm(iii)} 
&\quad 0\leq s\leq\bs\text{ and $s<\bs$ if }q<\iy.  
\eal 
\quad\right\} 
\npb 
\end{equation} 
Thus we consider low-regularity autonomous problems. We also suppose 
$$ 
\bt\quad 
(s,q)\text{ is $1$-admissible}. 
$$ 

\smallskip 
For 
\hb{\ka\in\gK} we define~$\ka_*\cA$ by 
\hb{(\ka_*\cA)v=\ka_*\bigl(\cA(\ka^*v)\bigr)} for 
\hb{v\in C^r\QmE}. Then  
$$ 
\ka_*\cA=\sum_{j=0}^r(\ka_*a_j)\btdot\ka_*\na^j. 
$$ 
It follows from Theorem~\ref{thm-L.n} that, setting 
\hb{\cL^j:=\cL^j\RmE},  
\begin{equation}\label{E.kj} 
(\ka_*a_j)_{\ka\in\gK} 
\in\ell_{\iy,\unif}\bigl(buc^{\bs}(Q^m,\cL^j)\bigr) 
\qa 0\leq j\leq r. 
\end{equation} 
Note that 
\begin{equation}\label{E.sk} 
\gss(\ka_*\cA)(\cdot,\xi)=\ka_*\bigl(\gss\cA(\cdot,\ka^*\xi)\bigr) 
\qa \xi\in\BR^m. 
\end{equation} 
It is a consequence of \eqref{L.ur} that 
\hb{|\ka^*\xi|_1^0\sim\ka^*\,|\xi|} for 
\hb{\xi\in\Ga(T^*Q^m)} and 
\hb{\ka\in\gK} (cf.~\cite[Lemma~3.1]{Ama12b}). From this, \eqref{E.sk}, 
$$ 
\ka_*\bigl(\gss\cA(\cdot,\ka^*\xi)\bigr) 
=\ka_*\bigl((|\ka^*\xi|_1^0)^r\gss\cA(\cdot,\ka^*\xi/|\ka^*\xi|_1^0)\bigr), 
$$ 
and the uniform normal \hbox{$\ve$-ellipticity} of~$\cA$ we deduce the 
existence of a constant 
\hb{c\geq1} such that, setting 
\hb{\ve_1:=\ve/c}, 
\begin{equation}\label{E.e} 
\left. 
\bal  
{}
&\ka_*\cA\text{ is uniformly normally $\ve_1$-elliptic on }\Qmgm,\cr 
\noalign{\vskip-1\jot} 
&\text{uniformly with respect to }\ka\in\gK. 
\eal 
\quad\right\} 
\end{equation} 
For 
\hb{\da>0} we denote by 
\hb{h_\da\sco\BR^m\ra\da Q^m} the radial retraction. Thus 
\hb{h_\da(x)=x} if 
\hb{x\in\da Q^m}, and 
\hb{h_\da(x)=\da x/|x|_\iy} otherwise. Note that $h_\da$~is uniformly 
Lipschitz continuous with Lipschitz constant~$2$ 
(cf.~\cite[Lemma~19.8]{Ama90g}). We set 
\begin{equation}\label{E.ak1} 
a_\ka:=(\ka_*a_r)\circ h_1 
\qa \ka\in\gK. 
\end{equation} 
Then 
\begin{equation}\label{E.aa} 
a_\ka(x)=\ka_*a_r(x) 
\qa x\in Q^m , 
\end{equation} 
and 
$$ 
a_\ka\in buc^{\bs}(\BR^m,\cL^r) 
\qb \|a_\ka\|_{\bs,\iy}\leq2\,\|a_r\|_{\bs,\iy} 
\qa \ka\in\gK. 
$$ 
These estimates, \eqref{E.kj}, and \eqref{E.e} imply 
\begin{equation}\label{E.a} 
(a_\ka)\in\ell_{\iy,\unif}\bigl(buc^{\bs}(\BR^m,\cL^r)\bigr) 
\end{equation} 
and 
\begin{equation}\label{E.ke} 
a_\ka\btdot\pl^r\text{ is uniformly normally $\ve_1$-elliptic on }\Rmgm, 
\npb 
\end{equation} 
uniformly with respect to 
\hb{\ka\in\gK}. 

\smallskip 
For each~$\al$ in a countable index set~$\sA$ let $E_\al$ and~$F_\al$ be 
Banach spaces. Then 
\hb{\mf{\cL}(\mfE,\mfF):=\prod_\al\prod_\ba\cL(E_\ba,F_\al)}. 
Using obvious matrix notation, we define a linear map 
\hb{\mfA\sco\mfE\ra\mfF} by 
 $$ 
(\mfA\mfu)_\al:={\textstyle\sum_\ba}A_{\al\ba}u_\ba 
\qa \al\in\sA 
\qb [A_{\al\ba}]\in\mf{\cL}(\mfE,\mfF) 
\qb \mfu=(u_\ba)\in\mfE, 
$$ 
whenever these series converge absolutely in~$E_\al$. We often 
identify~$[A_{\al\ba}]$ with~$\mfA$. Furthermore, 
$$ 
\diag[A_\al]:=[A_\al\da_{\al\ba}]\in\mf{\cL}(\mfE,\mfF) 
\qa A_\al\in\cL(E_\al,F_\al), 
\npb 
$$ 
where $\da_{\al\ba}$ is the Kronecker symbol. 

\smallskip 
We fix \hb{q\in[1,\iy]} and set 
$$ 
\BE^s:= 
\left\{ 
\bal 
{}
&\ell_q(\gF_q^s),
 &\quad 1\leq q<\iy,\cr 
&\ell_{\iy,\unif}(\gF_\iy^s),
 &\quad q=\iy. 
\eal 
\right. 
$$ 
It follows from \eqref{E.a} that 
\begin{equation}\label{E.A} 
A:=\diag[A_\ka]:=\diag[a_\ka\btdot\pl^r]\in\cL(\BE^{s+r},\BE^s). 
\end{equation} 
\begin{lem}\label{lem-E.R} 
There exist 
\begin{equation}\label{E.BB} 
B,B'\in\cL(\BE^{s+r-1},\BE^s) 
\end{equation} 
such that 
\begin{equation}\label{E.AR} 
\cA\circ\cR=\cR\circ(A+B) 
\qb \cR^c\circ\cA=(A+B')\circ\cR^c. 
\end{equation} 
\end{lem}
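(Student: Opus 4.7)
In view of Theorem~\ref{thm-L.l}, it suffices to identify, in each chart~$\ka$, the error between $\cA$ and its frozen top-order part $A_\ka=a_\ka\btdot\pl^r$, and to verify that the resulting error operators realize bounded maps between the appropriate sequence spaces. Using the local formula~\eqref{L.Nj} one can expand
\begin{equation*}
\ka_*\cA=(\ka_*a_r)\btdot\pl^r+R_\ka,
\end{equation*}
where $R_\ka$ is a differential operator of order~$\leq r-1$ whose coefficients are polynomial expressions in the $\ka_*a_j$ ($0\leq j\leq r$) and the $b_{j,i}^\ka$. By~\eqref{E.kj}, the bounds following~\eqref{L.Nj}, and~\eqref{L.ur}(ii), all such coefficients lie in $buc^{\bs}$ with bounds uniform in~$\ka$. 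Moreover, by~\eqref{E.aa} and~\eqref{L.LS}(ii), $(\ka_*a_r)\cdot\ka_*\pi_\ka=a_\ka\cdot\ka_*\pi_\ka$ on all of~$\BR^m$, so the frozen symbol~$a_\ka$ agrees with $\ka_*a_r$ wherever $\ka_*\pi_\ka$ or any of its derivatives are supported.

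\textbf{First identity.} For fixed $\mfu=(u_\ka)$, push each summand of $\cA(\cR\mfu)=\sum_\ka\cA(\pi_\ka\ka^*u_\ka)$ forward via~$\ka$. Commuting $\pl^r$ past $\ka_*\pi_\ka$ by Leibniz and applying the displayed factorization gives
\begin{equation*}
\ka_*\bigl[\cA(\pi_\ka\ka^*u_\ka)\bigr]=\ka_*\pi_\ka\cdot a_\ka\btdot\pl^ru_\ka+D_\ka u_\ka,
\end{equation*}
in which $D_\ka$ is a differential operator of order $\leq r-1$ with $buc^{\bs}$-bounded coefficients (uniformly in~$\ka$) and $D_\ka u_\ka$ is supported in $\tsupp(\ka_*\pi_\ka)\subset Q^m$. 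The first term pulls back to $\pi_\ka\ka^*(A_\ka u_\ka)$ and yields $\cR(A\mfu)$. To rewrite the remainder as $\cR(B\mfu)$, insert $\sum_\tk\pi_\tk^2=1$ and set
\begin{equation*}
(B\mfu)_\tk:=\sum_{\ka\in\gN(\tk)}\tk_*\bigl[\pi_\tk\cdot\ka^*(D_\ka u_\ka)\bigr],
\end{equation*}
extended by zero from $Q^m$ to~$\BR^m$. Each summand is a composition of the transition pull-back $(\tk\circ\ka^{-1})^*$, multiplication by a smooth cutoff, and the lower-order operator~$D_\ka$; by~\eqref{L.ur}(ii), Theorem~\ref{thm-B.M}, and the uniform $buc^{\bs}$ bounds, it maps $\gF_q^{s+r-1}(\BR^m,E)$ into $\gF_q^s(\BR^m,E)$ uniformly in $\ka,\tk$. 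Since $\card\gN(\tk)$ is uniformly bounded and $\gK$ has finite multiplicity, summation produces $B\in\cL(\BE^{s+r-1},\BE^s)$.

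\textbf{Second identity.} Writing $\pi_\ka\cA u=\cA(\pi_\ka u)-[\cA,\pi_\ka]u$ and pushing forward, the Leibniz cancellation $a_\ka\btdot\pl^r(\ka_*\pi_\ka\cdot\ka_*u)=\ka_*\pi_\ka\cdot a_\ka\btdot\pl^r\ka_*u+(\text{LOT})$ gives
\begin{equation*}
\ka_*(\pi_\ka\cA u)-A_\ka\ka_*(\pi_\ka u)=R_\ka\ka_*(\pi_\ka u)-\ka_*\bigl([\cA,\pi_\ka]u\bigr).
\end{equation*}
The first term on the right is a diagonal operator of order $\leq r-1$ acting on $(\cR^cu)_\ka$. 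For the second, expand $u=\sum_\tk\pi_\tk^2u$ and use the identity $\ka_*(\pi_\tk^2u)=\ka_*\pi_\tk\cdot(\ka\circ\tk^{-1})^*\tk_*(\pi_\tk u)$ on $\ka(U_\ka\cap U_\tk)$ to recast $\ka_*([\cA,\pi_\ka]u)$ as a finite sum $\sum_{\tk\in\gN(\ka)}T_{\ka\tk}(\cR^cu)_\tk$. Each $T_{\ka\tk}$ is a differential operator of order $\leq r-1$ whose coefficients are assembled via the chain rule and Leibniz from the $\ka_*a_j$, derivatives of $\ka_*\pi_\ka$ and $\ka_*\pi_\tk$, and jets of the transition map $\tk\circ\ka^{-1}$; by~\eqref{L.ur}(ii), \eqref{L.LS}(iii), and Theorem~\ref{thm-B.M}, these coefficients are $buc^{\bs}$-bounded uniformly in $\ka,\tk$, and hence $T_{\ka\tk}\in\cL\bigl(\gF_q^{s+r-1}(\BR^m,E),\gF_q^s(\BR^m,E)\bigr)$ uniformly. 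Setting $(B'\mfv)_\ka:=R_\ka v_\ka-\sum_{\tk\in\gN(\ka)}T_{\ka\tk}v_\tk$ and invoking finite multiplicity of~$\gK$ produces $B'\in\cL(\BE^{s+r-1},\BE^s)$. The main technical point throughout is the book-keeping of supports and of uniform $buc^{\bs}$ estimates through the transition maps, which is precisely the content of the uniform regularity axioms \eqref{L.ur} and~\eqref{L.LS}.
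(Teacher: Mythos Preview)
Your proposal is correct and follows essentially the same approach as the paper: isolate the frozen top-order part $A_\ka=a_\ka\btdot\pl^r$ via Leibniz and~\eqref{L.Nj}, collect the resulting order-$(r-1)$ errors, redistribute them over the atlas by inserting $\sum_\tk\pi_\tk^2=1$, and bound the chart-transition pieces uniformly using~\eqref{L.ur}(ii) and~\eqref{L.LS}(iii). The paper carries this out with the explicit transfer operators $S_{\ktk}=\ka_*\circ\tk^*\circ\chi$ and writes the entries $B_{\ktk}$, $B'_{\ktk}$ out in full, whereas you package things as $D_\ka$, $R_\ka$, $T_{\ka\tk}$; the content is the same. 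One small point you glossed over: for $q=\iy$ the target space is $\ell_{\iy,\unif}(buc^s)$, not merely $\ell_\iy$, so you should note (as the paper does in one line) that the uniform-in-$\ka$ little-H\"older moduli are preserved under your $B$ and~$B'$.
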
 
\begin{proof} 
(1) 
We set 
\hb{\na_{\cona\ka}:=\ka_*\na} and denote by~%
\hb{\pe} commutators. Then, given 
\hb{u_\ka\in\gF_q^{s+r}}, 
\begin{equation}\label{E.aN} 
a_j\btdot\na^j(\pi_\ka\ka^*u_\ka)
=\pi_\ka\ka^*(\ka_*a_j\btdot\na_{\cona\ka}^ju_\ka) 
+a_j\btdot[\na^j,\pi_\ka]\,\ka^*u_\ka. 
\end{equation} 
We multiply the last term  with 
\hb{1=\sum_\tk\pi_\tk^2} and use 
\hb{\ka^*u_\ka=\tk^*(S_\tkk u_\ka)}. Then it takes the form 
\begin{equation}\label{E.kN} 
\sum_{\tk\in\gN(\ka)}\pi_\tk\tk^* 
\bigl((\tk_*\pi_\tk)\tk_*a_j\btdot
[\na_{\cona\tk}^j,\tk_*\pi_\ka]S_\tkk u_\ka\bigr). 
\end{equation}   
Note that 
\hb{\tsupp(\ka_*\pi_\ka)\is\chi^{-1}(1)} and \eqref{E.aa} imply 
\begin{equation}\label{E.ak2} 
\left. 
\begin{split} 
\pi_\ka\ka^*(\ka_*a_r\btdot\na_{\cona\ka}^ru_\ka) 
&=\ka^*\bigl((\ka_*\pi_\ka)\ka_*a_r\btdot\na_{\cona\ka}^ru_\ka\bigr)\cr  
&=\ka^*\bigl((\ka_*\pi_\ka)a_\ka\btdot\na_{\cona\ka}^ru_\ka\bigr)\cr 
&=\pi_\ka\ka^*(a_\ka\btdot\pl^ru_\ka) 
 +\pi_\ka\ka^*\bigl(a_\ka\btdot(\na_{\cona\ka}^r-\pl^r)u_\ka\bigr)\cr 
&=\pi_\ka\ka^*(A_\ka u_\ka)+\pi_\ka\ka^* 
 \bigl(a_\ka\btdot(\na_{\cona\ka}^r-\pl^r)u_\ka\bigr). 
\end{split} 
\right.  
\end{equation} 
We put 
$$ 
\bal 
B_\tkk u_\ka 
&:=\da_\tkk 
 \Bigl(a_\ka\btdot(\na_{\cona\ka}^r-\pl^r)u_\ka 
 +\sum_{j=0}^{r-1}\ka_*a_j\btdot\na_{\cona\ka}^ju_\ka\Bigr)\cr 
&\ph{{}:={}} 
 +\sum_{j=0}^r(\tk_*\pi_\tk)\tk_*a_j 
 \btdot[\na_{\cona\tk}^j,\tk_*\pi_\ka]S_\tkk u_\ka 
\eal 
$$ 
for 
\hb{\ka\in\gK} and 
\hb{\tk\in\gN(\ka)}, and 
\hb{B_\tkk:=0} if 
\hb{\tk\notin\gN(\ka)}. It follows from \eqref{E.kj}, \,\eqref{E.a}, 
\,\eqref{L.Nj}, \,\eqref{L.Sk}, \,\eqref{L.Skk}, and \eqref{L.LS} that 
\begin{equation}\label{E.Bkk} 
B_\tkk\in\cL(\gF_q^{s+r-1},\gF_q^s) 
\qb \|B_\tkk\|\leq c 
\qa \ka,\tk\in\gK. 
\end{equation} 
From \hbox{\eqref{E.aN}--\eqref{E.ak2}} 
we get, due to \eqref{L.LS},  
$$ 
\cA(\pi_\ka\ka^*u_\ka) 
=\pi_\ka\ka^*(A_\ka u_\ka) 
+\sum_{\tk\in\gN(\ka)}\pi_\tk\tk_*(B_\tkk u_\ka) 
\qa \ka\in\gK. 
$$ 
Now we sum over 
\hb{\ka\in\gK} and interchange the order of summation in the resulting 
double sum. Then we obtain 
\begin{equation}\label{E.ARu} 
\cA(\cR\mfu)=\cR(A\mfu) 
+\cR\Bigl(\Bigl(\sum_\tk B_\ktk u_\tk\Bigr)_{\ka\in\gK}\Bigr) 
\qa \mfu=(u_\ka)\in\BE^{s+r}. 
\end{equation} 
We set 
\hb{B:=[B_\ktk]}. Let 
\hb{k\in\BN} be such that 
\hb{\card\bigl(\gN(\ka)\bigr)\leq k} for 
\hb{\ka\in\gK}. Then $[B_\ktk]$~has for each 
\hb{\ka\in\gK} at most~$k$ non-zero off-diagonal elements. From this and 
\eqref{E.Bkk} it follows that 
$$ 
B\in\cL\bigl(\ell_q(\gF_q^{s+r-1}),\ell_q(\gF_q^s)\bigr). 
$$ 
If 
\hb{\mfu\in\ell_{\iy,\unif}(\gF_\iy^s)}, then it is verified that 
$B\mfu$~belongs to the same space. This proves \eqref{E.BB} for~$B$. 
The first relation of \eqref{E.AR} follows from \eqref{E.ARu}. 

\smallskip 
(2) 
Similarly as above, 
\begin{equation}\label{E.aj} 
\ka_*(\pi_\ka a_j\btdot\na^ju) 
=\ka_*a_j\btdot\na_{\cona\ka}^j\bigl(\ka_*(\pi_\ka u)\bigr) 
-\ka_*a_j\btdot[\na_{\cona\ka}^j,\ka_*\pi_\ka]\,\ka_*u. 
\end{equation} 
Due to \eqref{L.ku}, the last term can be rewritten as 
\begin{equation}\label{E.S} 
-\sum_{\tk\in\gN(\ka)} 
\ka_*a_j\btdot[\na_{\cona\ka}^j,\ka_*\pi_\ka](\ka_*\pi_\tk) 
S_\ktk\bigl(\tk_*(\pi_\tk u)\bigr). 
\end{equation} 
We put, for 
\hb{u_\tk\in\gF_q^{s+r-1}}, 
\begin{equation}\label{E.Bu} 
\left. 
\begin{split} 
B'_\ktk u_\tk
&:=\da_\ktk\Bigl( 
 a_\ka\btdot(\na_{\cona\ka}^r-\pl^r) u_\tk
 +\sum_{j=0}^{r-1}\ka_*a_j\btdot\na_{\cona\ka}^ju_\tk\Bigr)\cr 
&\ph{{}:={}} 
-\ka_*a_j\btdot[\na_{\cona\ka}^j,\ka_*\pi_\ka](\ka_*\pi_\tk)S_\ktk u_\tk 
\end{split} 
\right.  
\npb 
\end{equation} 
if 
\hb{\ka\in\gK} and 
\hb{\tk\in\gN(\ka)}, and 
\hb{B_\ktk'u_\tk:=0} if 
\hb{\tk\notin\gN(\ka)}. Then 
\hb{B':=[B_\ktk']} satisfies \eqref{E.BB}. Furthermore, 
\hbox{\eqref{E.aj}--\eqref{E.Bu}} 
and \eqref{E.ak2} imply 
$$ 
\ka_*(\pi_\ka\cA u) 
=A_\ka\bigl(\ka_*(\pi_\ka u)\bigr) 
+\sum_\tk B_\ktk'\bigl(\tk_*(\pi_\tk u)\bigr) 
\qa \ka\in\gK. 
\npb 
$$ 
This shows that the second relation of \eqref{E.AR} is also satisfied. 
\end{proof} 
\begin{cor}\label{cor-E.R} 
Suppose 
\hb{0\in\rho(A+B)\cap\rho(A+B')}. Then 
\hb{0\in\rho(\cA)} and 
\begin{equation}\label{E.A1} 
\cA^{-1}=\cR\circ(A+B)^{-1}\circ\cR^c. 
\end{equation} 
\end{cor}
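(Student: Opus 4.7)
The plan is to verify directly that $T:=\cR\circ(A+B)^{-1}\circ\cR^c$ is a right inverse of~$\cA$ and that $T':=\cR\circ(A+B')^{-1}\circ\cR^c$ is a left inverse; these two operators must then coincide and equal $\cA^{-1}$.

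First I would record the functional-analytic setup. By~\eqref{E.A} and~\eqref{E.BB}, both $A+B$ and $A+B'$ lie in $\cL(\BE^{s+r},\BE^s)$, so the hypothesis $0\in\rho(A+B)\cap\rho(A+B')$ gives $(A+B)^{-1},(A+B')^{-1}\in\cL(\BE^s,\BE^{s+r})$. Theorem~\ref{thm-L.l} provides $\cRcRc$ as an r-e pair between~$\BE^\sa$ and the relevant function space on~$M$ at order~$\sa$ for $\sa\in\{s,s+r\}$; in particular $\cR\circ\cR^c=\id$. Consequently $T$ and $T'$ are bounded operators from the function space on~$M$ at order~$s$ into the one at order~$s+r$.

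Next, multiplying the first intertwining identity $\cA\circ\cR=\cR\circ(A+B)$ of Lemma~\ref{lem-E.R} on the right by $(A+B)^{-1}\circ\cR^c$ yields
$$
\cA\circ T=\cR\circ(A+B)\circ(A+B)^{-1}\circ\cR^c=\cR\circ\cR^c=\id,
$$
so $T$ is a right inverse of~$\cA$. Multiplying the companion identity $\cR^c\circ\cA=(A+B')\circ\cR^c$ on the left by $\cR\circ(A+B')^{-1}$ yields
$$
T'\circ\cA=\cR\circ(A+B')^{-1}\circ(A+B')\circ\cR^c=\cR\circ\cR^c=\id,
$$
so $T'$ is a left inverse of~$\cA$. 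Hence $\cA$ is bijective, $T=T'=\cA^{-1}$, so $0\in\rho(\cA)$ and~\eqref{E.A1} holds.

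There is essentially no obstacle: the entire content is a short algebraic manipulation that rests on the two intertwining identities of Lemma~\ref{lem-E.R} together with the retraction property $\cR\circ\cR^c=\id$ of Theorem~\ref{thm-L.l}. Note that both formulas $T=\cA^{-1}$ and $T'=\cA^{-1}$ emerge from the argument; the statement records only the first, but the second is what will matter when transferring information back from the localized operators to~$\cA$.
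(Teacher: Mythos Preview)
Your proof is correct and takes essentially the same approach as the paper's: both use the two intertwining identities of Lemma~\ref{lem-E.R} together with $\cR\circ\cR^c=\id$ to produce a right inverse (from the first identity) and a left inverse (from the second), yielding bijectivity and formula~\eqref{E.A1}. The paper phrases this as separate injectivity and surjectivity arguments, while you package it as a left/right inverse computation, but the content is identical.
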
 
\begin{proof} 
Let 
\hb{u\in\gF_q^{s+r}(V)} satisfy 
\hb{\cA u=0}. Then \eqref{E.AR} implies 
$$ 
0=\cR^c\cA u=(A+B')\cR^cu=0. 
\npb 
$$ 
Hence 
\hb{\cR^cu=0} and, thus, 
\hb{u=\cR\cR^cu=0}. Consequently, $\cA$~is injective. 

\smallskip 
Suppose 
\hb{f\in\gF_q^s(V)}. There is a unique 
\hb{\mfu\in\BE_q^{s+r}} with 
\hb{(A+B)\mfu=\cR^cf}. Setting 
$$ 
u:=\cR\mfu=\cR(A+B)^{-1}\cR^cf\in\gF_q^{s+r}(V), 
$$ 
we get 
$$ 
\cA u=\cA\cR(A+B)^{-1}\cR^cf 
=\cR(A+B)(A+B)^{-1}\cR^cf=\cR\cR^cf=f 
$$ 
by \eqref{E.AR}. Thus $\cA$~is surjective and \eqref{E.A1} applies. Since 
$\cA$~is closed, when considered as a linear  operator in~$\gF_q^s(V)$, 
we get 
\hb{0\in\rho(\cA)}. 
\end{proof} 
\section{Localizations of Parabolic Operators\label{sec-P}} 
\extraindent 
We require again assumption~\eqref{E.ass} and assume that $(s,q)$ is 
\hbox{$r$-admissible}. Then 
$$ 
\mf{\gF}_q^{s/\vec r}:=C\bigl(\gK,\gF_q^{s/\vec r}\BHE\bigr) 
$$ 
and 
\begin{equation}\label{P.Edef} 
\BE^{s/\vec r}:= 
\left\{ 
\bal 
{}
&\ell_q\bigl(\gF_q^{s/\vec r}\BHE\bigr), 
 &\quad 1\leq q<\iy,\cr 
&\ell_{\iy,\unif}\bigl(\gF_\iy^{s/\vec r}\BHE\bigr),
 &\quad       q=\iy. 
\eal 
\right. 
\end{equation} 
We denote the point-wise extension of~$\cRcRc$ to \hbox{$t$-dependent} 
functions again by the same symbol. It is easy to extend 
Theorem~\ref{thm-L.l} to obtain the following analogue. 
\begin{thm(A)}\label{thm-P.l} 
$\cRcRc$~is an \hbox{r-e} pair for 
\hb{\bigl(\BE^{s/\vec r},\gF_q^{s/\vec r}(V\times\BR^+)\bigr)}. 
\end{thm(A)} 
Let the hypotheses of Lemma~\ref{lem-E.R} be satisfied. We set 
\hb{\pl_{t,\ka}:=\pl_t} for 
\hb{\ka\in\gK} and 
\hb{\mfpl_t:=\diag[\pl_{t,\ka}]}. We write 
\hb{\ga_\ka:=\ga_{\pl\BH}} for 
\hb{\ka\in\gK}, where 
$\ga_\BH$~is the trace operator on~$\pl\BH$, and 
\hb{\mfga:=\diag[\ga_\ka]}. 

\smallskip 
The next lemma and its corollary are obvious consequences of the results 
of the preceding section. 
\begin{lem}\label{lem-P.R} 
It holds 
$$ 
\bal 
(\pl_t+\cA)\circ\cR 
&=\cR\circ(\mfpl_t+A+B),\cr 
\cR^c(\pl_t+\cA) 
&=(\mfpl_t+A+B')\circ\cR^c, 
\eal 
$$  
and 
$$ 
\ga\circ\cR=\cR\circ\mfga 
\qb \cR^c\circ\ga=\mfga\circ\cR^c. 
$$ 
\end{lem}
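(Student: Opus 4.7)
The plan is to exploit the fact that $\cR$ and $\cR^c$ are purely spatial operations: they are built from multiplication by the time-independent cutoffs $\pi_\ka$, $\chi_\ka$ and the pull-backs/push-forwards of the charts $\ka$, none of which depend on~$t$. By Theorem~\ref{thm-P.l} these operators are bounded between the relevant anisotropic spaces $\BE^{s/\vec r}$ and $\gF_q^{s/\vec r}(V\times\BR^+)$, so all compositions written below make sense.

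First I would observe that $\pl_t$ and each component $\pl_{t,\ka}$ of $\mfpl_t$ act solely in the time variable, while $\cR$ and $\cR^c$ act solely in the spatial variable. Hence they commute pointwise in $t$, yielding
$$
\pl_t \circ \cR = \cR \circ \mfpl_t,
\qquad
\cR^c \circ \pl_t = \mfpl_t \circ \cR^c.
$$
Next, since $\cA$ is autonomous under assumption~\eqref{E.ass}, Lemma~\ref{lem-E.R} applied at each time slice~$t\in\BR^+$ gives
$$
\cA \circ \cR = \cR \circ (A + B),
\qquad
\cR^c \circ \cA = (A + B') \circ \cR^c,
$$
where $A$, $B$, $B'$ are understood to act fiberwise in~$t$ on $\BE^{s/\vec r}$ (the uniform-in-$\ka$ bounds of Lemma~\ref{lem-E.R} ensure the resulting maps land in the correct sequence space in either case of~\eqref{P.Edef}). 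Adding the two pairs of identities yields the first two assertions of the lemma.

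For the trace relations, $\ga u = u(\cdot,0)$ is evaluation at $t=0$, which again commutes pointwise with the spatial formulas defining $\cR$ and $\cR^c$: for $\mfu=(u_\ka)\in\BE^{s/\vec r}$ with $s$ large enough that the trace is meaningful,
$$
\ga(\cR\mfu) = \sum_\ka \pi_\ka\,\ka^*\bigl(u_\ka(0)\bigr) = \cR\bigl((\ga_\ka u_\ka)_\ka\bigr) = \cR(\mfga\mfu),
$$
and an entirely analogous pointwise calculation gives $\cR^c\circ\ga = \mfga\circ\cR^c$. No genuine obstacle arises; the lemma is in essence a bookkeeping corollary of Lemma~\ref{lem-E.R}, with the only thing to verify being that $t$-differentiation and trace at $t=0$ pass through the spatial localization, which they do because neither the atlas $\gK$ nor the subordinate localization system depends on~$t$.
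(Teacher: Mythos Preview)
Your proposal is correct and follows exactly the approach the paper has in mind: the paper gives no explicit proof, merely declaring the lemma and its corollary ``obvious consequences of the results of the preceding section,'' i.e., of Lemma~\ref{lem-E.R}. You have simply written out the details---that the spatial, time-independent operators $\cR$, $\cR^c$ commute with $\pl_t$ and with evaluation at $t=0$, and that Lemma~\ref{lem-E.R} applies fiberwise in~$t$---which is precisely what the paper leaves implicit.
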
 
\begin{cor}\label{cor-P.R} 
Suppose 
$$ 
\bal 
{}
&(\mfpl_t+A+B,\mfga)\text{ and }(\mfpl_t+A+B',\mfga)\cr  
&\text{belong to }\Lis(\BE^{(s+r)/\vec r}, 
\BE^{s/\vec r}\times\BE^{s+r(1-1/q)}). 
\eal 
$$ 
Then 
$$ 
(\pl_t+\cA,\ga) 
\in\Lis\bigl(\gF_q^{(s+r)/\vec r}(V\times\BR^+), 
\gF_q^{s/\vec r}(V\times\BR^+)\times\gF_q^{s+r(1-1/q)}(V)\bigr) 
$$ 
and 
$$ 
(\pl_t+\cA,\ga)^{-1} 
=\cR\circ(\mfpl_t+A+B,\mfga)^{-1}\circ(\cR^c\times\cR^c). 
$$ 
\end{cor}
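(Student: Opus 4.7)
The plan is to imitate the proof of Corollary~\ref{cor-E.R}, using Lemma~\ref{lem-P.R} in place of Lemma~\ref{lem-E.R} and Theorem~\ref{thm-P.l} in place of Theorem~\ref{thm-L.l}. The continuity of $(\pl_t+\cA,\ga)$ between the asserted anisotropic spaces is first recorded: the mapping property of $\pl_t+\cA$ is Proposition~\ref{pro-I.A}, while the trace assertion $\ga\in\cL\bigl(\gF_q^{(s+r)/\vec r}(V\times\BR^+),\gF_q^{s+r(1-1/q)}(V)\bigr)$ is the manifold version of Theorem~\ref{thm-B.T}, which reduces to the Euclidean statement via Theorem~\ref{thm-P.l}.

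For injectivity, I would take $u\in\gF_q^{(s+r)/\vec r}(V\times\BR^+)$ with $(\pl_t+\cA)u=0$ and $\ga u=0$, apply $\cR^c$, and invoke the intertwining relations of Lemma~\ref{lem-P.R} to obtain
\[
(\mfpl_t+A+B')\cR^cu=\cR^c(\pl_t+\cA)u=0,\qquad \mfga\cR^cu=\cR^c\ga u=0.
\]
The hypothesis that $(\mfpl_t+A+B',\mfga)$ is an isomorphism then forces $\cR^cu=0$, and since $(\cR,\cR^c)$ is an r-e pair by Theorem~\ref{thm-P.l} we conclude $u=\cR\cR^cu=0$.

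For surjectivity, given $(f,u_0)$ in the target space I would set
\[
\mfu:=(\mfpl_t+A+B,\mfga)^{-1}(\cR^cf,\cR^cu_0)\in\BE^{(s+r)/\vec r},\qquad u:=\cR\mfu,
\]
and use the first pair of intertwinings in Lemma~\ref{lem-P.R} together with $\cR\circ\cR^c=\id$ to verify
\[
(\pl_t+\cA)u=\cR(\mfpl_t+A+B)\mfu=\cR\cR^cf=f,\qquad \ga u=\cR\mfga\mfu=\cR\cR^cu_0=u_0.
\]
This simultaneously establishes bijectivity and yields the displayed representation of the inverse. Boundedness of the inverse is then automatic as the composition of three bounded maps, $\cR^c\times\cR^c$ from Theorem~\ref{thm-P.l}, $(\mfpl_t+A+B,\mfga)^{-1}$ by hypothesis, and $\cR$ again by Theorem~\ref{thm-P.l}.

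I expect no genuine obstacle in the time-dependent setting: the only point that requires some care is the manifold trace theorem used for continuity in the forward direction, but this is already subsumed in the anisotropic function-space machinery of~\cite{Ama12c} and~\cite{Ama17a} cited in Section~\ref{sec-F}. Everything else is a purely algebraic manipulation of retractions, coretractions, and intertwining identities, exactly as in the elliptic case treated in Corollary~\ref{cor-E.R}.
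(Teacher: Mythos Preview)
Your proposal is correct and is precisely the argument the paper has in mind: the text explicitly states that Lemma~\ref{lem-P.R} and its corollary are ``obvious consequences of the results of the preceding section,'' meaning one simply reruns the proof of Corollary~\ref{cor-E.R} with the extra trace component~$\mfga$ and with Theorem~\ref{thm-P.l} in place of Theorem~\ref{thm-L.l}. Your injectivity and surjectivity arguments match that template exactly.
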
 
\section{The Flat Case\label{sec-N}} 
\extraindent 
Now we assume 
\newsavebox{\Eass}
\sbox{\Eass}{\eqref{E.ass}} 
\newcommand*{\Eassspez}{\usebox{\Eass}} 
\begin{equation}\label{N.ass} 
\left. 
\bal 
{\rm(i)} 
&\quad \Mg=\Rmgm.\cr 
{\rm(ii)} 
&\quad \text{Assumption \Eassspez is satisfied}.\cr 
{\rm(iii)} 
&\quad 
 \ve^{-1}+{\textstyle\sum_{j=0}^r}\|a_j\|_{\bs,\iy}\leq\bka. 
\eal 
\quad\right\} 
\end{equation} 
Recall from \eqref{L.TE} that 
\hb{V=\BR^m\times E}. We also suppose that 
$$ 
\bt\quad 
(s,q)\text{ is $r$-admissible} 
\npb 
$$ 
and write 
\hb{X_\eta^j:=\gF_{q;\eta}^{(s+jr)/\vec r}\BHE} for 
\hb{j=0,1} and 
\hb{Y_{\coY\eta}:=\gF_{q;\eta}^{s+r(1-1/q)}\RmE}. 

\smallskip 
It follows from \eqref{E.ass} that the constant coefficient operator 
\hb{a_r(x)\btdot\pl^r} is normally \hbox{$\ve$-elliptic} and 
\hb{|a_r(x)|_{\cL^r}\leq\|a_r\|_{\bs,\iy}}, uniformly with respect to 
\hb{x\in\BR^m}. Hence Theorem~\ref{thm-C.C} implies 
\begin{equation}\label{N.ax} 
\bigl(\pl_t+\eta+a_r(x)\btdot\pl^r,\ga\bigr) 
\in\Lis(X^1,X^0\times Y)  
\end{equation} 
and there exists 
\hb{c_0=c_0(\bka)} such that, for  
\hb{x\in\BR^m} and 
\hb{\eta>0}, 
\begin{equation}\label{N.ax1} 
\big\|\bigl(\pl_t+\eta+a_r(x)\btdot\pl^r,\ga\bigr)^{-1}\big\| 
_{\Lis(X_\eta^0\times Y_{\coY\eta},X_\eta^1)}\leq c_0. 
\end{equation} 

\smallskip 
Since 
\hb{a_r\in buc^{\bs}(\BR^m,\cL^r)}, 
\begin{equation}\label{N.a0} 
\big|a_r\bigl(\da(x+z)\bigr)-a_r(\da z)\big|_{\cL^r} 
\leq c\da^{\bs} 
\qa x\in Q^m 
\qb z\in\BZ^m, 
\end{equation} 
and 
\begin{equation}\label{N.ad} 
\sup_{x,y\in Q^m} 
\frac{\big|a_r\bigl(\da(x+z)\bigr)-a_r\bigl(\da(y+z)\bigr)\big|_{\cL^r}} 
{|\da(x-y)|^{\bs}}\ra0 
\quad\text{as }\da\ra0,  
\end{equation} 
uniformly with respect to 
\hb{z\in\BZ^m}. With the radial retraction~$h_\da$ we put 
$$ 
\sfa_{z,\da}(x):=a_r\bigl(\da z+h_\da(x-\da z)\bigr) 
\qa x\in\BR^m 
\qb z\in\BZ^m. 
$$ 
Then, as in \eqref{E.a}, 
\begin{equation}\label{N.a} 
(\sfa_{z,\da})_{z\in\BZ^m} 
\in\ell_{\iy,\unif}\bigl(buc^\bs(\BR^m,\cL^r)\bigr) 
\end{equation} 
(where we now employ the index set~$\BZ$), and 
$$ 
\sfa_{z,\da}(x)=a_r(x) 
\qa x\in\da(z+Q^m). 
$$ 
From this, \eqref{N.a0}, \,\eqref{N.ad}, and Theorems \ref{thm-B.F}(ii) 
and~\ref{thm-B.M} we infer that 
$$ 
\big\|\bigl(\sfa_{z,\da} 
-a_r(\da z)\bigr)\btdot\pl^r\big\|_{\cL(X_\eta^1,X_\eta^0)} 
\leq c\,\big\|a_r\bigl(\da(z+\cdot)\bigr) 
-a_r(\da z)\big\|_{BC^{\bs}(Q^m,\cL^r)}\ra0 
$$ 
as 
\hb{\da\ra0}, uniformly with respect to 
\hb{z\in\BZ^m}. Hence we can fix 
\hb{\da=\da(\bka)\in(0,1)} such that 
\begin{equation}\label{N.d} 
\big\|\bigl(\sfa_{z,\da} 
-a(\da z)\btdot\pl^r\bigr)\big\|_{\cL(X_\eta^1,X_\eta^0)}\leq1/2c_0, 
\npb 
\end{equation} 
uniformly with respect to 
\hb{\eta>0} and 
\hb{z\in\BZ^m}. 

\smallskip 
We set 
\hb{\ka_z(x):=-z+x/\da} for 
\hb{x\in U_{\coU\ka_z}:=\da(z+Q^m)} and 
\hb{z\in\BZ^m}. Then 
\hb{\gK:=\{\,\ka_z\ ;\ z\in\BZ^m\,\}} is a uniformly regular atlas 
for~$\Mg$. We fix a localization system 
\hb{\bigl\{\,(\pi_\ka,\chi_\ka)\ ;\ \ka\in\gK\,\bigr\}} subordinate 
to~$\gK$ and put 
\begin{equation}\label{N.R} 
R\mfu:=\sum_\ka\pi_\ka u_\ka 
\qb R^cu:=(\pi_\ka u) 
\npb 
\end{equation} 
for 
\hb{\mfu=(u_\ka)\in\mf{\gF}_q^{s/\vec r}} and 
\hb{u\in\gF_q^{s/\vec r}(V\times\BR^+)}. 

\smallskip 
The following lemma is a parameter-dependent equivalent of 
Theorem~\ref{thm-L.l}. Its proof, however, is much simpler since the 
atlas~$\gK$ is not explicitly involved. 
\begin{lem}\label{lem-N.R} 
$\RRc$ is an \hbox{$\eta$-uniform} \hbox{r-e} pair for 
\hb{\bigl(\BE_{q;\eta}^{s/\vec r}, 
   \gF_{q;\eta}^{s/\vec r}(V\times\BR^+)\bigr)}. 
\end{lem}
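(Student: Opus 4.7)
The plan is to verify the three defining conditions of an $\eta$-uniform r-e pair: the retraction identity $R\circ R^c=\id$, and $\eta$-uniform boundedness of $R$ and $R^c$ between the stated spaces. The retraction identity is immediate from \eqref{L.LS}(i), since $RR^cu=\sum_\ka\pi_\ka(\pi_\ka u)=\bigl(\sum_\ka\pi_\ka^2\bigr)u=u$ pointwise, and hence in $\gF_{q;\eta}^{s/\vec r}(V\times\BR^+)$ once the boundedness parts are in hand.

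The remaining task is structurally the same as the proof of Theorem~\ref{thm-L.l} in the manifold case, but substantially simplified because the charts $\ka_z$ are just translations and rescalings of a single map, so no change-of-variables operators $S_\tkk$ appear. I would choose the localization system so that each $\pi_\ka$ is a translate of one fixed rescaled bump, giving $\|\pi_\ka\|_{BC^k(\BR^m)}\leq c(k)$ uniformly in $\ka\in\gK$ for every $k$. Since $\pi_\ka$ is time-independent and compactly supported in $U_\ka=\da(z+Q^m)$, Theorem~\ref{thm-B.M} yields that multiplication by $\pi_\ka$ belongs to $\cL(\gF_{q;\eta}^{s/\vec r}(\BH,E))$ \emph{$\eta$-uniformly} and with norm independent of $\ka$. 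To pass from these per-chart estimates to the aggregated $\BE_{q;\eta}^{s/\vec r}$ norm, I would invoke Theorem~\ref{thm-B.I} to rewrite $\gF_{q;\eta}^{s/\vec r}(\BH,E)$ as the intersection of the mixed-norm factors $L_q(\BR^+,\gF_{q;\eta}^{s}(\BR^m,E))$ and $\gF_{q;\eta}^{s/r}(\BR^+,L_q(\BR^m,E))$ (with the $\BUC$ variant when $q=\infty$) and estimate factor by factor, using finite multiplicity of the cover $\{U_\ka\}$ together with $\tsupp\pi_\ka\subset U_\ka$ to obtain $\sum_\ka\|\pi_\ka u\|^q\leq N\|u\|^q$ (respectively the sup analogue) on the $L_q$-of-Sobolev side, and a dual summation argument for $R$.

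The main obstacle will be controlling the nonlocal Slobodeckii and little-H\"older seminorms in the patch-wise aggregation, since such seminorms do not literally see compact support: the difference-quotient integrals in the seminorm representation cannot be restricted to $U_\ka\times U_\ka$. This is exactly the role of the intersection-space reduction via Theorem~\ref{thm-B.I}: within each factor the nonlocality occurs in only one variable at a time, and the orthogonal direction is in an honest $L_q$ (or $\BUC$) norm, so Fubini together with the uniform multiplier bound from Theorem~\ref{thm-B.M} and the finite-overlap property of $\{U_\ka\}$ tames the patch-wise sums. Since every multiplier bound invoked from Theorem~\ref{thm-B.M} is $\eta$-uniform and the overlap and support data have no $\eta$-dependence, the final estimates come out $\eta$-uniform, completing the verification.
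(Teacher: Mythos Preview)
Your proposal is correct and follows essentially the same approach the paper indicates: the paper does not give a detailed proof of this lemma but simply remarks that it is a parameter-dependent analogue of Theorem~\ref{thm-L.l} whose proof is ``much simpler since the atlas~$\gK$ is not explicitly involved.'' You have correctly identified this simplification (no $\ka^*$, $\ka_*$, or $S_{\tkk}$ appear in the definitions~\eqref{N.R}) and sketched precisely the ingredients---the partition-of-unity identity for $RR^c=\id$, the $\eta$-uniform multiplier bound from Theorem~\ref{thm-B.M}, the intersection-space decomposition of Theorem~\ref{thm-B.I}, and the finite-overlap property---that make the adaptation go through.
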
 
For easy reference we include the following well-known perturbation theorem. 
\begin{lem}\label{lem-N.P} 
Let $X$ and~$Y$ be Banach spaces and 
\hb{a\in\Lis\XY}. Suppose 
\hb{b\in\cL\XY} satisfies 
\hb{\|ba^{-1}\|\leq1/2}, then 
\hb{a+b} belongs to $\Lis\XY$ and 
\hb{\|(a+b)^{-1}\|\leq2\,\|a^{-1}\|}. 
\end{lem}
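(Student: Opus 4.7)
The plan is to reduce the claim to the classical Neumann series. First I would factor
\[
a+b = (1+ba^{-1})\circ a,
\]
noting that $ba^{-1}\in\cL(Y)$ since $a^{-1}\in\Lis(Y,X)$ and $b\in\cL\XY$. The hypothesis $\|ba^{-1}\|\leq 1/2<1$ then makes $1+ba^{-1}\in\Lis(Y)$ via the absolutely convergent series
\[
(1+ba^{-1})^{-1} = \sum_{k=0}^{\iy}(-ba^{-1})^k\in\cL(Y),
\]
with norm bound $\|(1+ba^{-1})^{-1}\|\leq\sum_{k\geq0}(1/2)^k=2$.

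Next I would combine the two isomorphisms: since $a\in\Lis\XY$ and $1+ba^{-1}\in\Lis(Y)$, their composition $a+b$ lies in $\Lis\XY$, with explicit inverse
\[
(a+b)^{-1} = a^{-1}\circ(1+ba^{-1})^{-1}.
\]
Taking operator norms yields $\|(a+b)^{-1}\|\leq\|a^{-1}\|\cdot\|(1+ba^{-1})^{-1}\|\leq 2\,\|a^{-1}\|$, which is the desired estimate.

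Since everything follows from a standard geometric-series argument, there is no genuine obstacle here; the only bookkeeping point is to ensure $ba^{-1}$ is interpreted as an endomorphism of $Y$ (rather than of $X$) so that the factorization on the left is legitimate. Consequently the proof is essentially a one-line invocation of the Neumann series together with the bound $1/(1-1/2)=2$.
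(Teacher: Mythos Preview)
Your proof is correct and follows essentially the same route as the paper: factor $a+b=(1+ba^{-1})a$, invoke the Neumann series to get $\|(1+ba^{-1})^{-1}\|\leq 2$, and conclude. The only cosmetic difference is that the paper writes $\Laut(Y)$ where you write $\Lis(Y)$.
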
 
\begin{proof} 
A~Neumann series argument shows that 
$$ 
1+ba^{-1}\in\Laut(Y) 
\text{ and } 
\|(1+ba^{-1})^{-1}\|\leq2. 
\npb 
$$ 
Hence the claim follows from 
\hb{a+b=(1+ba^{-1})a}. 
\end{proof} 

We set 
$$ 
\sA_\ka:=a_{z,\da}\btdot\pl^r\text{ for }\ka=\ka_z\in\gK 
$$ 
and 
$$ 
\BX_\eta^j:=\BE_{q;\eta}^{(s+jr)/\vec r}\text{ for }j=0,1 
\qb \BY_{\coY\eta}:=\BE_{q;\eta}^{s+r(1-1/q)}. 
\npb 
$$ 
Clearly, $\BX_\eta^j$~is obtained by replacing~$X^j$ in~\eqref{P.Edef} 
by $X_\eta^j$,~etc. 
\begin{lem}\label{lem-N.A} 
Set 
\hb{\sA:=\diag[\sA_\ka]}. Then 
\hb{(\mfpl_t+\eta+\sA,\mfga)\in\Lis(\BX^1,\BX^0\times\BY)} and 
$$ 
\|(\mfpl_t+\eta+\sA,\mfga)^{-1}\| 
_{\cL(\BX_\eta^0\times\BY_{\coY\eta},\BX_\eta^1)} 
\leq c(\bka) 
\quad\text{$\eta$-uniformly}. 
$$ 
\end{lem}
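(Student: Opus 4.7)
The plan is to invert the operator one coordinate patch at a time and then assemble the component-wise inverses into a diagonal operator on the sequence spaces, exploiting the $\eta$-uniform bounds established earlier.

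First, I would freeze coefficients. For each $\ka = \ka_z \in \gK$ write
$$
\sA_\ka = a_r(\da z)\btdot\pl^r + b_\ka,
\qquad
b_\ka := \bigl(\sfa_{z,\da} - a_r(\da z)\bigr)\btdot\pl^r.
$$
The operator $a_r(\da z)\btdot\pl^r$ has constant coefficients and, since $|a_r(\da z)|_{\cL^r}\le\|a_r\|_{\bs,\iy}\le\bka$ and $\cA$ is normally $\ve$-elliptic, it is normally $\ve$-elliptic as well. Hence \eqref{N.ax} and \eqref{N.ax1} yield
$$
\bigl(\pl_t+\eta+a_r(\da z)\btdot\pl^r,\,\ga\bigr)
\in\Lis(X_\eta^1, X_\eta^0\times Y_{\coY\eta})
$$
with inverse bounded by $c_0 = c_0(\bka)$, $\eta$-uniformly and uniformly in $z\in\BZ^m$.

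Next I would treat $b_\ka$ as a small perturbation. By \eqref{N.d},
$$
\|b_\ka\|_{\cL(X_\eta^1,X_\eta^0)} \le \tfrac{1}{2c_0},
$$
$\eta$-uniformly and uniformly in $z\in\BZ^m$. Thus, setting
$a := \bigl(\pl_t+\eta+a_r(\da z)\btdot\pl^r,\ga\bigr)\in\Lis(X_\eta^1,X_\eta^0\times Y_{\coY\eta})$
and $b := (b_\ka,0)\in\cL(X_\eta^1,X_\eta^0\times Y_{\coY\eta})$, one has
$\|ba^{-1}\|\le \|b_\ka\|_{\cL(X_\eta^1,X_\eta^0)}\cdot c_0\le 1/2$.
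Lemma~\ref{lem-N.P} therefore gives
$$
(\pl_t+\eta+\sA_\ka,\ga)\in\Lis(X_\eta^1,X_\eta^0\times Y_{\coY\eta}),
\qquad
\|(\pl_t+\eta+\sA_\ka,\ga)^{-1}\|\le 2c_0,
$$
$\eta$-uniformly and uniformly in $\ka\in\gK$.

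Finally, I would assemble the component-wise inverses. Since $\sA = \diag[\sA_\ka]$ and $\mfpl_t = \diag[\pl_{t,\ka}]$ and $\mfga = \diag[\ga_\ka]$, the operator $(\mfpl_t+\eta+\sA,\mfga)$ acts coordinate by coordinate. On the sequence spaces $\BX_\eta^j$ (both $\ell_q$ and $\ell_{\iy,\unif}$ variants) a diagonal operator $\diag[T_\ka]$ has norm equal to $\sup_\ka\|T_\ka\|$, and invertibility transfers from each component provided the component-wise inverse norms are uniformly bounded. The uniform bound $2c_0$ just obtained therefore yields
$$
(\mfpl_t+\eta+\sA,\mfga)\in\Lis(\BX_\eta^1,\BX_\eta^0\times\BY_{\coY\eta})
\text{ with inverse bounded by }2c_0,
$$
$\eta$-uniformly, which is the claim with $c(\bka) = 2c_0(\bka)$. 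The only genuinely nontrivial input is the smallness estimate \eqref{N.d}, already secured in the preceding paragraph via the uniform continuity of $a_r$ in $buc^{\bs}$; once that is in hand, the present lemma is a mechanical perturbation-plus-diagonalization argument.
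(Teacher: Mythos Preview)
Your proof is correct and follows essentially the same route as the paper: freeze the top-order coefficient at $\da z$, invoke \eqref{N.ax}--\eqref{N.ax1} for the constant-coefficient piece, absorb the remainder via \eqref{N.d} and Lemma~\ref{lem-N.P}, and then pass to the diagonal operator on the sequence space. The only cosmetic difference is that the paper cites \eqref{N.a} for the final assembly step (to cover the $\ell_{\iy,\unif}$ case), whereas you spell out the diagonal-operator argument directly.
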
 
\begin{proof} 
We put 
\hb{\sA_\ka^0:=a(\da z)\btdot\pl^r} for 
\hb{\ka=\ka_z}. Then \eqref{N.ax} and \eqref{N.ax1} imply that 
\hb{(\pl_t+\eta+\cA_\ka^0,\ga_\ka)} is an isomorphism from~$X^1$ onto 
\hb{X^0\times Y}, and 
\begin{equation}\label{N.Ak0} 
\|(\pl_t+\eta+\sA_\ka^0,\ga_\ka)^{-1}\| 
_{\cL(\BX_\eta^0\times\BY_{\coY\eta},\BX_\eta^1)} 
\leq c_0 
\qa \ka\in\gK 
\qb \eta>0. 
\end{equation} 
Set 
\hb{\sB_\ka:=\sA_\ka-\sA_\ka^0\in\cL(X^1,X^0)}. Then 
\hb{\|\sB_\ka\|_{\cL(\BX_\eta^1,\BX_\eta^0)}\leq1/2c_0} by \eqref{N.d}, 
uniformly with respect to 
\hb{\ka\in\gK} and 
\hb{\eta>0}. Hence it follows from 
$$ 
(\pl_t+\eta+\sA_\ka,\ga_\ka) 
=(\pl_t+\eta+\sA_\ka^0,\ga_\ka)+(\sB_\ka,0), 
$$ 
estimate~\eqref{N.Ak0}, and Lemma~\ref{lem-N.P} that 
\hb{(\pl_t+\eta+\sA_\ka,\ga_\ka)\in\Lis(X^1,X^0\times Y)} and 
$$ 
\|(\pl_t+\eta+\sA_\ka,\ga_\ka)^{-1}\| 
_{\cL(X_\eta^0\times Y_{\coY\eta},X_\eta^1)}\leq2c_0, 
$$ 
uniformly with respect to 
\hb{\ka\in\gK} and 
\hb{\eta>0}. Now, taking \eqref{N.a} into consideration, 
the assertion is clear. 
\end{proof} 
The next lemma is an analogue to \ref{lem-P.R} in the present setting. 
Its proof is obtained by simplifying the demonstration of 
Lemma~\ref{lem-E.R} based on the fact that the local charts do not occur 
in~\eqref{N.R}. 

\smallskip 
We set 
\hb{\BW_\eta:=\ell_q(\gF_{q;\eta}^{(s+r-1)/\vec r})} if 
\hb{q<\iy}, and 
\hb{\BW_\eta:=\ell_{\iy,\unif}(\gF_{\iy;\eta}^{(s+r-1)/\vec r})} if 
\hb{q=\iy}. 
\begin{lem}\label{lem-N.AR} 
There exist 
\hb{\sB,\sB'\in\cL(\BW_\eta,\BX_\eta^0)} such that 
\begin{equation}\label{N.AR} 
\left. 
\begin{split} 
(\pl_t+\cA_\eta)\circ R 
&=R\circ(\mfpl_t+\eta+\sA+\sB),\cr 
R^c\circ(\pl_t+\cA_\eta) 
&=(\mfpl_t+\eta+\sA+\sB')\circ R^c. 
\end{split} 
\right.  
\end{equation} 
\end{lem}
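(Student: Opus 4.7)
The plan is to mimic the proof of Lemma~\ref{lem-E.R}, exploiting two simplifying features peculiar to the flat setting: the retraction $R$ and its right inverse $R^c$ act only by multiplication with $\pi_\ka$ (no pull-back or push-forward by local charts), so $\na$ may be replaced throughout by $\pl$ and the Christoffel-type corrections $b_{j,i}^\ka$ of~\eqref{L.Nj} do not appear. Consequently, the matrix entries of $\sB$ and~$\sB'$ will be plain differential operators of order at most $r-1$ whose coefficients lie in $bc^\bs$ and are supported in $U_{\coU\ka}\cap U_{\coU\tk}$.

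For the first identity I would start from
$$
(\pl_t+\cA_\eta)R\mfu=\sum_\ka\pi_\ka(\pl_t+\eta)u_\ka+\sum_\ka\cA(\pi_\ka u_\ka)
$$
and split $\cA(\pi_\ka u_\ka)=\pi_\ka\cA u_\ka+[\cA,\pi_\ka]u_\ka$. The diagonal principal part is then extracted via the pointwise identity $a_r=\sfa_{z,\da}$ on $U_{\coU\ka_z}=\da(z+Q^m)$ recorded in~\eqref{E.aa}: since $\tsupp(\pi_\ka)\subset U_{\coU\ka}$, we obtain $\pi_\ka a_r\btdot\pl^ru_\ka=\pi_\ka\sA_\ka u_\ka$. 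The remaining terms $\pi_\ka\sum_{j<r}a_j\btdot\pl^ju_\ka$ and $[\cA,\pi_\ka]u_\ka$ are operators of order at most $r-1$ in $u_\ka$. Inserting $1=\sum_\tk\pi_\tk^2$ recasts them as $\sum_\tk\pi_\tk(\sB_{\tk\ka}u_\ka)$ with
$$
\sB_{\tk\ka}u_\ka:=\pi_\tk\Bigl(\pi_\ka\sum_{j<r}a_j\btdot\pl^ju_\ka+[\cA,\pi_\ka]u_\ka\Bigr),
$$
which vanishes unless $\tk\in\gN(\ka)$. Summing over $\ka$ and swapping the order of summation gives the first half of~\eqref{N.AR} with $\sB:=[\sB_{\ka\tk}]$.

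To verify $\sB\in\cL(\BW_\eta,\BX_\eta^0)$ $\eta$-uniformly (for $\eta\geq1$), I would combine Theorem~\ref{thm-B.F}(ii), which handles each $\pl^j$ with $j\leq r-1$ as an $\eta$-uniform map $\gF_{q;\eta}^{(s+j)/\vec r}\to\gF_{q;\eta}^{s/\vec r}$, with Theorem~\ref{thm-B.F}(i) to embed $\gF_{q;\eta}^{(s+r-1)/\vec r}$ into $\gF_{q;\eta}^{(s+j)/\vec r}$ with norm $\leq\eta^{j-r+1}\leq 1$, and with Theorem~\ref{thm-B.M} to pair the resulting functions against the $bc^\bs$-coefficients $\pi_\tk\pi_\ka a_j$ and those appearing in $\pi_\tk[\cA,\pi_\ka]$; the latter are uniformly bounded in $bc^\bs$ with respect to $\ka,\tk$ thanks to \eqref{L.LS}(iii) and to the $\bs$-regularity of the $a_j$. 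All three theorems are $\eta$-uniform. The finite multiplicity of $\gK$ forces each row and column of $[\sB_{\ka\tk}]$ to have a uniformly bounded number of nonzero entries, which converts the pointwise bound into a sequence-space bound.

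The second identity is obtained symmetrically: apply $R^c$ to $(\pl_t+\cA_\eta)u$ and use $\pi_\ka\cA u=\cA(\pi_\ka u)-[\cA,\pi_\ka]u$. The same support identity yields $\cA(\pi_\ka u)=\sA_\ka(\pi_\ka u)+\sum_{j<r}a_j\btdot\pl^j(\pi_\ka u)$, and the expansion $u=\sum_\tk\pi_\tk^2 u$ inside the lower-order remainder and the commutator produces an off-diagonal $\sB'=[\sB'_{\ka\tk}]$ acting on the coretracted sequence $(\pi_\tk u)$. The same multiplier argument as above yields $\sB'\in\cL(\BW_\eta,\BX_\eta^0)$ $\eta$-uniformly. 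I expect the main technical obstacle to be the bookkeeping for $\sB'$: distributing $\pl^j$ over the product $(\pi_\ka\pi_\tk)(\pi_\tk u)$ generates several terms of varying orders, but each is of anisotropic order at most $r-1$ in $\pi_\tk u$ and is controlled by the same combination of Theorems~\ref{thm-B.F} and~\ref{thm-B.M}; this is precisely the analogue here of the passage from~\eqref{E.aj} to~\eqref{E.S} in the proof of Lemma~\ref{lem-E.R}.
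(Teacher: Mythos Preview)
Your proposal is correct and follows exactly the approach the paper indicates: it explicitly says the proof ``is obtained by simplifying the demonstration of Lemma~\ref{lem-E.R} based on the fact that the local charts do not occur in~\eqref{N.R}'', and you carry this out in the expected way. One small citation slip: the pointwise identity you need is not~\eqref{E.aa} (which lives in Section~\ref{sec-E} and involves the push-forward $\ka_*a_r$) but its Section~\ref{sec-N} analogue, the unnumbered display $\sfa_{z,\da}(x)=a_r(x)$ for $x\in\da(z+Q^m)$ stated just after~\eqref{N.a}.
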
 
Now we are ready to prove the main result of this section. Observe that 
\hb{V\times J\wheq\BH\times E}. 
\begin{thm(A)}\label{thm-N.A} 
Let \eqref{N.ass} be satisfied. There exists 
\hb{\eta_0=\eta_0(\bka)\geq1} such that 
\hb{(\pl_t+\cA_\eta,\ga)\in\cL(X^1,X^0\times Y)} and 
$$ 
\|(\pl_t+\cA_\eta,\ga)^{-1}\|_{\cL(X_\eta^0\times Y_{\coY\eta},X_\eta^1)} 
  \leq c(\bka) 
\qa \eta\geq\eta_0. 
$$ 
\end{thm(A)} 
\begin{proof} 
Theorem~\ref{thm-B.F}(i) guarantees 
\hb{\gF_q^{(s+r)/\vec r}\hr\gF_q^{(s+r-1)/\vec r}} and 
$$ 
\Vsdot_{(s+r+1)/\vec r,q;\eta} 
\leq c\kern1pt\eta^{-1}\,\Vsdot_{(s+r)/\vec r,q;\eta} 
\qa \eta>0, 
\npb 
$$ 
This implies 
\hb{\BX_\eta^1\hr\BW} and 
\hb{\Vsdot_{\BW_\eta}\leq c\kern1pt\eta^{-1}\,\Vsdot_{\BX_\eta^1}} for 
\hb{\eta>0}. 

\smallskip 
We write~$c_0$ for the constant~$c(\bka)$ of Lemma~\ref{lem-N.A}. 
Then we get 
$$ 
\|(\mfpl_t+\eta+\sA,\mfga)^{-1}\| 
_{\cL(\BX_\eta^0\times\BY_{\coY\eta},\BW_\eta)} 
\leq c_0/\eta 
\qa \eta>0. 
$$ 
Lemma~\ref{lem-N.AR} guarantees the existence of 
\hb{c_1\geq1} such that 
$$ 
\|\sB\|_{\cL(\BW_\eta,\BX_\eta^0)} 
+\|\sB'\|_{\cL(\BW_\eta,\BX_\eta^0)} 
\leq c_1 
\quad\text{$\eta$-uniformly}. 
$$ 
Hence, setting 
\hb{\eta_0:=2c_1c_0^2}, we find 
$$ 
\big\|\bigl(\sB\circ(\mfpl_t+\eta+\sA,\mfga)^{-1},0\bigr)\big\| 
_{\cL(\BX_\eta^0\times\BY_{\coY\eta})} 
\leq1/2c_0 
\qa \eta\geq\eta_0. 
$$ 
From this and Lemma~\ref{lem-N.P}  we obtain that 
\hb{(\mfpl_t+\eta+\sA+\sB,\mfga)} belongs to 
\hb{\Lis(\BX^1,\BX^0\times\BY)} and 
\begin{equation}\label{N.dAB} 
\|(\mfpl_t+\eta+\sA+\sB,\ga)^{-1}\| 
_{\cL(\BX_\eta^0\times\BY_{\coY\eta},\BX_\eta^1)} 
\leq2c_0 
\qa \eta\geq\eta_0. 
\npb 
\end{equation} 
The same argument shows that \eqref{N.dAB} holds with $\sB$ replaced 
by~$\sB'$. 

\smallskip 
It is obvious that 
$\ga\circ R=R\circ\mfga$ and 
$R^c\circ\nolinebreak\mfga=\ga\circ\nolinebreak R^c$. Using this and 
\eqref{N.AR}, 
the assertion thus follows from Lemma~\ref{lem-N.R} and (the analogue of) 
Corollary~\ref{cor-P.R}. 
\end{proof} 
Now we assume that $(s,q)$ is \hbox{$1$-admissible}. Going through the 
above proofs, neglecting any reference to 
\hb{t\in\BR^+}, using Theorem~\ref{thm-S.R} instead of 
Theorem~\ref{thm-C.C}, and appealing to Corollary~\ref{cor-E.R} 
instead of Corollary~\ref{cor-P.R}, etc., we obtain the following resolvent 
estimate. Details are left to the reader. 
\begin{thm(A)}\label{thm-N.E} 
Let assumption~\eqref{N.ass} be satisfied, but assume that $(s,q)$ is 
\hbox{$1$-admissible} . Then there exist 
\hb{\eta_0=\eta_0(\bka)\geq1} such that 
\hb{\lda+\cA_\eta} belongs to $\Lis(\gF_q^{s+r},\gF_q^s)$ and 
$$ 
(|\lda|+\eta)^{1-j}\,\|(\lda+\cA_\eta)^{-1}\| 
_{\cL(\gF_{q;\eta}^s,\gF_{q;\eta}^{s+j+r})} 
\leq c(\ka) 
\npb 
$$ 
for 
\hb{\Re\lda\geq0} and 
\hb{\eta>0}. 
\end{thm(A)} 
\section{Proof of the Main Theorems\label{sec-T}} 
\extraindent 
After all the preparation in the preceding sections it is no longer too 
difficult to demonstrate the validity of Theorems \ref{thm-I.P} 
and~\ref{thm-I.S}. 
\begin{proofTheoremI.P} 
\extraroman 
First we observe that the assumptions on~$(s,q)$, where 
\hb{q:=\iy} in claim~(ii), amount to: 
$(s,q)$~is \hbox{$r$-admissible}. We fix~$\bka$ satisfying 
$$ 
\ve^{-1}+\sum_{j=0}^r\|a_j\|_{\bs/\vec r,\iy}\leq\bka. 
$$ 

\smallskip 
(1) 
Assume 
\hb{0<\bs<1} and $\cA$~is independent of 
\hb{t\in\BR^+}. Define~$a_\ka$ by \eqref{E.ak1}. It follows from \eqref{E.a}, 
\,\eqref{E.ke}, and Theorem~\ref{thm-N.A} that there are 
\hb{\eta_0=\eta_0(\bka)\geq1} and 
\hb{c_0=c_0(\bka)\geq1} such that 
$$ 
(\pl_t+\eta+a_\ka\btdot\pl^r,\ga_\ka) 
\in\Lis(X^1,X^0\times Y) 
$$ 
and 
$$ 
\|(\pl_t+\eta+a_\ka\btdot\pl^r,\ga_\ka)^{-1}\|_  
{\cL(X_\eta^0\times Y_{\coY\eta},X_\eta^1)} \leq c_0, 
$$ 
uniformly with respect to \hb{\eta\geq\eta_0} and 
\hb{\ka\in\gK}. From this, \eqref{E.A}, and Theorem~\ref{thm-B.I} we infer 
\hb{(\mfpl_t+\eta+A,\mfga)\in\Lis(\BX^1,\BX^0\times\BY)} and 
$$ 
\|(\mfpl_t+\eta+A,\mfga)^{-1}\|_  
{\cL(\BX_\eta^0\times\BY_{\coY\eta},\BX_\eta^1)} \leq c_0 
\qa \eta\geq\eta_0. 
$$ 
Using \eqref{E.BB} and the arguments of the proof of Theorem~\ref{thm-N.A} 
we see that we can find 
\hb{\eta\geq\eta_0\geq1} so that 
$$ 
(\mfpl_t+\eta+A+B,\mfga),(\mfpl_t+\eta+A+B',\mfga) 
\in\Lis(\BX^1,\BX^0\times\BY) 
\npb 
$$ 
and the inverses of these linear operators are bounded by~$c(\bka)$. 

\smallskip 
Set 
\hb{\cX^j:=\gF_q^{(s+jr)/\vec r}(V\times\BR^+)} for 
\hb{j=0,1}, and 
\hb{\cY:=\gF_q^{s+r(1-1/q)}(V)}. Then Corollary~\ref{cor-P.R} implies 
$$ 
(\pl_t+\cA_\eta,\ga)\in\Lis(\cX^1,\cX^0\times\cY) 
\qb \|(\pl_t+\cA_\eta,\ga)^{-1}\|_{\cL(\cX^0\times\cY,\cX^1)} 
\leq c(\tk).  
$$ 

\smallskip 
(2) 
Suppose 
\hb{0<\bs<1}. We write 
$\cX^j(S):=\pagebreak[0]\gF_q^{(s+jr)/\vec r} 
   \bigl(V\times\pagebreak[0][0,S]\bigr)$ for 
\hb{S>0}. Given 
\hb{\tau\in J=J_T}, we denote by 
\hb{\pl_t+\cA(\tau)} the autonomous operator whose coefficients 
are frozen at 
\hb{t=\tau}. Then $\cA(\tau)$~is \hbox{$\bs$-regular} and normally 
\hbox{$\ve$-elliptic}, uniformly with respect to 
\hb{\tau\in J}. Thus, by step~(1), 
\hb{\bigl(\pl_t+\cA_\eta(\tau),\ga\bigr)} belongs to 
$\Lis(\cX^1,\cX^0\times\cY)$ and 
\begin{equation}\label{T.k} 
\big\|\bigl(\pl_t+\cA_\eta(\tau),\ga\bigr)^{-1}\| 
_{\cL(\cX^0\times\cY,\cX^1)}\leq c(\bka) 
\qa 0\leq\tau\leq T. 
\end{equation} 
The fact that the coefficients of~$\cA$ belong to 
\hb{bc^{\bs/\vec r}(V\times J)} implies (similarly as in Section~\ref{sec-N}) 
\begin{equation}\label{T.0} 
\|\cA(\tau+\cdot)-\cA(\tau)\|_{\cL(\cX^1(S),\cX^0(S))}\ra0 
\quad\text{as }S\ra0, 
\end{equation} 
uniformly with respect to 
\hb{\tau\in J}. Since 
$$ 
(\pl_t+\cA_\eta,\ga) 
=\bigl(\pl_t+\cA_\eta(\tau),\ga\bigr) 
+\bigl(\cA(\tau+\cdot)-\cA(\tau),0\bigr) 
\text{ on }M\times[\tau,T], 
$$ 
we infer from \eqref{T.k}, \,\eqref{T.0}, and Lemma~\ref{lem-N.P} that there 
exist 
\hb{S\in(0,T)} and 
\hb{k\in\thBN} such that 
$$ 
\bigl(\pl_t+\cA_\eta(jS+\cdot)\bigr)v=f(jS+\cdot) 
\qb \ga v=w 
$$ 
has for each 
\hb{w\in\cY} a~unique solution 
\hb{V_{\coV j}(w)\in\cX^1(S)} if 
\hb{0\leq j\leq k-1}, and a~unique solution 
\hb{V_k(w)\in\cX^1\bigl(\min\{S,\,T-kS\}\bigr)}. We set 
\hb{v_0:=V_0(u_0)} and 
\hb{v_i:=V_{\coV i}\bigl(v_{i-1}(S)\bigr)} for 
\hb{1\leq i\leq k}. For 
\hb{t=iS+s} we define~$u$ by 
\hb{u(t):=v_i(s)}, where 
\hb{0\leq i<k} and 
\hb{0\leq s\leq\min\{S,\,T-iS\}}. The trace theorem shows that 
$u$~belongs to 
\hb{\gF_q^{(s+r)/\vec r}(V\times J)} and is the unique solution of 
\hb{(\pl_t+\cA_\eta)u=f} on 
\hb{V\times J} satisfying 
\hb{\ga u=u_0}. 

\smallskip 
(3) 
Let 
\hb{0<\bs<1}. Set 
\hb{f^\eta:=e^{t\eta}f}. Then 
\hb{u\in\cX^1(T)} satisfies 
\hb{(\pl_t+\cA)u=f^\eta} and 
\hb{\ga u=u_0} iff 
\hb{u=e^{t\eta}v} and 
\hb{v\in\cX^1(T)} conforms to 
\hb{(\pl_t+\cA_\eta)v=f} and 
\hb{\ga v=u_0}. Since 
\hb{f\mt f^\eta} is an automorphism of~$\cX^0(T)$, we see from the preceding 
step that the theorem holds under the present additional hypothesis. 

\smallskip 
(4) 
We put 
\hb{\cY^s(V):=\gF_q^{s/\vec r}(V\times J)} and 
\hb{\cZ^s(V):=\gF_q^{s+r(1-1/q)}(V)}. Suppose 
\begin{equation}\label{T.sr} 
r\leq s<\bs<r+1 
\end{equation} 
and set 
\hb{s_0:=s-r}. Let $(f,u_0)$ belong to 
\hb{\cY^s(V)\times\cZ^s(V)}. Since 
$$ 
\cY^s(V)\times\cZ^s(V)\hr\cY^{s_0}(V)\times\cZ^{s_0}(V), 
$$ 
it follows from what we have already shown that there exists a unique 
\newline 
\hb{u\in\cY^{s_0+r}(V)=\cY^s(V)} satisfying 
\hb{(\pl_t+\cA)u=f} on 
\hb{V\times J} and 
\hb{\ga u=u_0}. Let 
\hb{1\leq i\leq r}. By applying~$\na^i$ we get 
$$ 
(\pl_t+\cA)\na^iu=\cA_iu+\na^if 
\text{ on }V_{\tau+i}^\sa\times J 
\qb \ga\na^iu=\na^iu_0, 
$$ 
where 
$$ 
\cA_iu:= 
-\sum_{\vph{k}j=0}^r\sum_{k=1}^i\bid ik\na^ka_j\btdot\na^{i-k}u. 
$$ 
Note 
$$ 
\na^{i-k}u\in\cY^{s-i+k}(V_{\coV\tau+i-k}^\sa) 
\hr\cY^{s_0}(V_{\coV\tau+i-k}^\sa) 
$$ 
and 
$$ 
\na^ka_j 
\in bc^{(\bs-k)/\vec r} 
\bigl(V_{\coV\tau+\sa+k}^{\sa+\tau+j}(\cL(F))\bigr) 
\hr bc^{(\bs-r)/\vec r} 
\bigl(V_{\coV\tau+\sa+k}^{\sa+\tau+j}(\cL(F))\bigr). 
$$ 
From this it follows, due to 
\hb{\bs-k\geq s_0} with 
\hb{\bs-k>s_0} if 
\hb{q<\iy}, that 
$$ 
\cA_ju+\na^if\in\cY^{s_0}(V_{\coV\tau+i}^\sa) 
\qb \na^iu_0\in\cZ^{s_0}(V_{\coV\tau+i}^\sa). 
$$ 
Hence the results of the preceding step guarantee that 
\begin{equation}\label{T.iu} 
\na^iu=(\pl_t+\cA,\ga)^{-1}(\cA_iu+\na^if,\na^iu_0) 
\in\cY^s(V_{\coV\tau+i}^\sa) 
\qa 1\leq i\leq r. 
\end{equation} 
Analogously, 
\begin{equation}\label{T.dtA} 
(\pl_t+\cA)\pl_tu=\cA_0u+\pl_tf 
\text{ on }V\times J 
\qb \ga\pl_tu=-\cA(0)u_0+f(0), 
\end{equation} 
where 
\begin{equation}\label{T.A0} 
\cA_0u:= 
-\sum_{j=0}^r\pl_ta_j\btdot\na^ju\in\cY^{s_0}(V), 
\end{equation} 
due to 
\hb{\pl_ta_j\in bc^{(\bs-r)/\vec r} 
   \bigl(V_{\coV\tau+\sa}^{\sa+\tau+j}(\cL(F))\bigr)}. It also follows from 
the trace theorem that 
$$ 
-\cA(0)u_0+f(0)\in\cZ^{s_0}(V). 
$$ 
Now we infer from \hbox{\eqref{T.dtA}, \,\eqref{T.A0}}, 
and the results of step~(3) that 
\begin{equation}\label{T.dt} 
\pl_tu= 
(\pl_t+\cA,\ga)^{-1} 
\bigl(\cA_0u+\pl_tf,-\cA(0)u_0+f(0)\bigr) 
\in\cY^s(V). 
\end{equation} 
It follows from\eqref{T.iu}, \,\eqref{T.dt}, 
and \eqref{I.WW}, \,\eqref{I.bcb} that 
\hb{u\in\cY^{s+r}(V)}. It is not difficult to check that the map 
\hb{(f,u^0)\mt u} is continuous from 
\hb{\cY^s(V)\times\cZ^s} onto~$\cY^{s+r}(V)$. 
This proves the theorem if \eqref{T.sr} is satisfied. 

\smallskip 
(5) 
Assume 
\hb{r<\bs<r+1} and 
\hb{1<s<r} with 
\hb{s\notin\BN}. Choose 
\hb{s_0\in(0,1)} and 
\hb{s_1\in(r,\bs)}. Then it follows from steps (3) and~(4) that 
\begin{equation}\label{T.sj} 
\kern-5pt 
(\pl_t+\cA,\ga)\in\Lis\bigl(\gF_q^{(s_j+r)/\vec r}(V\times J), 
 \gF_q^{s_j/\vec r}(V\times J)\times\gF_q^{s_j+r(1-1/q)}(V)\bigr) 
\end{equation} 
for 
\hb{j=0,1}. In \cite{Ama17a} it is shown that, setting 
\hb{\ta:=(s-s_0)/(s_1-s_0)}, 
$$ 
\bigl(\gF_q^{(s_0+r)/\vec r}(V\times J), 
\gF_q^{(s_1+r)/\vec r}(V\times J)\bigr)_{\ta,q}^0 
\doteq\gF_q^{(s+r)/\vec r}(V\times J) 
$$ 
and 
$$ 
\bigl(\gF_q^{s_0+r(1-1/q)}(V), 
\gF_q^{s_1+r(1-1/q)}(V)\bigr)_{\ta,q}^0 
\doteq\gF_q^{s+r(1-1/)}(V). 
$$ 
Thus we get the assertion in the present case from\eqref{T.sj} by 
interpolation. This proves the claim for 
\hb{0\leq s\leq\bs} with 
\hb{r<\bs<r+1}, provided $(s,q)$ is \hbox{$r$-admissible}. 
The general case follows now by induction.\nolinebreak\hfill\nolinebreak\qed
\end{proofTheoremI.P} 
\begin{proofTheoremI.S} 
\extraroman 
We modify the preceding proof by omitting~$t$ and all considerations with 
reference to it and invoke Theorem~\ref{thm-N.E} instead of~\ref{thm-N.A}. 
As for the analogue to step~(4), we use the fact that 
\hb{u\in\gF_q^{s+1}(V)} iff 
\hb{u\in\gF_q^s(V)} and 
\hb{\na u\in\gF_q^s(V_{\coV\tau+1}^\sa)}. Hence interpolation is not needed 
here. Then we get the existence of 
\hb{\eta\geq1} such that 
\hb{\lda+\cA\in\Lis\bigl(\gF_q^{s+r}(V),\gF_q^s(V)\bigr)} and 
$$ 
\|(\lda+\cA)^{-1}\| 
_{\cL(\gF_q^s(V),\gF_q^{s+jr}(V))}\leq c\big/(1+|\lda|)^{1-j} 
$$ 
for 
\hb{\Re\lda\geq\eta} and 
\hb{j=0,1}. This proves the claim due to the density of 
$\gF_q^{s+r}(V)$ in~$\gF_q^s(V)$.\nolinebreak\hfill\nolinebreak\qed
\end{proofTheoremI.S}

\def\cprime{$'$} \def\polhk#1{\setbox0=\hbox{#1}{\ooalign{\hidewidth
  \lower1.5ex\hbox{`}\hidewidth\crcr\unhbox0}}}

\let\vec\oldvec
\end{document}